\documentclass[11pt]{amsart}

\usepackage[dvipsnames]{xcolor}
\usepackage{enumitem}
\usepackage{amssymb,graphicx,mathtools}
\usepackage{float}
\usepackage{tikz}
\usetikzlibrary{calc,arrows.meta}
\usepackage[margin=1in]{geometry}
\usepackage{etoolbox}

\usepackage[square,longnamesfirst]{natbib}

\theoremstyle{plain}
\newtheorem{theorem}{Theorem}[section]
\newtheorem{proposition}[theorem]{Proposition}

\newtheorem{lemma}[theorem]{Lemma}
\theoremstyle{definition}
\newtheorem{problem}{Problem}

\usepackage[
citecolor=PineGreen,
colorlinks=true,
linkcolor=RoyalBlue,
urlcolor=BrickRed
]{hyperref}

\newcommand{\Z}{\mathbb Z}
\newcommand{\R}{\mathbb R}
\renewcommand{\P}{\mathbb P}
\newcommand{\E}{\mathbb E}
\newcommand{\defn}[1]{\textbf{#1}}

\tikzset{
	ledge/.style={draw=black!70,line width=0.5pt},
	lvert/.style={circle,fill=black,inner sep=1.1pt},
	dedge/.style={draw=black!35,line width=0.5pt,dashed},
	thickedge/.style={draw=black,line width=1.5pt},
	walkarrow/.style={-{Stealth[length=2.2mm]},line width=0.9pt}
}

\makeatletter
\patchcmd{\@settitle}{\uppercasenonmath\@title}{\Large}{}{}
\patchcmd{\@setauthors}{\MakeUppercase}{\large}{}{}
\makeatother

\title{Eulerian walkers on $\Z^2$ have range exponent $2/3$}
\author{Ahmed Bou-Rabee}
\author{Yuval Peres}

\begin{document}
	
	\begin{abstract}
		In the Eulerian walker model (also known as rotor walk), each site of the
		square lattice begins with an arrow pointing to one of its four neighbors.  A
		walker that starts at the origin repeatedly turns the arrow at its current
		site clockwise by $90^\circ$ and steps in the new direction.
		Priezzhev, Dhar, Dhar, and Krishnamurthy
		\citeyearpar{PriezzhevDharDharKrishnamurthy1996} introduced this as a model of
		self-organized criticality and conjectured that, for independent uniform
		initial directions, the region explored in the first $t$ steps has radius of
		order $t^{1/3}$.  We establish this conjecture and further show that the walker visits every lattice site
		infinitely often, and that the region it has visited by time $t$, rescaled by
		$t^{1/3}$, converges to a convex body.
	\end{abstract}
	
	\maketitle

	\begin{figure}[!b]
		\centering
		\begin{tikzpicture}[scale=1.25,
			rot/.style={-{Stealth[length=3.2mm]},line width=1.7pt},
			ghost/.style={-{Stealth[length=2.2mm]},line width=0.7pt,draw=black!30}]
			
			\foreach \s in {0,1,2} {
				\begin{scope}[xshift={\s*4.3cm}]
					\foreach \x in {-1,0,1} \draw[ledge] (\x,-1.45) -- (\x,1.45);
					\foreach \y in {-1,0,1} \draw[ledge] (-1.45,\y) -- (1.45,\y);
					\foreach \p in {(-1,-1),(0,-1),(1,-1),(-1,0),(-1,1),(0,1),(1,1)}
					\node[lvert] at \p {};
				\end{scope}
			}

			\begin{scope}
				\draw[ghost] (0,0) -- (0.72,0);
				\draw[ghost] (0,0) -- (0,-0.72);
				\draw[ghost] (0,0) -- (-0.72,0);
				\node at (0.24,0.60) {\scriptsize$N$};
				\node at (0.60,0.24) {\scriptsize$E$};
				\node at (-0.24,-0.60) {\scriptsize$S$};
				\node at (-0.60,-0.24) {\scriptsize$W$};
				\draw[rot] (0,0) -- (0,0.78);
				\node[lvert] at (1,0) {};
				\node[circle,fill=RoyalBlue,draw=white,line width=0.7pt,inner sep=2.6pt]
				at (0,0) {};
			\end{scope}
			
			\begin{scope}[xshift=4.3cm]
				\draw[ghost] (0,0) -- (0,0.72);
				\draw[ghost] (0,0) -- (0,-0.72);
				\draw[ghost] (0,0) -- (-0.72,0);
				\draw[-{Stealth[length=2.4mm]},line width=0.9pt,draw=black!65]
				(0.30,0.62) arc (64:26:0.70);
				\draw[rot] (0,0) -- (0.78,0);
				\node[lvert] at (1,0) {};
				\node[circle,fill=RoyalBlue,draw=white,line width=0.7pt,inner sep=2.6pt]
				at (0,0) {};
			\end{scope}
			
			\begin{scope}[xshift=8.6cm]
				\draw[ghost] (0,0) -- (0,0.72);
				\draw[ghost] (0,0) -- (0,-0.72);
				\draw[ghost] (0,0) -- (-0.72,0);
				\draw[rot] (0,0) -- (0.78,0);
				\draw[-{Stealth[length=2.4mm]},line width=0.9pt,draw=black!65]
				(0.12,0.34) .. controls (0.5,0.62) and (0.7,0.5) .. (0.92,0.20);
				\node[lvert] at (0,0) {};
				\node[circle,fill=RoyalBlue,draw=white,line width=0.7pt,inner sep=2.6pt]
				at (1,0) {};
			\end{scope}
			
			\node at (2.15,0) {$\longrightarrow$};
			\node at (6.45,0) {$\longrightarrow$};
		\end{tikzpicture}
		\caption{One step on the square lattice.  The rotor at the walker turns
			from north to east, and the walker then follows it.}
		\label{fig:mechanism}
	\end{figure}

	\section{Introduction}\label{sec:introduction}
	
	An \defn{Eulerian walker}, also called a \defn{rotor walk}, is a
	derandomized version of random walk on a locally finite graph.  Each vertex
	carries a rotor pointing toward one of its neighbors.  The walker starts at a
	vertex, and at each step the rotor at the walker's position advances in a
	fixed cyclic order and the walker follows it
	(Figure~\ref{fig:mechanism}).  The walk is deterministic once the initial
	rotors are fixed.
	
	The model was introduced by \citet*{PriezzhevDharDharKrishnamurthy1996} in
	connection with self-organized criticality.  For independent uniform rotors
	on the square lattice, they  predicted that the region visited by the Eulerian walker in $t$ steps
	has radius of order $t^{1/3}$.  \citet{KapriDhar2009} later performed
	simulations which led them to conjecture this region has a limit shape.  We prove these
	conjectures and further show that the walker is recurrent: it visits every
	lattice site infinitely often.  We also establish these results on every
	doubly periodic graph of maximum degree three.
	Figure~\ref{fig:limit-shapes} shows Eulerian walkers on the square,
	honeycomb, and square--octagon lattices.

	\begin{figure}[!t]
		\centering
		\includegraphics[width=0.76\textwidth]{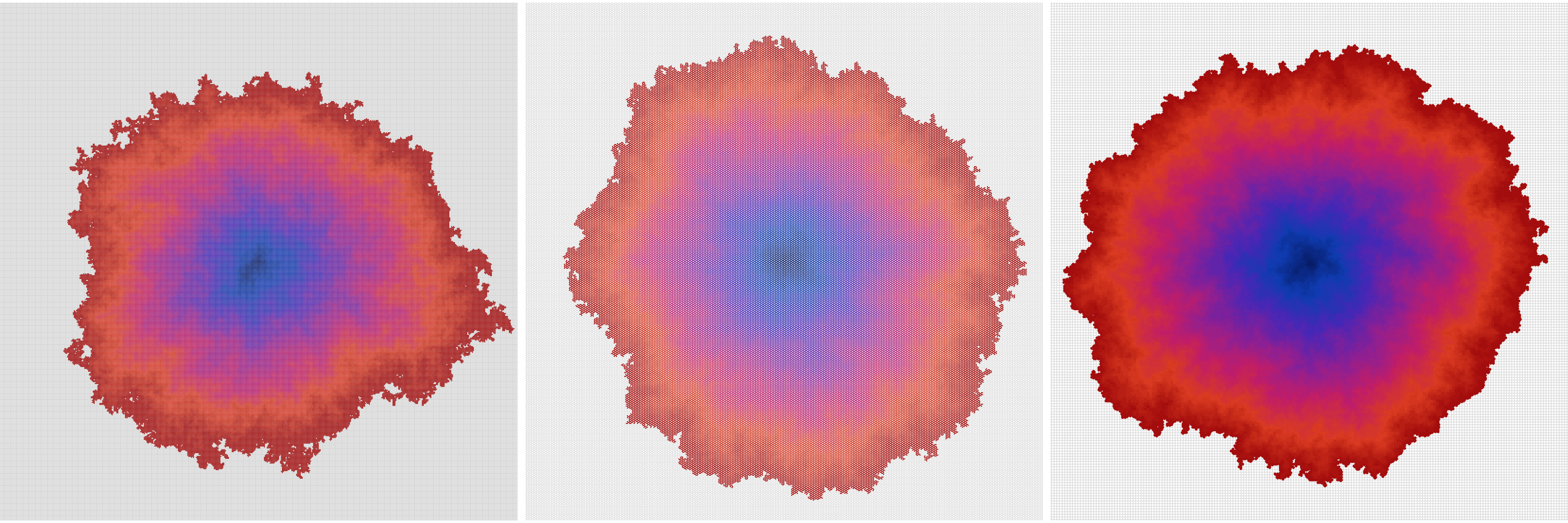}\\[4pt]
		\includegraphics[width=0.76\textwidth]{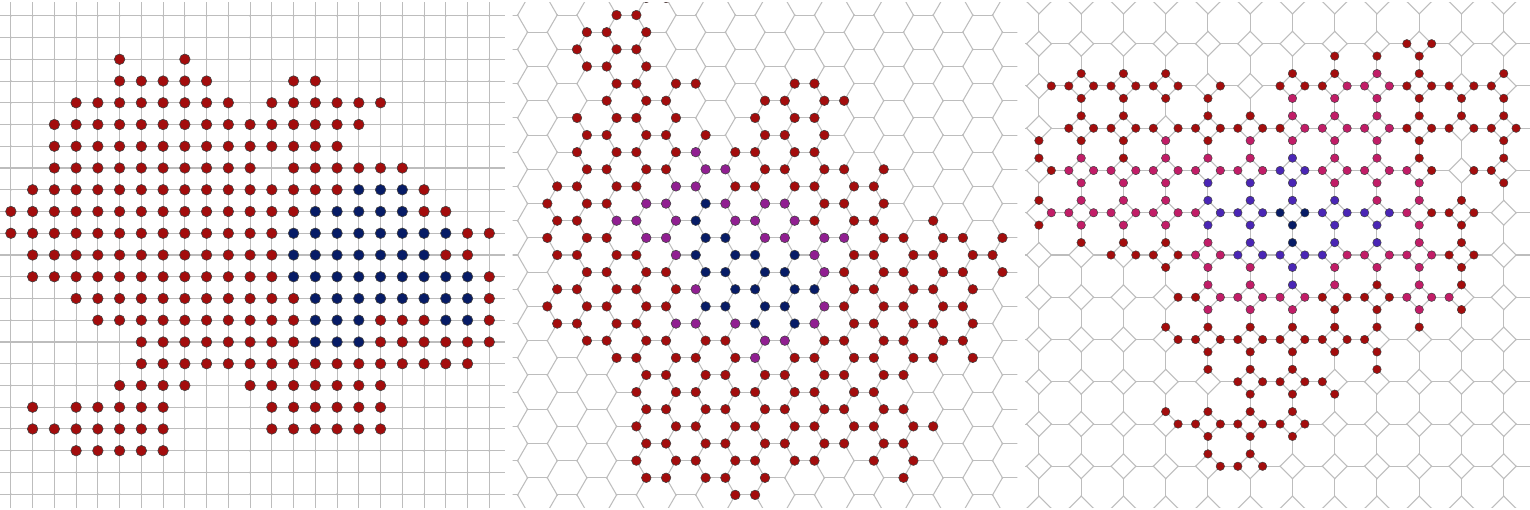}
		\caption{The sites visited by Eulerian walkers with independent uniform
			initial rotors and clockwise rotor mechanisms on the square, honeycomb, and
			square--octagon lattices, from left to right.  The color records the circuit
			in which each site was first visited, from blue to red.  The top row shows
			the ranges at time $4\times10^6$, and the bottom row shows the ranges at
			time $1200$.}
		\label{fig:limit-shapes}
	\end{figure}
	
	\subsection{Main results}\label{ssec:main-results}
	
	In this paper we consider only undirected graphs with no loops and no
	multiple edges.  Let $G=(V,E)$ be a connected, locally finite graph with
	vertex set $V$ and edge set $E$, and write $\deg(v)$ for the number of
	neighbors of $v$.  Each edge $\{x,y\} \in E$ gives two
	\defn{directed edges}, $x\to y$ and $y\to x$.  A
	\defn{rotor configuration} assigns to each vertex $v$ one outgoing directed
	edge $\rho(v)$.  A \defn{rotor mechanism} specifies a cyclic permutation
	$\pi_v$ of the directed edges out of each vertex $v$.  Given an initial position $X_0$ and an
	initial rotor configuration $\rho_0$, define, for every integer $t\geq0$,
	\[
	\rho_{t+1}(v)
	\coloneqq
	\begin{cases}
		\pi_v(\rho_t(v)),&v=X_t,\\
		\rho_t(v),&v\ne X_t
	\end{cases}\, ,
	\]
	and let $X_{t+1}$ be determined by
	\[
	\rho_{t+1}(X_t)=(X_t\to X_{t+1})\, .
	\]
	On the square lattice, the clockwise rotor mechanism has cyclic order
	$N,E,S,W$.
	
	The \defn{range} at time $t$ is
	\[
	R_t\coloneqq\{X_0,\ldots,X_t\}\, .
	\]
	The walk is \defn{recurrent} if it visits every vertex infinitely often and
	\defn{transient} otherwise.  Fix a
	starting vertex $o$.  A \defn{circuit} consists of $\deg(o)$ successive
	returns to $o$.  Define the time at which the first $n$ circuits have been
	completed by
	\begin{equation}\label{eq:circ-time}
		T(n)\coloneqq
		\inf\left\{t\geq0:X_t=o\ \text{and}\ \#\{0\leq s<t:X_s=o\}
		\geq n\deg(o)\right\}\, .
	\end{equation}
	When $T(n)<\infty$, let $A_n\coloneqq R_{T(n)}$, the range after $n$
	circuits, and let $A_0\coloneqq\{o\}$.
	
	A \defn{doubly periodic graph} in $\R^2$ is a connected, locally finite
	graph $G=(V,E)$ with vertex set~$V\!\subset\R^2$, on which a rank-two lattice $\Lambda\subset\R^2$
	acts   by translation. We  assume these translations are   automorphisms of $G$ with finitely many  orbits in $V$.  
	A rotor mechanism on $G$ is \defn{doubly periodic} if its cyclic orders are
	invariant under $\Lambda$.
	
	\begin{theorem}\label{thm:main}
		Let $G$ be either the square lattice with its clockwise rotor mechanism, or
		a doubly periodic graph in $\R^2$ of maximum degree 3, with a doubly periodic rotor
		mechanism.  Suppose that the initial rotors are independent and uniform among
		the directed edges out of each vertex, and start the rotor walk at a fixed vertex
		$o$.  The following statements hold almost surely.
		
		\begin{enumerate}[label=\textup{(\roman*)}]
			\item The walk is recurrent.
			
			\item There exist a deterministic compact convex set
			$B\subset\R^2$, containing the origin in its interior, and a
			deterministic constant $\kappa>0$ such that, in Hausdorff distance,
			\[
			n^{-1}A_n\longrightarrow B
			\qquad\text{and}\qquad
			t^{-1/3}R_t\longrightarrow\kappa B\, .
			\]
			
			\item The limit $\lim_{t\to\infty}|R_t|t^{-2/3}$ exists, is deterministic,
			and is positive and finite.
		\end{enumerate}
	\end{theorem}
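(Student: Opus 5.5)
Throughout, $\rho$ denotes the initial rotor configuration and $o$ the fixed starting vertex.

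The plan is to reduce the dynamics to a growth process for the sets $A_n$ and then identify that process with a first-passage-percolation-type model whose randomness is the initial rotor field. The basic structural fact I would use is the standard circuit lemma for rotor walks. Suppose that during the $n$-th circuit the walk returns to $o$. Then in that circuit every vertex of $A_{n-1}$ is exited exactly $\deg$ times, and every edge inside $A_{n-1}$ is traversed exactly once in each direction. This follows from Eulerian-tour properties: after one full circuit the rotors on the visited set form a spanning tree oriented toward the last exit to $o$.

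Consequently $A_n$ equals $A_{n-1}$ together with the vertices adjacent to $A_{n-1}$ that the walk steps onto during circuit $n$. Each boundary vertex $y$ of $A_{n-1}$ is entered from each of its neighbors in $A_{n-1}$. Upon first entry, $y$ sends the walker along its own initial rotor orientation, with the mechanism applied. The walker either comes straight back, or wanders into new territory and eventually returns, since the graph outside is explored and must come back by finiteness of the circuit.

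The key steps are as follows.

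\textbf{Step 1.} Prove that every circuit is finite, and hence that (i) holds. A rotor walk on an infinite graph either returns to $o$ or escapes to infinity visiting fresh vertices. I would show that escape requires an infinite path of fresh vertices whose initial rotors are all "aligned" in a specific way. For uniform independent rotors, with degree $3$ or the clockwise square lattice, such infinite aligned paths are subcritical. On the square lattice this is a planar duality argument: escaping paths correspond to clockwise-consistent chains, which are dominated by a subcritical percolation. Completing all circuits visits the whole growing neighborhood, so every site is visited infinitely often.

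\textbf{Step 2.} Describe $A_n$ locally. I would show that $x\in A_n$ if and only if a "passage time" $\tau(o,x)\le n$. Here $\tau$ is a first-passage metric in which crossing into a fresh region costs one circuit per "layer," determined by the finite clusters from Step 1. The goal is to show that $\tau$ is subadditive, stationary under $\Lambda$, and has exponentially integrable local increments, again from the subcritical cluster bounds.

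\textbf{Step 3.} Apply Kingman's subadditive ergodic theorem together with a Cox–Durrett shape theorem to get $n^{-1}A_n\to B$. The limit $B$ is compact and convex, and contains $0$ in its interior because $\tau(o,x)\ge c|x|$ and $\tau(o,x)\le C|x|$ with high probability.

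\textbf{Step 4.} Convert to time. By the circuit lemma, $T(n)-T(n-1)=2|E(A_{n-1})|+O(\text{boundary excursions})$. Thus $T(n)\sim 2\bar d\,\mathrm{area}(B)\,n^3/2\cdot(\text{density})$, giving $T(n)\sim c n^3$ with a deterministic $c$. Inverting yields $t^{-1/3}R_t\to\kappa B$ with $\kappa=c^{-1/3}$. Monotonicity of $R_t$ between $T(n)$ and $T(n+1)$ handles intermediate times, since $A_{n+1}$ and $A_n$ differ by $o(n)$ in Hausdorff distance. Finally, (iii) follows with limit $\kappa^2\,\mathrm{area}(B)$ times the vertex density of $G$. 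This requires area convergence, which follows from Hausdorff convergence to a convex body with nonempty interior.

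The main obstacle is Step 1/Step 2: getting uniform exponential tail bounds on the extra exploration within one circuit. These bounds must hold uniformly over the history-dependent boundary of $A_{n-1}$, which is not independent of the rotors outside. I would first try to show that the rotors strictly outside $A_{n-1}$ remain i.i.d. uniform conditionally on $A_{n-1}$. The justification is that their values were never read, so $A_{n-1}$ is a stopping set. With that in hand, I would bound the excursions by subcritical cluster sizes via a union bound over boundary vertices.
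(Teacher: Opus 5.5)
There is a genuine gap: your argument does not establish the linear lower bound $\tau(o,x)\geq c|x|$, that is, that the radius of $A_n$ is $O(n)$. This bound is what makes $B$ compact, and it is the hard direction. The other direction is free: $\tau\leq d_G$ holds because each circuit adds the entire outer vertex boundary of the current range, so no cluster bounds are needed there.

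\textbf{The linear lower bound.} Your mechanism for the hard direction works circuit by circuit. You observe that the rotors outside $A_{n-1}$ are still fresh, which is correct. You then bound each excursion by a live-path length and take a union bound over boundary edges. But $A_{n-1}$ has at least order $n$ boundary edges, so the union bound gives an extra $O(\log n)$ per circuit. Summed over circuits, this only yields $\max_{x\in A_n}|x|=O(n\log n)$, which does not give $\mu>0$.

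The missing idea is to argue about a single path across all circuits, as in Kesten-type lower bounds for first-passage percolation. The live segments produced by successive boundary routings concatenate into one path from $o$ to $x$. This path is live at all but at most $n-1$ internal vertices, namely the junctions between segments. Hence $\tau(o,x)\leq\eta\, d_G(o,x)$ forces a path reaching distance $d_G(o,x)$ with at most $\eta\, d_G(o,x)$ live-failures. You then need such almost-live paths to be exponentially unlikely, uniformly in the first edge. The paper gets this from a single-scale block crossing bound: it marks vertices independently with small probability $s$, so marking all failures costs a factor $s^{\eta R}$, and it dominates the resulting $2$-dependent block field by subcritical Bernoulli variables.

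\textbf{Subadditivity of $\tau$.} This is also not automatic, because the walks from $x$ and from $y$ see different rotor histories. It comes from two facts, both consequences of the abelian property:
\begin{itemize}
\item $A_n=\Phi^n(\{o\})$, where $\Phi$ is a circuit map that depends only on the current set and not on the rotors inside it;
\item $\Phi$ is monotone: $S\subseteq T$ implies $\Phi(S)\subseteq\Phi(T)$.
\end{itemize}
Your proposal never supplies these.

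\textbf{The ``subcritical'' claims in Step 1.} These are not correct as stated, and they are where most of the work lies.

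On a graph of maximum degree three, the number of live continuations at a degree-three vertex is uniform on $\{0,1,2\}$, with mean exactly $1$. So the live-path exploration is dominated by a \emph{critical} Galton--Watson process, not a subcritical one. That gives recurrence but not exponential decay. The binary tree shows that criticality alone cannot give the $t^{1/3}$ shape: there the walk is recurrent yet its range grows linearly. Exponential decay needs periodicity. Periodicity guarantees, within bounded distance of every edge, a vertex of degree at most two or a short cycle, which gives a uniformly positive chance of ending the live path.

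On the square lattice, the dual edges along a live path are each open with probability exactly $1/2$, and the two open sides at a vertex are dependent. So they cannot be dominated by subcritical Bernoulli percolation, nor even compared directly with critical percolation, which in any case gives no exponential decay. The paper needs three further ingredients:
\begin{itemize}
\item open dual paths never use three consecutive sides of a square;
\item an enhancement argument showing that critical percolation avoiding the resulting five-edge pattern is subcritical;
\item a depth-first exploration in which the dependent (``forced'') tests are exponentially rare.
\end{itemize}
Without these, your Step 1 and the tail bounds in Steps 2--3 remain assertions rather than arguments.
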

	
	In the degree-three case, only conclusions~\textup{(ii)}
	and~\textup{(iii)} of Theorem~\ref{thm:main} require periodicity.  Recurrence holds for independent uniform rotors on every
	infinite connected graph of maximum degree three
	(Proposition~\ref{prop:subcubic-recurrence}).
	
	For comparison, the expected number of sites visited by simple random walk
	on $\Z^2$ by time $t$ has order $t/\log t$ \citep{DvoretzkyErdos1951},
	whereas the rotor walk in Theorem~\ref{thm:main} visits order $t^{2/3}$.  The analogy with random walk might lead one to expect that biased initial rotors yield a transient Eulerian walk. Perhaps surprisingly, this only holds for large biases; the next proposition  shows that recurrence and the scaling of the range are stable with respect to perturbations of the uniform law.

	\begin{proposition}\label{prop:small-perturbations}
		Fix a graph and rotor mechanism satisfying the hypotheses of
		Theorem~\ref{thm:main}.  There is $\delta>0$ with the following property.
		Suppose that the initial rotors are independent and that the law at every
		vertex has total variation distance less than $\delta$ from the uniform law.
		Then conclusion~\textup{(i)} of Theorem~\ref{thm:main} holds, and if these
		laws are invariant under the translation lattice, then
		conclusions~\textup{(ii)} and~\textup{(iii)} hold as well.
	\end{proposition}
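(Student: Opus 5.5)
The plan is to treat Proposition~\ref{prop:small-perturbations} as a stability statement for the proof of Theorem~\ref{thm:main}. That proof rests on a percolation-type input about the initial rotor configuration, so we identify that input and show it survives a small change in the law of the rotors. The natural input, familiar from rotor walk theory, is this. The circuits of the walk grow the range $A_n$, and $A_{n+1}\setminus A_n$ is governed by the structure of the initial rotors outside $A_n$. Concretely, $A_{n+1}$ is $A_n$ together with the vertices whose initial rotor path (the path obtained by following $\rho_0$) leads into $\partial A_n$. Recurrence then reduces to two facts: each circuit finishes in finite time, and every vertex is eventually absorbed. Both follow once the initial rotor configuration almost surely has no infinite forward rotor path that avoids cycles, and the backward trees ("rotor cycles/trees") are finite with exponential tails. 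For uniform rotors on $\Z^2$ with the clockwise mechanism, and on subcubic graphs, we expect these are exactly the estimates proved in the body of the paper, via a comparison with a subcritical branching or percolation structure (this is Proposition~\ref{prop:subcubic-recurrence} in the degree-three case).

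First I would isolate the finite-range events on which the uniform-law proofs rely. For example, the existence of a closed rotor cycle or a blocking configuration inside a box of fixed size $L$ could be such an event. The uniform-law proof should show that these events occur with probability at least $1-\epsilon(L)$, and that they drive a subcritical renormalized percolation with $\epsilon(L)\to0$. Each such event depends on at most $|V\cap \text{box}_L|$ independent rotors. Under the coupling of each vertex law with the uniform law, which has failure probability below $\delta$ per vertex, the probability of any such event changes by at most $C L^2\delta$. So I would fix $L$ so that the uniform margin is at least twice the renormalization threshold, and then choose $\delta$ with $CL^2\delta$ below half that margin. Because the rotors are independent across vertices, the renormalized model remains a finitely dependent field above the threshold. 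This gives the same exponential tails on trees and paths, and hence conclusion (i) by the argument used for Theorem~\ref{thm:main}(i).

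For (ii) and (iii) we add translation invariance under $\Lambda$. I would then rerun the shape argument. The natural route is a subadditivity, or first-passage-type, description in which the number of circuits needed to reach a far point $x$ is approximately a norm $\|x\|_B$. The ergodic input needed is stationarity and ergodicity of the rotor field under $\Lambda$, which holds for an i.i.d.-in-orbit product measure. The error control needed is exactly the exponential tail bounds from the previous paragraph. The time scaling then gives $T(n)\sim c n^3$, since circuit $n$ costs about $|A_n|\asymp n^2$ visits summed over $n$ circuits. This yields $t^{-1/3}R_t\to\kappa B$ and the limit of $|R_t|t^{-2/3}=\mathrm{area}(\kappa B)$ by the same deduction as before.

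The main obstacle is the first step: verifying that every probabilistic estimate in the proof of Theorem~\ref{thm:main} uses the uniform law only through strict inequalities on local events, as opposed to exact identities. If some step uses an exact symmetry of the uniform law, it would have to be replaced. An example would be an exact martingale, or the mean-zero drift of the rotor increments that gives a balance. For such a step I would first try to recast it as a strict inequality with slack, for instance subcriticality of a branching number strictly below $1$, which is stable under perturbation. Only then would I choose $\delta$ small enough to preserve that slack uniformly over vertices.
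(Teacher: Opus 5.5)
Your perturbation mechanism is the right one, and it is what Lemma~\ref{lem:block-live-paths} does: a local block event, a per-vertex coupling with the uniform law that fails with probability below $\delta$, a union bound of order $|Q_0^+|\delta$, domination of the resulting finitely dependent field by a product measure, and then Proposition~\ref{prop:path-reduction}. The gap is in what you feed into it.

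Your description of circuit growth is false. $A_{n+1}=\Phi(A_n)$ is the set reached by the boundary routing of $A_n$. Every vertex of $\partial A_n$ is actuated and sends a particle to the neighbor following its initial rotor. That neighbor therefore lies in $A_{n+1}$ even when its own initial rotor path closes into a cycle disjoint from $\partial A_n$.

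Consequently your criterion (no infinite forward rotor path, backward trees with exponential tails) does not imply recurrence. Take the rooted tree whose non-root vertices have degree four, with uniform rotors. Each step of a forward rotor path closes a $2$-cycle with probability at least $1/4$, and backward trees are subcritical branching processes with mean offspring $3/4$. Yet live paths percolate there, and the walk is transient by Angel and Holroyd. The object that governs the walk is the live path. Lemma~\ref{lem:decreasing-positions}\textup{(i)} turns every visit of a boundary routing into a live path, and Proposition~\ref{prop:live-recurrence} gives recurrence when there is no infinite live path.

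For (ii) and (iii) you need more than this. The crux of the shape theorem is $\mu>0$, a linear lower bound on the circuit passage time. By Lemma~\ref{lem:decreasing-positions}\textup{(ii)}, fast passage only yields a path whose live condition fails at up to $\eta R$ internal vertices. So the estimate that must survive perturbation is \eqref{eq:criterion-path-hypothesis} for almost-live paths. The paper obtains it as follows:
\begin{enumerate}
\item mark vertices independently with probability $s$;
\item in the block event, require liveness only at unmarked internal vertices;
\item trade the cost $s^{\eta R}$ of marking all failures against the $e^{-\gamma R}$ decay of the dominated block field.
\end{enumerate}
Your appeal to ``the exponential tail bounds from the previous paragraph'' does not supply this defect-tolerant estimate.

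Finally, the degree-three input you cite, Proposition~\ref{prop:subcubic-recurrence}, is a \emph{critical} branching comparison: offspring uniform on $\{0,1,2\}$, mean exactly $1$. It has no slack and does not survive perturbation. This is the danger you flag at the end, but you treat the branching as subcritical. The stable inputs are the exponential decay estimates of Propositions~\ref{prop:degree-three-passage} and~\ref{prop:square-passage}. The former uses periodicity to find, near every edge, a vertex of degree at most two or a short cycle. Each gives the strict single-scale bound \eqref{eq:block-crossing-hypothesis} under the uniform law, and that inequality is the only thing that needs to be perturbed.
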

	
	For example, Proposition~\ref{prop:small-perturbations} applies, for every
	sufficiently small $\varepsilon>0$, to independent square-lattice rotors with
	\[
	\P\{\rho(v)=N\}=\frac14+\varepsilon,\qquad
	\P\{\rho(v)=S\}=\frac14-\varepsilon,\qquad
	\P\{\rho(v)=E\}=\P\{\rho(v)=W\}=\frac14\, .
	\]

	The lower bound $|R_t|\geq c t^{2/3}$ in Theorem~\ref{thm:main} is known in
	much greater generality: \citet[Theorem~1.1]{FlorescuLevinePeres2016} prove it on two-dimensional lattices
	for every initial rotor configuration.  On the other hand, as proved
	in \citet[Theorem~1.2]{FlorescuLevinePeres2016} and remarked in
	\citet[Section~1]{AngelHolroyd2012}, if the initial rotor configuration has an
	infinite path directed toward the starting vertex, then the walk is
	transient and the range grows linearly.  Our proof of
	Theorem~\ref{thm:main} thus uses randomness in an essential way.
	
	On the other hand, Theorem~\ref{thm:main} is false on some doubly periodic planar graphs. For an integer $M\geq1$, let $G_M$ be the square lattice with $M$ leaves
	attached to every lattice vertex, all of them drawn between the edge to
	the west and the edge to the north, so that the clockwise order at a lattice
	vertex is
	\[
	N,E,S,W,L_1,\ldots,L_M\, ,
	\]
	see Figure~\ref{fig:pendant}.
	
	\begin{proposition}[A transient doubly periodic plane graph]
		\label{prop:pendant-counterexample}
		There exists $M_0<\infty$ such that for every $M\geq M_0$, the rotor walk on
		$G_M$ with the clockwise rotor mechanism and independent uniform initial
		rotors is transient. 
	\end{proposition}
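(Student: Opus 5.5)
We show that, for $M$ large, the walk escapes to infinity along a directed path built from the initial rotors. The key fact, from \citet[Theorem~1.2]{FlorescuLevinePeres2016} and \citet{AngelHolroyd2012}, is that if the initial rotor configuration contains an infinite path directed toward $o$, then the walk is transient. The leaves do not change this. A leaf $L_i$ has degree one, so its rotor always points back to its lattice vertex. A visit to a leaf therefore costs two steps and returns the walker to where it came from. We can thus treat the walk on $G_M$ as a rotor walk on $\Z^2$ with a modified mechanism: each rotor cycles through $N,E,S,W$ and then makes $M$ consecutive "dead" exits.

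\textbf{Idea.} When $M$ is large, a uniform rotor at a lattice vertex points to a leaf with probability $M/(M+4)\approx1$. From such a position, the next real exit the rotor produces is $N$. So essentially every vertex, on its first visit, sends the walker north. A vertex whose rotor instead points to one of $N,E,S,W$ (a "defect") has probability only $4/(M+4)$.

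\textbf{Step 1: reduce to an effective environment.} Replace the rotor at each lattice vertex by its effective state, meaning the first lattice direction it emits. This is $N$ with probability $(M+1)/(M+4)$, and $E,S,W$ with probability $1/(M+4)$ each, independently across sites. Until a vertex has been exited once, the walk agrees with the $\Z^2$ rotor walk started from this effective configuration. After that first exit the rotor follows the order $N,E,S,W$, with the dead stretch inserted between $W$ and $N$. Those dead stretches only add time, so the sequence of lattice moves is exactly the $\Z^2$ clockwise rotor walk started from the effective configuration. The problem is therefore: show that clockwise rotor walk on $\Z^2$ is transient a.s. when the initial rotors are i.i.d. with $\P\{N\}\ge 1-\eta$ and $\eta$ small.

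\textbf{Step 2: find a directed path to $o$, then apply the transience criterion.} We look for an infinite path of rotors directed toward $o$ in the sense of the criterion. A vertex $v$ whose rotor "currently points" toward its successor, in the sense relevant after rotation, lies on such a path. The rotor records the last exit, so the walker next leaves $v$ along $\pi_v(\rho(v))$. Concretely, we try the south ray $\{(0,-k):k\ge1\}$. Each of these vertices has effective state $N$, so its rotor points toward $o$ with probability close to 1, but a defect occurs with probability $\eta$ at each site. So the ray itself is not fully directed. Instead we use an oriented-percolation argument. Consider the set of sites from which following the initial rotors leads to $o$ by an infinite backward path. Paths are allowed to bend around defects using the rotors of neighbouring sites, which are also mostly $N$; a site $(x,-k)$ pointing $N$ leads to $(x,-k+1)$. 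The existence of an infinite directed path with all rotors pointing toward $o$ is then at least as likely as the existence of an infinite up-right open path in oriented site percolation, with sites open when the rotor is $N$ or $E$, suitably rotated. Since $\eta$ is small, that density exceeds the critical threshold, so such a path exists with positive probability. To upgrade to probability one, we use that any finite modification of the rotors near $o$ changes the walk only finitely, and apply a 0-1 law: transience is a tail event up to finite changes, and the abelian property gives the needed monotonicity.

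\textbf{Main obstacle.} The delicate part is making Step 2 precise. The path must lead into $o$ exactly. The cited criterion requires the path to point toward the \emph{starting vertex}, and the walk must actually exit along it. We would handle this by showing that, a.s., the rotors contain an infinite directed tree whose root is a vertex $v$ that the walk visits, with all rotors on the tree pointing toward $v$. The walk reaches $v$ in finite time, and from that time the criterion applies to the current configuration. This is cleanest for oriented percolation with $N$-pointing sites in the half-plane below, taking $v$ on a horizontal line the walker must cross.
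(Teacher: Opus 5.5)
Your Step~1 is correct and is the paper's own reduction. The gap is in Step~2, and it cannot be repaired.

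\textbf{The direction is reversed.} ``Effective state $N$'' means the \emph{next} exit is $N$, so the last-exit rotor is $W$. The criterion you cite needs $\rho_0(x_i)=(x_i\to x_{i-1})$, so that the edge back toward $o$ is the \emph{last} one used at $x_i$. That is what forces $T(1)=\infty$. Every edge into $o$ must be used in a completed first circuit, and edges are distinct there. So using $x_i\to x_{i-1}$ requires all edges into $x_i$, hence $x_{i+1}\to x_i$, and so on forever. A south-ray vertex whose next exit is $N$ does the opposite: it sends the walker straight back toward $o$.

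\textbf{With the convention fixed, such paths almost surely do not exist.} Every effective rotor value has probability at least $1/(M+4)$. So along $v,\rho(v),\rho^2(v),\ldots$, each step onto a fresh site points back to the previous site with probability at least $1/(M+4)$. Hence every forward orbit closes a cycle, and each component of the functional graph $v\mapsto\rho(v)$ contains exactly one cycle. Send unit mass from every vertex to the least vertex of its component's cycle. The expected mass sent is $1$, so the mass-transport principle on $\Z^2$ shows all components are finite almost surely.

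A path with $\rho(x_i)=(x_i\to x_{i-1})$ for all $i$ lies in a single component. So there is no such path toward $o$, nor toward any vertex the walk later visits. Your comparison with supercritical oriented site percolation is where this breaks. A site's single rotor lets it extend at most one branch, and $\E\,\#\{v:\rho^k(v)=o\}=1$ for every $k$, so the backward tree is at best critical.

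\textbf{What the paper does instead.} You need a mechanism that uses the bias without any path of rotors. The paper shows directly that, with positive probability, the induced walk never returns to $o$. Suppose it did return.
\begin{enumerate}
\item The edges $F$ used up to the first return are distinct by Lemma~\ref{lem:one-circuit} and balanced at every vertex, with $k(v)\le 4$ departures from $v$.
\item Let $U$ be the set reached from $o$ by $\ell^\infty$ steps through vertices with $1\le k(v)\le 3$. Vertices with $k=4$ use all their edges, so the $F$-edges crossing between $U$ and these vertices come in opposite pairs. A discrete divergence argument then gives $\sum_{v\in U}\sum_{v\to w\in F}(w-v)=0$.
\item The edges of $F$ out of $v$ are the first $k(v)$ after $\rho(v)$. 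So a $W$-rotor contributes $1$ or $2$ to the coordinate sum, and any other rotor contributes at least $-2$.
\item This forces at least $|U|/3$ non-$W$ rotors in the $\ell^\infty$-connected set $U\ni o$. A Peierls count over such sets makes this event have probability less than $1$ once $M$ is large.
\end{enumerate}
Hence $T(1)=\infty$ with positive probability, which gives transience.
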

	
	\begin{figure}[t]
		\centering
		\begin{tikzpicture}[scale=2.1,
			ghost/.style={-{Stealth[length=2.2mm]},line width=0.7pt,draw=black!30}]
			\foreach \x in {0,1} \draw[ledge] (\x,-0.72) -- (\x,1.72);
			\foreach \y in {0,1} \draw[ledge] (-0.72,\y) -- (1.72,\y);
			\foreach \x in {0,1} \foreach \y in {0,1} {
				\foreach \a in {110,135,160} {
					\draw[ledge] (\x,\y) -- ($(\x,\y)+(\a:0.6)$);
					\node[lvert] at ($(\x,\y)+(\a:0.6)$) {};
				}
				\node[lvert] at (\x,\y) {};
			}
			\draw[ghost] (0,0) -- ++(0,0.5);
			\draw[ghost] (0,0) -- ++(0.5,0);
			\draw[ghost] (0,0) -- ++(0,-0.5);
			\draw[ghost] (0,0) -- ++(-0.5,0);
			\foreach \a in {110,135,160} \draw[ghost] (0,0) -- ++(\a:0.5);
			\node[lvert] at (0,0) {};
			\node at (0.12,0.55) {\scriptsize $N$};
			\node at (0.55,-0.12) {\scriptsize $E$};
			\node at (0.12,-0.55) {\scriptsize $S$};
			\node at (-0.55,-0.12) {\scriptsize $W$};
			\node at ($(0,0)+(160:0.80)$) {\scriptsize $L_1$};
			\node at ($(0,0)+(135:0.80)$) {\scriptsize $L_2$};
			\node at ($(0,0)+(110:0.80)$) {\scriptsize $L_3$};
		\end{tikzpicture}
		\caption{The graph $G_M$ for $M=3$.}
		\label{fig:pendant}
	\end{figure}
	
	The lattice steps of the rotor walk on $G_M$ form a rotor walk on $\Z^2$
	whose induced rotors are independent and point west with probability
	$(M+1)/(M+4)$.
	For large enough $M$ this bias must force the walk to escape
	to infinity.  Since our result on $\Z^2$ is stable with respect to the
	randomness (Proposition~\ref{prop:small-perturbations}), the bias $M$ must be
	large.  We carry this out in Section~\ref{sec:counterexample}. 
	
	\smallskip
	We collect several open questions raised by these results in
	Section~\ref{sec:further-questions}.
	
	\subsection{Proof overview}\label{ssec:proof-overview}
	
	\citet{PriezzhevDharDharKrishnamurthy1996} gave a heuristic argument for why
	$|R_t|\sim t^{2/3}$ on $\Z^2$.  They observed that the rotors used by the
	walker at large time are arranged nearly into an Eulerian circuit, so the
	walker cannot visit a new site without making roughly four more departures
	from every site already in the range.  Growing the radius of visited
	sites from $r$ to $r+1$ therefore costs on the order of $r^2$ steps, giving
	$dr/dt\approx r^{-2}$, hence $r\approx t^{1/3}$ and a range of about
	$r^2\approx t^{2/3}$ vertices.
	
	Parts of this heuristic were formalized in \citet{FlorescuLevinePeres2016},
	who showed, for more general graphs, that if $T(n)<\infty$, then between
	$T(n)$ and $T(n+1)$ every directed edge is used at most once.  Moreover, if
	$T(n+1)<\infty$, then during this interval the walk departs exactly
	$\deg(v)$ times from every vertex $v\in A_n$
	(Lemma~\ref{lem:one-circuit}).  Thus, if $T(n)<\infty$, then
	$A_n$ contains the ball of radius $n$.  These facts were used in
	\citet{FlorescuLevinePeres2016} to give a lower bound of $t^{2/3}$ for the
	range on two-dimensional graphs.
	
	Our proof takes these facts as its starting point and proceeds in three
	steps.
	\begin{enumerate}[label=\textup{(\arabic*)}]
		\item The circuit ranges evolve by a monotone rule that depends only on the
		current range, so the number of circuits needed to reach one vertex from
		another behaves like a first-passage time: it obeys a triangle inequality,
		and the subadditive ergodic theorem gives it a linear growth rate in each
		direction.  These rates form a passage function whose unit ball is the limit
		shape, and the theorem reduces to showing that the function does not vanish.
		\item To rule that out, we show that a walk reaching a distant vertex in few
		circuits forces the initial rotors to contain a long path along which they
		favor motion outward.  This step is deterministic: it turns a statement about
		growth into one about the initial configuration.
		\item Such paths are unlikely.  This is the only step that depends on the
		graph, and on the square lattice it is the bulk of the work.
	\end{enumerate}
	\medskip
	
	\noindent\textbf{Step 1.}  Recall $A_n$ and $T(n)$ from
	Section~\ref{ssec:main-results}.  For a finite set $S$ of vertices, place one particle at the
	head of each directed edge leaving $S$, declare $S$ absorbing, and move the
	particles by the rotor rule, one step at a time in an arbitrary order, until
	every particle has stepped into $S$.  When this procedure terminates, let
	$\Phi(S)$ be the union of $S$ and the set of vertices visited during it.  Observe that if
	$T(n)<\infty$, then
	\begin{equation}\label{eq:an-monotone-map}
		A_n=\Phi^n(\{o\})\, .
	\end{equation}
	For vertices $x$ and $y$, define
	\begin{equation}\label{eq:stationary-passage-time}
		\tau(x,y)\coloneqq\min\{n\geq0:y\in\Phi^n(\{x\})\}\, ,
	\end{equation}
	that is, the number of circuits required for the walk started at $x$ to reach
	$y$.  Section~\ref{sec:circuits} extends this definition to configurations on
	which some boundary routing fails to terminate:
	Lemma~\ref{lem:least-action} makes $\Phi$ well defined,
	Proposition~\ref{prop:circuit-iterate} gives
	\eqref{eq:an-monotone-map}, and Proposition~\ref{prop:passage} gives the
	triangle inequality for $\tau$.  Since $\tau$ is also stationary in law and at
	most the graph distance, the subadditive ergodic theorem provides a
	deterministic limit $\tau(o,nz)/n\to\mu(z)$, and the unit ball of $\mu$ is the
	limit shape.  The main
	difficulty is to show that $\mu$ does not vanish, that is, to rule out
	abnormally fast growth.
	
	\medskip
	
	\noindent\textbf{Step 2.}  We convert the question of fast growth into one
	about the initial rotors.  The idea is that if a circuit is abnormally
	large, then the initial rotors must be abnormally configured.  To make this
	precise we use live paths, introduced by \citet[Section~2]{AngelHolroyd2011}.
	
	Here and throughout the paper, a \defn{path} $x_0,x_1,\ldots$ is a finite or
	infinite sequence of distinct vertices such that consecutive vertices are
	adjacent.  Its \defn{internal} vertices are all its vertices except the first
	and, for a finite path, the last.  A path is \defn{live at an internal vertex
		$x_i$} if, in the cyclic order beginning immediately after the initial rotor
	at $x_i$, the edge $x_i\to x_{i+1}$ occurs before $x_i\to x_{i-1}$.  The path
	is \defn{live} if this holds at every internal vertex.  See
	Figure~\ref{fig:live-path}.  As we
	will see below, a circuit that reaches a distant vertex forces a long live
	path to exist (Lemma~\ref{lem:decreasing-positions}).
	
	\begin{figure}[!ht]
		\centering
		\begin{tikzpicture}[
			scale=1.25,
			lat/.style={draw=black!22,line width=0.5pt},
			path/.style={-{Stealth[length=2.4mm]},draw=RoyalBlue,line width=1.5pt},
			rotor/.style={-{Stealth[length=3.4mm]},draw=BrickRed,line width=2.4pt},
			slot/.style={circle,draw=black!45,fill=white,inner sep=0pt,
				minimum size=4mm,font=\scriptsize}]
			\draw[lat] (-1.6,0)--(1.6,0);
			\draw[lat] (0,-1.6)--(0,1.6);
			\draw[path] (-1.35,0)--(-0.08,0);
			\draw[path] (0,-0.08)--(0,-1.35);
			\draw[rotor] (0,0)--(0,1.02);
			\node[circle,fill=black,inner sep=1.3pt] at (0,0) {};
			\node[slot] at (0.52,0) {1};
			\node[slot,draw=RoyalBlue,fill=RoyalBlue!12] at (0,-0.52) {2};
			\node[slot,draw=BrickRed,fill=BrickRed!12] at (-0.52,0) {3};
			\node[slot] at (0,0.52) {4};
			\node[font=\scriptsize] at (-1.18,0.25) {$x_{i-1}$};
			\node[font=\scriptsize] at (0.25,0.20) {$x_i$};
			\node[font=\scriptsize] at (0.30,-1.20) {$x_{i+1}$};
		\end{tikzpicture}
		\caption{The initial rotor at $x_i$ is red, and the circles number the
			outgoing edges in the order they are used.  Since $x_i\to x_{i+1}$ comes
			before $x_i\to x_{i-1}$, the path is live at $x_i$.}
		\label{fig:live-path}
	\end{figure}
	
	In their proof of Theorem~6, \citet{AngelHolroyd2011} showed that on a
	rooted tree in which every non-root vertex has degree $d$, with uniform
	initial rotors, live paths percolate when $d\geq4$ and do not percolate when
	$d=3$.  On a tree, an infinite live path
	forces the rotor walk to be transient; this implication is false for graphs
	with cycles.  On the other hand, for every graph, a rotor walk with no infinite live
	path is recurrent (Proposition~\ref{prop:live-recurrence}).  If
	$y\in\Phi^n(\{x\})$, then some path from $x$ to $y$ fails the live condition
	at no more than $n-1$ of its internal vertices
	(Lemma~\ref{lem:decreasing-positions}).

	\medskip 
	\noindent\textbf{Step 3.}  The absence of infinite live paths does not by
	itself rule out abnormally fast growth.  On the rooted binary tree, whose
	non-root vertices have degree three, the uniform rotor walk is recurrent by
	\citet{AngelHolroyd2011}, but its range grows linearly by
	\citet[Theorem~1.1]{HussSavaHuss2020Range}.  What we need is quantitative
	control of almost-live paths: for some $\eta>0$,
	\begin{equation}\label{eq:overview-path-estimate}
		\lim_{R\to\infty}\sup_{u\to v}
		\P\left\{
		\begin{array}{c}
			\text{some path starts with $u\to v$ and reaches distance $R$ from $u$,}\\
			\text{and the live condition fails at no more than $\eta R$ internal vertices}
		\end{array}
		\right\}=0\, .
	\end{equation}
	Observe that this estimate fails on the binary tree for every $\eta>0$.  
	Section~\ref{sec:recurrence} proves that \eqref{eq:overview-path-estimate}
	suffices for Theorem~\ref{thm:main}.  It also reduces this estimate to one
	block event. Sections~\ref{sec:killed-walk} and~\ref{sec:square} verify this estimate 
	in each of our two cases.  Since the block event depends on finitely many rotors, the estimate
	persists under small perturbations of their laws, giving
	Proposition~\ref{prop:small-perturbations}. Our proof of this estimate is different in the maximal degree 3 and square lattice cases. 
	
	At a vertex of degree three the number of ways to continue a live path is
	uniform on $\{0,1,2\}$, so on any graph of maximal degree 3, an exploration of the live paths from a given
	edge is dominated by a critical Galton--Watson process and dies out
	(Section~\ref{sec:killed-walk}).  The argument combines the branching
	comparison of \citet{AngelHolroyd2011} with the exploration of
	\citet[Theorem~1]{BenjaminiSchramm1996}; it does not need periodicity and
	already gives recurrence (Proposition~\ref{prop:subcubic-recurrence}).
	On a doubly periodic graph balls grow polynomially, so within a bounded
	distance of every edge there is either a vertex of degree at most two or a
	vertex lying on a bounded cycle, and either one gives a live path a uniformly positive chance of ending or of
	repeating a vertex.  Iterating gives the required finite-scale estimate
	(Proposition~\ref{prop:degree-three-passage}).

	On the square lattice we instead compare live paths with critical bond
	percolation.
	Rotating a directed edge $v\to w$ counterclockwise through $90^\circ$ about its
	midpoint gives a dual edge from the face on the right of $v\to w$ to the face
	on its left.  Call it open when $v\to w$ comes second or third in the cyclic
	order beginning after the initial rotor at $v$
	(Figure~\ref{fig:live-dual}).  On a live path the edges between the outgoing
	edge and the reverse of the incoming one are used second or third, so their
	duals are open and chain into a directed path.
	
	We must therefore show that long open dual paths are rare.  The four dual
	edges at a vertex form a square, of which two adjacent sides are open, and each
	side is open with probability $1/2$.  Critical bond percolation on $\Z^2$ does
	not percolate, but its annular crossing probabilities remain bounded away from
	zero, so it does not give the block estimate.  The adjacency of the two open
	sides gives it instead: no open dual path traverses a fixed five-edge pattern,
	and percolation avoiding that pattern is subcritical
	(Subsection~\ref{sec:square-diminishment}).  The sides of a square are not
	independent, so we transfer the estimate to the rotor model by an exploration
	(Section~\ref{sec:square}).  This is the dual configuration of the $p=1/2$
	directed-corner model of \citet*{CoupierHenryJahnelKoppl2024}, from which the
	estimate could also be deduced, but we give a self-contained proof.

	\begin{figure}[!ht]
		\centering
		\begin{tikzpicture}[scale=1.9,
			prim/.style={draw=black!22,line width=0.5pt},
			vert/.style={circle,fill=black,inner sep=1.2pt},
			face/.style={circle,draw=black!60,fill=white,line width=0.7pt,inner sep=1.6pt},
			pedge/.style={-{Stealth[length=2.4mm]},draw=RoyalBlue,line width=1.5pt},
			dedge/.style={-{Stealth[length=2.4mm]},draw=BrickRed,line width=1.4pt},
			shut/.style={-{Stealth[length=2.0mm]},draw=black!38,line width=0.9pt,
				dash pattern=on 2pt off 1.7pt},
			slot/.style={circle,draw=black!45,fill=white,line width=0.5pt,inner sep=0pt,
				minimum size=3.2mm,font=\tiny}]
			\foreach \x in {0,1,2,3} \draw[prim] (\x,-1.1) -- (\x,1.5);
			\foreach \y in {-1,0,1} \draw[prim] (-0.55,\y) -- (3.1,\y);
			\draw[pedge] (0.08,0) -- (0.92,0);
			\draw[pedge] (1.08,0) -- (1.92,0);
			\draw[pedge] (2,0.08) -- (2,0.92);
			\draw[dedge] (0.58,-0.5) -- (1.42,-0.5);
			\draw[dedge] (1.58,-0.5) -- (2.42,-0.5);
			\draw[dedge] (2.5,-0.42) -- (2.5,0.42);
			\draw[shut] (2.42,0.5) -- (1.58,0.5);
			\draw[shut] (1.5,0.42) -- (1.5,-0.42);
			\node[slot] at (2,0.34)  {1};
			\node[slot] at (2.34,0) {2};
			\node[slot] at (2,-0.34) {3};
			\node[slot] at (1.66,0) {4};
			\foreach \z/\n/\a in {(0,0)/{x_0}/{below left},(1,0)/{x_1}/{above left},
				(2,0)/{x_2}/{above left},(2,1)/{x_3}/{right}}
			\node[vert,label={[font=\scriptsize,inner sep=3pt]\a:$\n$}] at \z {};
			\foreach \z in {(0.5,-0.5),(1.5,-0.5),(2.5,-0.5),(2.5,0.5),(1.5,0.5)}
			\node[face] at \z {};
			\node[font=\scriptsize,RoyalBlue] at (0.5,0.62) {live path};
			\node[font=\scriptsize,BrickRed] at (1.35,-0.88) {open dual edges};
		\end{tikzpicture}
		\caption{A live path in blue and the open dual edges it yields in red; the
			circles are faces.  The numbers order the four edges at $x_2$: the path arrives
			along edge $4$ and leaves along edge $1$, so the sides at $2$ and $3$ are open
			and the dashed sides closed.}
		\label{fig:live-dual}
	\end{figure}
	
	\subsection{Related work}\label{ssec:related-work}  The rotor walk arose independently in statistical physics and computer science.
	\citet*{PriezzhevDharDharKrishnamurthy1996} introduced it in 1996 as a model
	of self-organized criticality related to the abelian sandpile
	\citep{BakTangWiesenfeld1987,Dhar1990}; in the same year,
	\citet*{WagnerLindenbaumBruckstein1996} introduced the same mechanism as an
	exploration rule.  Alternating routers also appeared in periodic load-balancing circuits \citep{RabaniSinclairWanka1998},
	and Propp proposed it around 2000 as a deterministic analogue
	of random walk, calling it the rotor-router model
	\citep{Propp2003,Propp2010}.  This derandomization perspective was developed
	by \citet{HolroydPropp2010} and \citet{CooperSpencer2006}, who obtained
	quantitative discrepancy bounds comparing rotor routing with random walk for
	finite Markov chains and for multiparticle dynamics on $\Z^d$, respectively.
	Early work on its dynamics studied avalanche structure and mean-square displacement in the critical state
	\citep{PovolotskyPriezzhevShcherbakov1998}.

	Propp also introduced rotor aggregation \citep{Propp2003}.   In rotor
	aggregation, $n$ particles are released one at a time at the origin, and each
	performs rotor walk until it reaches an unoccupied site, where it stops.
	Replacing every Eulerian walker by a simple random walk turns this into internal diffusion-limited aggregation, and the two models have the same
	limit shape on $\Z^d$, a Euclidean ball \citep{LawlerBramsonGriffeath1992,LevinePeres2009}.  On other graphs the shape reflects the geometry.  On
	regular trees, acyclic initial rotors produce an exact ball whenever the number
	of particles equals the size of that ball \citep{LandauLevine2009}.  For a
	particular initial rotor configuration on the two-dimensional comb, the boundary
	consists of parabolic arcs \citep{HussSava2011}.  On the Sierpi\'nski gasket,
	rotor aggregation and internal diffusion-limited aggregation both have
	graph-metric balls as limit shapes
	\citep{ChenKudlerFlam2020,ChenHussSavaHussTeplyaev2020}.  On the layered square lattice, obtained by
	modifying the routing mechanism at sites on the coordinate axes,
	\citet{KagerLevine2010} show that a suitable initial rotor configuration
	produces exact diamond-shaped clusters at certain sizes.  In the  abelian sandpile model, the scaling limit is not as simple
	\citep{PegdenSmart2013,BouRabee2021,BouRabee2024Shape} and is related to
	Apollonian circle packings \citep{LevinePegdenSmart2016,LevinePegdenSmart2017}
	and rational points on hyperbolas \citep{BouRabee2024FLattice}. See \citet{LevinePeres2017} for a survey.
	
	Propp's proposal raised the question of whether rotor walk is recurrent in
	dimension two and transient in higher dimensions
	\citep[Section~1]{FlorescuGangulyLevinePeres2014}.  Schramm settled one
	direction in an average sense: for a rotor walk restarted at its origin after
	each escape to infinity, the asymptotic frequency of these  escapes is at most the probability that a random walk from the same initial vertex escapes to infinity before returning to its origin~\citep[Theorem~10]{HolroydPropp2010}.  On $\Z^2$ the bound gives only zero escape frequency, which does not imply recurrence: with aligned rotors, the restarted walk still escapes infinitely often.  Neither the initial vertex nor a
	finite change in the initial rotors affects recurrence
	\citep{AngelHolroyd2012}, but the law of the initial rotors does.  With aligned rotors, among
	$n$ walks started successively from the origin, the number that never return is
	of order $n$ for $d\geq3$ and $n/\log n$ for $d=2$
	\citep{FlorescuGangulyLevinePeres2014}.  Every transient graph carries an
	initial configuration whose escape rate matches that of simple random walk
	\citep{Chan2020}, whereas rotors sampled from the oriented wired uniform
	spanning forest make the walk visit every vertex at most as often in
	expectation as simple random walk, and hence transient \citep{Chan2019}.
	
	As discussed above, rotor walks are relatively well understood on trees
	\citep{AngelHolroyd2011,HussSava2012,HussMullerSavaHuss2015,
		HussSavaHuss2020Range,HussSavaHuss2020Periodic}.  On other graphs, much less is
	known.  Recurrence holds for independent uniform
	rotors on the doubly infinite Sierpi\'nski gasket graph with the anticlockwise
	mechanism \citep{KaiserSavaHuss2024}.  \citet{FlorescuLevinePeres2016} prove a
	shape theorem on the comb and, through connections with the mirror model and
	critical bond percolation, recurrence on two directed lattices, the Manhattan lattice and the
	F-lattice. 
	
	Randomizing the updates, rather than only the initial rotors, makes related
	questions more tractable.  On $\Z$, an interpolation between rotor walk and
	simple random walk has a doubly perturbed Brownian scaling limit
	\citep{HussLevineSavaHuss2018}, and a quenched invariance principle holds for
	a class of walks with local memory on lattice graphs
	\citep{ChanGrecoLevineLi2021}.  One such walk is the horizontal--vertical
	walk with independent uniform initial labels, for which \citet{Chan2023}
	proves recurrence when the flip probability exceeds $1/3$.
	
	\subsection{Notation and conventions}
	
	\begin{itemize}
		\item For a graph $G=(V,E)$ with vertex set $V$ and edge set $E$, we write
		$x\sim y$ when $\{x,y\}\in E$, and $d_G(x,y)$ for the least number of edges
		in a path from $x$ to $y$.
		\item $|S|$ is the cardinality of a finite set $S$, while $|\cdot|_\infty$
		and $|\cdot|$ are the $\ell^\infty$ and Euclidean norms.
		\item $\P$ is the law of the initial rotor configuration.
		\item $c>0$ and $C>0$ are finite constants which may change from line to
		line.
	\end{itemize}
	
	\section*{Acknowledgments}
	The research of Y. Peres was supported by National Natural Science Foundation of China grant
	RFIS-W2531011.  We acknowledge use of ChatGPT 5.6.

	\section{Basic facts}\label{sec:circuits}
	In this section we prove three deterministic basic facts.  First, whenever
	they are defined, successive circuit ranges are the iterates of a monotone
	map; when all boundary routings terminate, these iterates are the balls of a
	passage time satisfying the triangle inequality
	(Propositions~\ref{prop:circuit-iterate}--\ref{prop:passage}).  Second, a walk
	that reaches a distant vertex in few circuits forces a path along which the
	live condition fails at few vertices
	(Lemma~\ref{lem:decreasing-positions}).  Third, the absence of an infinite
	live path implies recurrence (Proposition~\ref{prop:live-recurrence}).

	Throughout this section, $G$ is infinite, connected, and locally finite, the
	rotor mechanism and initial rotor configuration are fixed and arbitrary, and $o$ is the
	starting vertex of the walk. These results can be extended to certain directed graphs, but we do not need this; see
	\citet{HolroydLevineMeszarosPeresProppWilson2008}.
	
	\subsection{Circuits and routing}
	
	Recall from Section~\ref{ssec:main-results} that $R_t=\{X_0,\ldots,X_t\}$ is
	the range at time $t$, and that a circuit consists of $\deg(o)$ successive
	returns to $o$.  The completion time $T(n)$ of the first $n$ circuits is
	defined in \eqref{eq:circ-time}, and when $T(n)<\infty$ the set $A_n=R_{T(n)}$
	is the range at that time, with $A_0=\{o\}$.
	
	\begin{lemma}[\citealp{FlorescuLevinePeres2016}, Lemmas~2.1
		and~2.4]\label{lem:one-circuit}
		For every integer $n\geq0$ such that $T(n)<\infty$, the walk traverses each
		directed edge at most once during the time interval
		$T(n)\leq t<T(n+1)$.  If $T(n+1)<\infty$, then during this interval the
		walk departs from every vertex $x\in A_n$ exactly $\deg(x)$ times.
	\end{lemma}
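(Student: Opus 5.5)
We argue by induction on $n$, using two standard rotor-walk facts. First, if the walker leaves a vertex $x$ more than $\deg(x)$ times, it traverses some outgoing edge of $x$ twice, because the rotor at $x$ cycles through all $\deg(x)$ outgoing edges before any repeats. Second, the walk out of $o$ behaves like an Eulerian tour on the subgraph it has explored. The induction hypothesis at stage $n$ is the following: at time $T(n)$, for every $x\in A_n\setminus\{o\}$, the rotor $\rho_{T(n)}(x)$ points along the edge by which $x$ was first entered. In other words, the rotors on $A_n\setminus\{o\}$ form an oriented spanning tree of $A_n$ rooted at $o$. For $n=0$ the hypothesis holds vacuously, since $A_0=\{o\}$. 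Proving it for all $n$ is part of the argument, in the style of the proof in \citet{FlorescuLevinePeres2016}.

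\textbf{First claim.} Suppose some directed edge $x\to y$ is traversed twice in $[T(n),T(n+1))$, and take the first time $s$ at which some directed edge is traversed for the second time. By the first fact, the walker must have departed $x$ at least $\deg(x)+1$ times in the interval before time $s$. If $x\neq o$, we count entries into $x$. Each departure after the first is preceded by an entry, so $x$ has been entered at least $\deg(x)+1$ times in the interval; when $x$ was first discovered in this interval, the bound still holds with the entry at discovery counted. Since $x$ has only $\deg(x)$ incoming edges, some incoming edge $z\to x$ has been traversed twice strictly before $s$. This contradicts the minimality of $s$. If $x=o$, the walker has departed $o$ at most $\deg(o)$ times within one circuit, by the definition of $T(n+1)$. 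Hence no edge out of $o$ can be reused, and again we get a contradiction. This shows that each directed edge is used at most once.

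\textbf{Second claim.} Assume $T(n+1)<\infty$. The circuit is a closed walk from $o$ to $o$ using each directed edge at most once. For every vertex, the number of departures therefore equals the number of arrivals, and both are at most $\deg$. At $o$ there are exactly $\deg(o)$ departures, hence all $\deg(o)$ incoming edges $z\to o$ are used. Now suppose some $x\in A_n$ departs fewer than $\deg(x)$ times. Then some outgoing edge is unused, so the rotor at $x$ has not completed a full turn. We then propagate this deficit along the tree path from $x$ to $o$, using the spanning-tree structure from the induction hypothesis. The last outgoing edge in the rotor order at $x$ is exactly the tree edge toward the parent of $x$, and it is used only after all other edges of $x$ have been used. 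Consequently, a vertex $x$ that completes its departures sends the walker along the tree edge $x\to\mathrm{parent}(x)$. Conversely, if the tree edge $x\to p$ is unused, then $p$ lacks an arrival along $x\to p$, so $p$ departs fewer than $\deg(p)$ times. Applying this repeatedly toward the root, we reach $o$ lacking an arrival, a contradiction. The same bookkeeping shows that at time $T(n+1)$ the last exits from the vertices of $A_{n+1}\setminus\{o\}$ again form a spanning tree oriented toward $o$, and this completes the induction.

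\textbf{Main obstacle.} The main difficulty is to state and maintain the spanning-tree invariant correctly. Vertices newly discovered during the circuit are not required to depart $\deg$ times, yet their last exits must still form an oriented tree. We also need to justify that last-exit edges contain no cycle: along a cycle, the last-exit times would have to increase strictly, which is impossible. The deficit propagation also needs care at vertices newly added in the same circuit. We plan to handle all of this with the standard last-exit tree argument.
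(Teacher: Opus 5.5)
Your argument is correct and is the standard one behind the cited lemmas of \citet{FlorescuLevinePeres2016}, which the paper quotes without proof. The at-most-once claim uses the first-repeated-edge pigeonhole, the same argument the paper uses for Lemma~\ref{lem:boundary-routing}. The second claim uses the oriented spanning tree of rotors at time $T(n)$: the tree edge at $x$ is last in the cyclic order, so a departure deficit at $x\in A_n$ propagates up the tree, which stays in $A_n$, to a missing arrival at $o$.

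One correction: $\rho_{T(n)}(x)$ points along the edge by which $x$ was last \emph{exited}, not back along its first entry. For example, on the $4$-cycle $o\sim y\sim x\sim w\sim o$ with initial rotors $o\to w$, $y\to o$, $x\to w$, $w\to o$, the walk is $o,y,x,y,o,w,x,w,o$, so $x$ is first entered from $y$, yet $\rho_{T(1)}(x)$ points to $w$. Since the last-exit edges form a spanning tree of $R_t$ rooted at $X_t$ at every time $t$, your invariant holds directly and needs no induction on $n$.
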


	Outside a finite set $S$, it is convenient to view a circuit as a system of
	particles, one for each directed edge leaving $S$.  Moving the particles in
	different orders produces the same final state.
	
	Fix a nonempty finite set $S\subseteq V$ and regard its vertices as sinks:
	they carry no rotors and remove every particle that enters them.
	A \defn{particle-and-rotor state} is a pair $\xi=(\sigma,\rho)$ consisting of
	a particle configuration $\sigma:V\setminus S\to\Z_{\geq0}$ and a rotor
	configuration $\rho$ on $V\setminus S$.  A vertex is \defn{occupied} when it
	contains at least one particle.  To \defn{actuate} an occupied vertex, advance
	its rotor and move one particle along the new rotor edge, removing the
	particle if that edge enters $S$.
	
	A state $\xi'$ is a \defn{successor} of $\xi$ if one actuation transforms
	$\xi$ into $\xi'$, and a state is \defn{stable} if no particles remain.  A
	\defn{legal routing} is a finite or infinite sequence of states, each a
	successor of the one before.  It is \defn{complete} if it is finite and its
	last state is stable.  A \defn{boundary routing} of $S$ is a legal routing
	started with one particle at $x$ for each directed edge $s\to x$ with
	$s\in S$ and $x\notin S$, and with the fixed initial rotors on
	$V\setminus S$.  We say that the boundary routing of $S$ \defn{terminates} if
	a complete boundary routing of $S$ exists.
	
	The following is an immediate consequence of the abelian property, see
	\citet[Lemma~3.9]{HolroydLevineMeszarosPeresProppWilson2008}.

	\begin{lemma}[Abelian property]\label{lem:least-action}
		Let $\xi_0,\ldots,\xi_n$ and
		$\widehat\xi_0,\ldots,\widehat\xi_m$ be legal routings with
		$\widehat\xi_0=\xi_0$.
		\begin{enumerate}[label=\textup{(\alph*)}]
			\item If $\xi_n$ is stable, then $m\leq n$, and each vertex is actuated no
			more often in the second routing than in the first.
			\item If $\xi_n$ and $\widehat\xi_m$ are both stable, then $m=n$, their
			final states agree, and each vertex is actuated equally often in the two
			routings.
		\end{enumerate}
	\end{lemma}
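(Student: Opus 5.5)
The plan is to reduce everything to vectors of actuation counts and then run the standard least-action comparison, with no appeal to external results beyond the definitions. First we record the bookkeeping fact behind the abelian property. Fix the initial state $\xi_0=(\sigma_0,\rho_0)$. For a vertex $u\notin S$ and an integer $k\geq0$, let $r_u(k,v)$ be the number of $i\in\{1,\ldots,k\}$ with $\pi_u^i(\rho_0(u))=(u\to v)$. This is the number of particles $u$ sends to $v$ in its first $k$ actuations. It depends only on $k$, and it is nondecreasing in $k$. If a legal routing from $\xi_0$ has actuated each vertex $u$ exactly $N(u)$ times, then its current state is determined by the vector $N$. Indeed, the rotor at $u$ is $\pi_u^{N(u)}(\rho_0(u))$, and the particle count at $v\notin S$ is
\[
\sigma_0(v)+\sum_{u\sim v,\,u\notin S} r_u(N(u),v)-N(v)\, .
\]
Particles entering $S$ are simply discarded, so they play no role.

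For part (a), let $N$ be the actuation vector of the routing $\xi_0,\ldots,\xi_n$, and let $M_j$ be that of $\widehat\xi_0,\ldots,\widehat\xi_j$. We prove $M_j\leq N$ coordinatewise by induction on $j$; the base case $M_0=0$ is trivial. Suppose $M_j\leq N$, and suppose step $j+1$ actuates a vertex $v$ with $M_j(v)=N(v)$. Legality requires $v$ to be occupied in $\widehat\xi_j$. By the formula above and monotonicity of $r_u(\cdot,v)$, the number of particles at $v$ in $\widehat\xi_j$ is
\[
\sigma_0(v)+\sum_u r_u(M_j(u),v)-M_j(v)\;\leq\;\sigma_0(v)+\sum_u r_u(N(u),v)-N(v)\, ,
\]
which is the number of particles at $v$ in $\xi_n$. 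That number is $0$ because $\xi_n$ is stable, a contradiction. Hence $M_{j+1}\leq N$. Summing coordinates then gives $j+1=\sum M_{j+1}\leq\sum N=n$ for every $j+1\leq m$, so $m\leq n$. The induction also shows each vertex is actuated no more often in the second routing than in the first.

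For part (b), apply (a) in both directions. This gives $M_m\leq N$ and $N\leq M_m$, so the two actuation vectors coincide, and hence $m=n$. Since the state is a function of the actuation vector, the final states agree as well.

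The only step needing care is the monotonicity inequality in (a). It must be checked that the particle count at $v$ is increasing in the counts at the neighbours and exactly compensated by the count at $v$ itself. When $M_j(v)=N(v)$ the $-M_j(v)$ and $-N(v)$ terms cancel, which is why the comparison works, and the rest is routine.
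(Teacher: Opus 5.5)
Your proof is correct. The state reached by a legal routing is a function of its actuation vector. The particle count at $v$ is nondecreasing in the neighbours' counts and is offset exactly by $v$'s own count. Stability of $\xi_n$ therefore prevents the second routing from ever pushing a coordinate past $N$, and part~(b) follows by applying (a) in both directions.

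The paper gives no argument of its own here. It simply invokes the abelian property from \citet[Lemma~3.9]{HolroydLevineMeszarosPeresProppWilson2008}, which is formulated for finite directed graphs. Your route differs in being self-contained, and also in its mechanism. Abelian properties of this kind are usually proved by a local exchange argument: the first vertex actuated in the second routing is occupied initially, so it must be actuated somewhere in the complete routing, and moving that actuation to the front preserves legality. You instead compare actuation vectors directly by monotonicity, which is a least-action argument and never rearranges routings.

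This buys two things. First, everything is local to the finitely many actuated vertices, so the argument applies verbatim to the infinite, locally finite graph with a finite sink set used here; the citation leaves the reader to check the transfer from the finite setting. Second, it isolates the structural reason behind part~(b): once the state is known to be a function of the actuation vector, equal counts immediately give equal final states. The citation, in turn, keeps the exposition short.
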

	
	Call a boundary routing \defn{one-particle-at-a-time} when it orders the
	directed edges $s\to x$ from $S$ to $V\setminus S$, routes each corresponding
	particle until it enters $S$ before starting the next, and stops only after the
	last particle enters $S$.  If a particle never enters $S$, the routing instead
	continues with that particle forever.  Rotor states are retained, and
	$s\to x$ counts as the first edge of the corresponding route.
	
	\begin{lemma}[Boundary routing]\label{lem:boundary-routing}
		No legal boundary routing traverses a directed edge more than once, counting
		the initial edges from $S$.  Every one-particle-at-a-time boundary routing is
		finite if and only if the boundary routing of $S$ terminates.  If so, each is
		complete and has the same actuation counts as every complete boundary routing.
	\end{lemma}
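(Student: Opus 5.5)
The plan is to prove the three assertions of Lemma~\ref{lem:boundary-routing} in order, treating the second and third as consequences of the first together with the abelian property (Lemma~\ref{lem:least-action}).

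\emph{No directed edge is traversed twice.} I will argue by contradiction and take the first moment in a legal boundary routing at which some directed edge $x\to y$ is traversed for the second time. Here $x\notin S$, since sinks are never actuated. The initial edges $s\to x$ with $s\in S$ are never traversed by an actuation, so the repeated edge is not one of them. Between the two traversals of $x\to y$, the rotor at $x$ makes a full turn, so $x$ is actuated at least $\deg(x)+1$ times in total. I will then account for the particles arriving at $x$. Each actuation of $x$ consumes one particle that arrived at $x$, either initially along some $s\to x$ or by a traversal of some $z\to x$ with $z\notin S$. Before the moment in question, every directed edge into $x$ has been traversed at most once, counting the initial edges. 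Hence at most $\deg(x)$ particles have arrived at $x$, which is too few to pay for $\deg(x)+1$ actuations. This is a contradiction.

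The indexing needs care: the second traversal of $x\to y$ is itself the $(\deg(x)+1)$-st actuation of $x$, and it is the first repeat in the whole routing. Edges into $x$ that were used before this step were therefore used at most once. This is the usual Eulerian counting argument, in the style of the proof of Lemma~\ref{lem:one-circuit}.

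\emph{Finiteness of one-particle-at-a-time routings.} Suppose first that the boundary routing terminates, so a complete routing $\xi_0,\dots,\xi_n$ exists. Every finite initial segment of a one-particle-at-a-time routing is legal and starts from $\xi_0$. By Lemma~\ref{lem:least-action}(a), such a segment has length at most $n$. The routing is therefore finite. By construction, a finite one-particle-at-a-time routing stops only after its last particle has entered $S$, so it ends in a stable state and is complete. Conversely, if some one-particle-at-a-time routing is finite, then it is complete, and so the boundary routing terminates. Lemma~\ref{lem:least-action}(b) then shows that all complete routings have the same actuation counts, which gives the final assertion.

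\emph{Main obstacle.} The only real subtlety is in the first step: setting up the first-repetition argument so that particles supplied along the initial edges $s\to x$ are counted correctly. Once that is done, the remaining two assertions follow directly from Lemma~\ref{lem:least-action}.
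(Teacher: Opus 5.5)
Your proof is correct and follows essentially the same route as the paper. You argue by first repetition, comparing the $\deg(x)+1$ actuations of the tail against at most $\deg(x)$ arrivals along distinct incoming edges (counting the initial edges from $S$), and you then derive finiteness, completeness and equal actuation counts directly from Lemma~\ref{lem:least-action}(a) and~(b).
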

	
	\begin{proof}
		If $u\to v$ were the first repeated directed edge, then $u\notin S$ would
		have been actuated $\deg(u)+1$ times.  Legality would therefore require at
		least $\deg(u)+1$ incoming traversals at $u$, counting the initial edges from
		$S$.  One of the $\deg(u)$ incoming directed edges would then
		have repeated earlier, a contradiction. The remaining two assertions follow immediately from Lemma~\ref{lem:least-action}.
	\end{proof}
	
	Define the \defn{circuit map} $\Phi$, on the nonempty finite sets
	$S\subseteq V$ whose boundary routing terminates, by
	\begin{equation}\label{eq:phi-definition}
		\Phi(S)
		\coloneqq
		S\cup\{x\notin S:\text{$x$ is actuated in a complete routing}\}\, .
	\end{equation}
	Set $\Phi^0(S)\coloneqq S$; a positive iterate is defined only if every
	intervening boundary routing terminates.  Lemma~\ref{lem:least-action} makes
	$\Phi(S)$ independent of the chosen complete routing, and by construction it
	is independent of the rotor states inside $S$.
	
	\begin{proposition}[Circuit ranges are iterates]
		\label{prop:circuit-iterate}
		Let $n\geq0$ and suppose that $T(n)<\infty$.  Then $T(n+1)<\infty$ if and
		only if the boundary routing of $A_n$ terminates,
		and in that case
		\begin{equation}\label{eq:circuit-iterate}
			A_{n+1}=\Phi(A_n)\, .
		\end{equation}
	\end{proposition}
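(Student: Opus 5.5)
We will identify the $(n+1)$-st circuit of the walk with a particular one-particle-at-a-time boundary routing of $A_n$. The conclusions then follow from Lemma~\ref{lem:boundary-routing}, with Lemma~\ref{lem:one-circuit} supplying the structure inside $A_n$.

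\emph{Setup.} At time $T(n)$ the walker sits at $o$ with rotor state $\rho_{T(n)}$. Consider the walk on the interval $T(n)\le t<T(n+1)$, which may be infinite. Each time the walker is in $A_n$ and steps along an edge $s\to x$ with $s\in A_n$ and $x\notin A_n$, we begin an excursion. The excursion is the portion of the walk from $x$ until the next entrance to $A_n$, or forever if there is none. Outside $A_n$, the walker simply performs rotor steps with the rotors currently present.

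\emph{Key observation.} Every vertex outside $A_n$ has not been visited before time $T(n)$. So at each excursion's start, the rotors on $V\setminus A_n$ are the initial rotors, modified only by earlier excursions of the same circuit. Hence the concatenation of excursions, in order, is exactly a one-particle-at-a-time routing. The particles are indexed by the outgoing boundary edges that the walker uses, in the order of use, and $A_n$ is treated as sinks. By Lemma~\ref{lem:one-circuit} no directed edge is traversed twice during the circuit, so each boundary edge $s\to x$ launches at most one excursion.

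\emph{Forward direction.} Suppose $T(n+1)<\infty$. Lemma~\ref{lem:one-circuit} says every $s\in A_n$ departs exactly $\deg(s)$ times, so it uses each of its outgoing edges exactly once. In particular every boundary edge $s\to x$ is used exactly once. Thus the excursions form a complete one-particle-at-a-time boundary routing of $A_n$, each excursion returning to $A_n$ because the circuit ends. The boundary routing therefore terminates. The vertices visited outside $A_n$ are exactly the actuated vertices together with the endpoints $x$. Each such $x\notin A_n$ is actuated unless the particle is absorbed immediately, and that cannot happen because a particle at $x\notin S$ must be actuated to leave. Hence $A_{n+1}=A_n\cup\{\text{actuated vertices}\}=\Phi(A_n)$, and Lemma~\ref{lem:least-action} makes this independent of the routing order.

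\emph{Converse, the main obstacle.} Suppose the boundary routing terminates but $T(n+1)=\infty$. Then there are two cases.

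\begin{itemize}
\item Some excursion never returns. The walk's excursions then form a one-particle-at-a-time routing, or a prefix of one, that is infinite. This contradicts Lemma~\ref{lem:boundary-routing}, which says every such routing is finite.
\item All excursions return, yet the walker never completes $\deg(o)$ returns to $o$. Then the walker stays forever in $A_n\cup(\text{finitely many excursion vertices})$, a finite set. Some directed edge is therefore traversed twice in the interval, contradicting Lemma~\ref{lem:one-circuit}.
\end{itemize}

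That lemma's first claim only assumes $T(n)<\infty$, so it applies here. The care needed is in checking that the walk's rotor usage at boundary edges matches the initial-edge convention of the boundary routing. Some boundary edges might never be used by the walk, but Lemma~\ref{lem:least-action}(a) handles that through the comparison of actuation counts.
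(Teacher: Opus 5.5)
Your proof is correct and follows essentially the same route as the paper: you treat the $(n+1)$st circuit, with the actuations inside $A_n$ deleted, as a one-particle-at-a-time boundary routing of $A_n$. You then use Lemma~\ref{lem:one-circuit} to show it is complete when $T(n+1)<\infty$, and combine Lemma~\ref{lem:boundary-routing} with the edge-once property to force $T(n+1)<\infty$ when the routing terminates, with the abelian property giving $A_{n+1}=\Phi(A_n)$. The only difference is organizational: the paper proves the converse in one stroke by bounding outside actuations via least action and inside actuations via Lemma~\ref{lem:one-circuit}, whereas you split into a non-returning-excursion case and a confinement-to-a-finite-set case, using the same ingredients.
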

	
	\begin{proof}
		Starting at time $T(n)$, delete the actuations in $A_n$ and regard the
		traversals leaving $A_n$ as initial boundary edges.  This gives a legal
		boundary routing.
		If the boundary routing of $A_n$ terminates,
		Lemma~\ref{lem:boundary-routing} bounds the number of outside actuations.
		Before time $T(n+1)$ the walk departs from
		each vertex $x$ at most $\deg(x)$ times by Lemma~\ref{lem:one-circuit}, so the
		actuations inside the finite set $A_n$ are finitely many as well.  Hence
		$T(n+1)<\infty$.  Conversely, if $T(n+1)<\infty$, the walk uses every
		directed edge leaving $A_n$ once between $T(n)$ and $T(n+1)$, so the routing
		is complete.  The abelian property gives
		\eqref{eq:circuit-iterate}.
	\end{proof}
	
	Lemma~\ref{lem:boundary-routing} and Lemma~\ref{lem:least-action} imply monotonicity 
	of the circuit map. 
	\begin{proposition}[Monotonicity]\label{prop:monotonicity}
		Let $S\subseteq T$ be nonempty finite sets whose boundary routings
		terminate.  Then
		\begin{equation}\label{eq:monotonicity}
			\Phi(S)\subseteq\Phi(T)\, .
		\end{equation}
	\end{proposition}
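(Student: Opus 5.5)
We need $\Phi(S)\subseteq\Phi(T)$. Points of $S$ lie in $T\subseteq\Phi(T)$, so it suffices to show that every $x\notin T$ actuated in a complete boundary routing of $S$ is also actuated in a complete boundary routing of $T$. The plan is to build, from a complete boundary routing of $T$, a legal routing started from the boundary state of $S$ in which $T$ is not a sink, and then compare actuation counts via Lemma~\ref{lem:least-action}.

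\textbf{Step 1: embed the $T$-routing into a run for $S$.} Start from the boundary state of $S$ and route particles one at a time (Lemma~\ref{lem:boundary-routing}), treating only $S$ as a sink. For each particle, we stop watching it the first time it enters $T\setminus S$. Instead of this particle-level picture, we argue through the flow of edges.

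In the routing from $S$, vertices of $T\setminus S$ start with their fixed initial rotors. By Lemma~\ref{lem:boundary-routing}, no directed edge is traversed twice, counting the initial edges from $S$. Consider the finite set of vertices of $T\setminus S$ actuated in a complete $S$-routing. Then compare two quantities at each $x\notin T$: the number $a_S(x)$ of its actuations in the complete $S$-routing, and the number $a_T(x)$ of its actuations in the complete $T$-routing.

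\textbf{Step 2: show $a_S\le a_T$ off $T$.} Outside $T$, the rotors of both routings start from the same initial configuration. The only difference is the source of particles. In the $T$-routing, every edge $t\to y$ with $t\in T$ and $y\notin T$ injects a particle. In the $S$-routing, the particles leaving $T$ into $V\setminus T$ do so along edges $t\to y$ with $t\in T$, and each such edge is used at most once (again by Lemma~\ref{lem:boundary-routing}). Particles that enter $T$ can be frozen there, which is harmless.

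So consider the restricted process: run the $S$-routing and delete every particle on arrival in $T$. What is left is a legal routing outside $T$ whose particle input is a subset of the $T$-boundary input, together with any particles already outside $T$ initially. Those initial particles also sit at heads of edges from $S\subseteq T$, so they too are part of the $T$-boundary input.

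To make this precise, use the one-particle-at-a-time formulation. Decompose the complete $S$-routing into excursions outside $T$. Each excursion begins along a distinct directed edge from $T$ into $V\setminus T$ and ends on entering $T$. The actuations off $T$ in these excursions therefore form a legal routing on $V\setminus T$, with $T$ as a sink, started from a sub-configuration $\sigma'\le\sigma_T$ of the $T$-boundary particles.

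Now Lemma~\ref{lem:least-action}(a) applies, after adding the missing particles of $\sigma_T-\sigma'$ as extra particles that are not moved initially: the complete $T$-routing from $\sigma_T$ is stable and has finite length. A legal routing from $\sigma_T$ that first performs the excursion actuations is legal, since extra particles never obstruct legality. Hence it actuates each vertex at most as often as the complete $T$-routing does, which gives $a_S(x)\le a_T(x)$ for $x\notin T$. Consequently every $x\in\Phi(S)\setminus T$ satisfies $a_T(x)\ge1$, so $x\in\Phi(T)$.

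\textbf{Main obstacle.} The delicate point is the ordering issue in Step 2. The excursions in the $S$-routing are interleaved with actuations inside $T\setminus S$, which change rotors there. We must check that reordering them, so that the restricted actuations outside $T$ form a legal routing from $\sigma'$, preserves legality. This holds because actuations inside $T$ never alter rotors outside $T$. Processing particles one at a time, each excursion outside $T$ depends only on rotors outside $T$ and on its entry point, so the restriction is a genuine legal routing.
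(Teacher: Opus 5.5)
Your proof is correct and is essentially the paper's argument, which just cites Lemma~\ref{lem:boundary-routing} and Lemma~\ref{lem:least-action}. You use the no-repeated-edge property to get $\sigma'\le\sigma_T$, run the outside-$T$ actuations of the $S$-routing in chronological order with all excursion particles placed at the start as a legal routing with sink $T$, and finish by least action. One small correction: no reordering is actually needed. Legality holds simply because rotors outside $T$ evolve identically, and pre-placing the later-injected particles only increases occupancy.
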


	\subsection{The passage time}
	As $G$ is countable, the event that the boundary routing of every nonempty
	finite set terminates is measurable.  If the graph and rotor
	mechanism are doubly periodic, this event is translation invariant.
	For $x,y\in V$, define
	\begin{equation}\label{eq:passage-definition}
		\tau(x,y)
		\coloneqq
		\begin{cases}
			\min\{n\in\Z_{\geq0}:y\in\Phi^n(\{x\})\},
			&\begin{gathered}
				\text{if the boundary routing of every nonempty}\\
				\text{finite set terminates},
			\end{gathered}\\
			d_G(x,y),&\text{otherwise.}
		\end{cases}
	\end{equation}
	In the first case, Proposition~\ref{prop:circuit-iterate} identifies
	$\tau(x,y)$ with the least $n$ for which the range through the first $n$
	circuits of the rotor walk started at $x$ contains $y$.  The second case
	defines $\tau$ on every rotor configuration while preserving the stationarity,
	measurability, and triangle inequality used below.
	
	\begin{proposition}[Passage time]\label{prop:passage}
		For all vertices $x$, $y$, and $z$,
		\begin{equation}\label{eq:triangle}
			\tau(x,z)\leq\tau(x,y)+\tau(y,z)\, ,
		\end{equation}
		and
		\begin{equation}\label{eq:passage-upper}
			\tau(x,y)\leq d_G(x,y)\, .
		\end{equation}
		If the boundary routing of every nonempty finite set terminates, then
		for every integer $n\geq0$,
		\begin{equation}\label{eq:passage-balls}
			A_n=\{x:\tau(o,x)\leq n\}\, .
		\end{equation}
	\end{proposition}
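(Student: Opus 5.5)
Split according to the two cases in \eqref{eq:passage-definition}.  If some nonempty finite set has a non-terminating boundary routing, then $\tau=d_G$, and \eqref{eq:triangle} is the triangle inequality for graph distance while \eqref{eq:passage-upper} is an equality; \eqref{eq:passage-balls} is not claimed.  So assume from now on that every nonempty finite set has a terminating boundary routing.  Then $\Phi$ is defined on all nonempty finite sets and maps them to finite sets: the complete routing is finite, so only finitely many vertices are actuated.  Hence every iterate $\Phi^n(S)$ is defined, and since $S\subseteq\Phi(S)$ these iterates increase in $n$.

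For \eqref{eq:passage-upper}, I will show that $\Phi(S)$ contains every neighbor of $S$.  Take $s\in S$ and $x\sim s$ with $x\notin S$.  The boundary routing starts with a particle at $x$, and a complete routing must remove it, so $x$ is actuated at least once and $x\in\Phi(S)$.  By induction, $\Phi^n(\{x\})$ contains the graph ball of radius $n$ about $x$, which gives $\tau(x,y)\leq d_G(x,y)$; in particular $\tau$ is finite.

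For \eqref{eq:triangle}, let $m=\tau(x,y)$ and $k=\tau(y,z)$.  Then $y\in\Phi^m(\{x\})$, so $\{y\}\subseteq\Phi^m(\{x\})$.  Proposition~\ref{prop:monotonicity}, applied $k$ times (each set involved is nonempty and finite with terminating boundary routing), gives
\[
\Phi^k(\{y\})\subseteq\Phi^k(\Phi^m(\{x\}))=\Phi^{m+k}(\{x\})\, .
\]
Since $z\in\Phi^k(\{y\})$, we get $\tau(x,z)\leq m+k$.

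For \eqref{eq:passage-balls}, the first step is to check that $T(n)<\infty$ for every $n$.  This holds for $n=0$, and Proposition~\ref{prop:circuit-iterate} together with the standing assumption shows that $T(n)<\infty$ implies $T(n+1)<\infty$ and $A_{n+1}=\Phi(A_n)$.  Induction then gives $A_n=\Phi^n(\{o\})$.  Because the iterates increase, $x\in\Phi^n(\{o\})$ holds if and only if $x\in\Phi^j(\{o\})$ for some $j\leq n$, that is, if and only if $\tau(o,x)\leq n$.

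I expect no real obstacle here.  The only care needed is with domains: making sure $\Phi$ is applied only to finite nonempty sets with terminating routings, which the standing assumption and the finiteness of $\Phi(S)$ guarantee.
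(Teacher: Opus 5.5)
Your proposal is correct and follows the paper's proof essentially step for step. You use the case split on the definition of $\tau$, the inclusion $S\cup\partial S\subseteq\Phi(S)$ iterated along a shortest path, monotonicity applied to $\{y\}\subseteq\Phi^m(\{x\})$, and Proposition~\ref{prop:circuit-iterate} to get $A_n=\Phi^n(\{o\})$. You also spell out points the paper leaves implicit, namely that $\Phi(S)$ is finite, that the iterates increase, and the induction giving $T(n)<\infty$.
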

	
	\begin{proof}
		If the boundary routing of some nonempty finite set does not terminate, then
		$\tau=d_G$, so the triangle inequality for $d_G$ proves \eqref{eq:triangle},
		and \eqref{eq:passage-upper} holds with equality.  Suppose instead that the
		boundary routing of every nonempty finite set terminates.  For every such $S$,
		set
		\[
			\partial S\coloneqq\{y\in V\setminus S:y\sim x\text{ for some }x\in S\}\, .
		\]
		Every vertex of $\partial S$
		initially carries a particle and is therefore actuated in a complete
		routing, so $S\cup\partial S\subseteq\Phi(S)$.  Iterating this inclusion
		along a shortest path from $x$ to $y$ proves \eqref{eq:passage-upper}.
		Set $m\coloneqq\tau(x,y)$ and
		$k\coloneqq\tau(y,z)$.  Since $\{y\}\subseteq\Phi^m(\{x\})$,
		Proposition~\ref{prop:monotonicity} gives
		\[
		\Phi^k(\{y\})
		\subseteq\Phi^k\bigl(\Phi^m(\{x\})\bigr)
		=\Phi^{m+k}(\{x\})\, .
		\]
		The left-hand side contains $z$, proving \eqref{eq:triangle}.  Finally,
		Proposition~\ref{prop:circuit-iterate} gives $A_n=\Phi^n(\{o\})$, and
		\eqref{eq:passage-definition} gives \eqref{eq:passage-balls}.
	\end{proof}
	
	\subsection{Live paths}\label{ssec:live-paths}
	
	Recall from Section~\ref{ssec:proof-overview} that a path is live when, at
	every internal vertex, its forward edge precedes the reverse of its incoming
	edge in the cyclic order beginning after the initial rotor.  The next lemma has two parts.  The first extracts a live
	path from a single boundary routing.  The second extracts, from the iterates
	of $\Phi$, a path at which the live condition fails at few vertices.
	
	\begin{lemma}[Extracting live paths]\label{lem:decreasing-positions}
		\leavevmode
		\begin{enumerate}[label=\textup{(\roman*)}]
			\item Let $S\subseteq V$ be nonempty and finite and let $y\notin S$.  If a
			particle in a one-particle-at-a-time boundary routing of $S$ visits $y$,
			counting its initial boundary edge as its first step, then
			there is a live path $x_0,x_1,\ldots,x_m=y$ such that $x_0\in S$ and
			$x_i\notin S$ for $1\leq i\leq m$.
			\item Let $n\geq1$, suppose that $\Phi^i(\{x\})$ is defined for
			$1\leq i\leq n$, and let $y\in\Phi^n(\{x\})$.  There is a path from
			$x$ to $y$, contained in $\Phi^n(\{x\})$, that is live at all but at most
			$n-1$ internal vertices.
		\end{enumerate}
	\end{lemma}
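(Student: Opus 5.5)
The plan is to prove part (i) by a first-entry (chronological tree) construction, and then to deduce part (ii) from (i) by induction on $n$.

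\textbf{Part (i).} Concatenate the one-particle-at-a-time boundary routing of $S$ into a single chronological sequence of edge traversals. The initial boundary edges $s\to x$ count as the traversals that introduce each particle. By Lemma~\ref{lem:boundary-routing}, no directed edge appears twice in this sequence. For every vertex $v\notin S$ visited before the particle reaches $y$, let $e(v)=(p(v)\to v)$ be the edge along which $v$ is \emph{first} entered, and let $t(v)$ be the time of that entry. Starting from $x_m=y$, repeatedly set $x_{i-1}=p(x_i)$. Since $t(p(v))<t(v)$ whenever $p(v)\notin S$, the vertices are distinct, and the procedure stops at a vertex $x_0\in S$ with $x_1,\dots,x_m\notin S$. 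Thus $x_0,\dots,x_m$ is a path from $S$ to $y$ of the required form.

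To verify liveness at an internal vertex $x_i$, note two facts.
\begin{itemize}
\item Before time $t(x_i)$, the vertex $x_i$ carried no particle, so it was never actuated. Consequently every departure from $x_i$ follows the cyclic order starting immediately after the \emph{initial} rotor.
\item By Lemma~\ref{lem:boundary-routing}, each outgoing edge is used at most once. Hence the order in which outgoing edges of $x_i$ are first used coincides with their position in that cyclic order.
\end{itemize}
The edge $x_i\to x_{i+1}$ is used at time $t(x_{i+1})$. So it suffices to show that $x_i\to x_{i-1}$ is not used before $t(x_{i+1})$.

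\textbf{Main obstacle.} I expect this last claim to be the main obstacle. Suppose $x_i\to x_{i-1}$ were used at some time $s<t(x_{i+1})$. I would try to derive a contradiction from the single-use property, by tracking returns of the particle. Each later departure from $x_{i-1}$ needs a fresh incoming edge, and the edge $x_{i-1}\to x_i$ has already been consumed at $t(x_i)$.

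If this direct argument does not close, I would fall back on the classical Angel--Holroyd argument. It maintains, throughout the routing, the invariant that the current rotors at visited vertices point along a tree oriented toward the current particle position. I would use the path in this tree, reversed, in place of the first-entry path. I would then check liveness by comparing, at each $x_i$, the last-used edge with the edge toward the parent.

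\textbf{Part (ii).} This follows by induction on $n$.

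For $n=1$, we have $S=\{x\}$. If $y=x$, the trivial path works. Otherwise $y\in\Phi(\{x\})\setminus\{x\}$ is actuated in a complete routing, and hence it is visited in a one-particle-at-a-time routing by Lemma~\ref{lem:boundary-routing}. Part (i) then gives a live path from $x_0=x$ to $y$, which has zero failures.

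For the inductive step, let $S=\Phi^{n-1}(\{x\})$, and let $y\in\Phi(S)$. If $y\in S$, the inductive hypothesis already gives a path inside $S\subseteq\Phi^n(\{x\})$ with at most $n-2$ failures. If $y\notin S$, part (i) gives a live path $x_0,\dots,x_m=y$ with $x_0\in S$ and all other vertices outside $S$. The inductive hypothesis gives a path from $x$ to $x_0$ inside $S$ with at most $n-2$ failures.

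Concatenating the two paths at $x_0$ yields a path, since the pieces meet only at $x_0$. The new path lies in $\Phi^n(\{x\})$, because the vertices visited by the routing belong to $\Phi(S)$. Its internal vertices are the old internal vertices, the junction $x_0$ (which contributes at most one new failure), and the internal vertices of the live piece. The total number of failures is therefore at most $(n-2)+1=n-1$.
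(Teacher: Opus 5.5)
Your part (ii) follows the paper's induction and is fine. Part (i), however, has a genuine gap: the claim you flag as the main obstacle is false, and the fallback fails too.

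Take vertices $s,u,v,c,w$ with edges $su,uv,uc,cv,vw$ (attach a ray at $w$ if you want $G$ infinite) and $S=\{s\}$. List each cyclic order starting from the edge just after the initial rotor: $(v,c,s)$ at $u$, $(u,w,c)$ at $v$, and $(v,u)$ at $c$. The single particle runs $s\to u\to v\to u\to c\to v\to w$, using each directed edge once. Your first-entry path is $s,u,v,w$. But $v\to u$ comes first in the order at $v$ (it is used before $v\to w$), so no path entering $v$ from $u$ is live at $v$.

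For the rotor (last-exit) tree, take vertices $s,a,u,c,v,w$, edges $sa,au,uc,uv,cv,vw$, and $S=\{s\}$. Use orders $(u,s)$ at $a$, $(c,v,a)$ at $u$, $(v,u)$ at $c$, and $(u,w,c)$ at $v$. The particle runs $s\to a\to u\to c\to v\to u\to v\to w$. On arrival the rotors at $a,u,c,v$ point to $u,v,v,w$, so the tree path is $s,a,u,v,w$. It again enters $v$ from its first-ranked neighbor $u$, so it is not live at $v$. Each rule does give a live path in the other example ($s,u,c,v,w$ and $s,a,u,c,v,w$ respectively), so neither chronological rule is correct in general. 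In both failures the chosen path uses an edge whose reverse was also traversed, and that is exactly what destroys liveness at the edge's head.

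The missing idea is to cancel such pairs.
\begin{enumerate}
\item Stop the routing at its first visit to $y$ and take all directed edges traversed so far, including the initial boundary edges. By Lemma~\ref{lem:boundary-routing} each occurs at most once.
\item Completed routes run from $S$ to $S$ and the current one runs from $S$ to $y$. So every vertex outside $S\cup\{y\}$ has as many incoming as outgoing traversed edges, while $y$ has one more incoming.
\item Delete every pair $u\to v$, $v\to u$ of which both were traversed. This preserves the balance, so the surviving edges contain a path from $S$ to $y$; truncate it after its last vertex in $S$.
\item At an internal vertex $x_i$, the edge $x_i\to x_{i+1}$ survived, hence was traversed. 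The edge $x_i\to x_{i-1}$ was never traversed, since otherwise it would have been deleted together with the surviving edge $x_{i-1}\to x_i$.
\end{enumerate}
Your observations already show that the traversed exits from $x_i$ form an initial segment of the cyclic order after the initial rotor, which then gives liveness. This path consists of traversed edges, so it lies in $\Phi(S)$; that is what your part (ii) needs when it concatenates. With this version of (i), your induction goes through unchanged.
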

	
	\begin{proof}
		For part~\textup{(i)}, stop the routing at its first visit to $y$.  By
		Lemma~\ref{lem:boundary-routing} no directed edge is traversed twice, so the
		edges traversed out of a vertex $v\notin S$ are an initial segment of the
		cyclic order beginning immediately after the initial rotor at $v$.
		
		Consider the completed particle routes and the current route, counting the
		initial boundary edge of each.  Every completed route runs from $S$ to $S$,
		while the current route runs from $S$ to $y$.  Hence every vertex outside
		$S\cup\{y\}$ has as many
		incoming as outgoing traversals, while $y$ has one more incoming than outgoing.
		Deleting every pair of oppositely directed traversals preserves this, so the
		surviving edges contain a path from $S$ to $y$.  Truncating that path after its
		last vertex in $S$ gives $x_0,\ldots,x_m=y$ with $x_0\in S$ and $x_i\notin S$
		for $1\leq i\leq m$.
		
		Let $x_i$ be an internal vertex.  The edge $x_i\to x_{i+1}$ was traversed,
		since it survived the deletion.  The edge $x_i\to x_{i-1}$ was never traversed,
		since otherwise it would have been deleted together with $x_{i-1}\to x_i$,
		which survived.  The traversed edges out of $x_i$ form an initial segment of
		the cyclic order, so $x_i\to x_{i+1}$ precedes $x_i\to x_{i-1}$ in that order.
		This is the live condition at $x_i$.
		
		For part~\textup{(ii)}, let $j$ be the least integer with
		$y\in\Phi^j(\{x\})$ and induct on $j$.  The case $j=0$ is trivial, and
		part~\textup{(i)} with $S=\{x\}$ settles $j=1$.
		
		Let $j\geq2$ and set $S\coloneqq\Phi^{j-1}(\{x\})$, so that $y\in\Phi(S)$
		and, by minimality of $j$, $y\notin S$.  A complete one-particle-at-a-time
		boundary routing of $S$ has a particle that visits $y$, so part~\textup{(i)}
		gives a live path from a vertex $z\in S$ to $y$ that meets $S$ only at $z$ and
		lies in $\Phi^j(\{x\})$.  The inductive path from $x$ to $z$ lies in $S$, so
		the two meet only at $z$ and concatenate to a path from $x$ to $y$ inside
		$\Phi^j(\{x\})$.  The live condition fails at most $j-2$ times on the
		inductive path, never on the live path, and possibly once at the junction $z$,
		hence at most $j-1\leq n-1$ times in all.
	\end{proof}
	
	From nonexistence of infinite live paths, we deduce recurrence of the walk.
	
	\begin{proposition}[Live paths and recurrence]\label{prop:live-recurrence}
		If $G$ contains no infinite live path, then the boundary routing of every
		nonempty finite set terminates, $T(n)<\infty$ for every $n$, and the rotor
		walk is recurrent.
	\end{proposition}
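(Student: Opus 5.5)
We prove the three assertions in order, deriving each from the one before.

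\emph{Termination.} Fix a nonempty finite $S$ and run a one-particle-at-a-time boundary routing of $S$. By Lemma~\ref{lem:boundary-routing}, the routing is finite if and only if the boundary routing of $S$ terminates. So suppose some particle never enters $S$. By Lemma~\ref{lem:boundary-routing}, no directed edge is traversed twice. Since $G$ is locally finite, each vertex is visited at most $\deg$ times, so this particle visits infinitely many distinct vertices. For each such vertex $y\notin S$, Lemma~\ref{lem:decreasing-positions}(i) gives a live path $x_0,\ldots,x_m=y$ with $x_0\in S$ and $x_i\notin S$ for $i\ge1$.

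There are infinitely many such live paths, their endpoints are distinct, and their starting vertices lie in the finite set $S$. Since $G$ is locally finite, König's lemma applies: pick a vertex of $S$ that starts infinitely many of the paths, then a neighbor that is their second vertex for infinitely many, and continue. This diagonal extraction produces an infinite sequence $x_0,x_1,\ldots$. Its vertices are distinct because it is an increasing union of prefixes of paths. It is live at every internal vertex $x_i$, because liveness at $x_i$ depends only on $x_{i-1},x_i,x_{i+1}$ and the initial rotor at $x_i$, and it holds on the chosen paths for which $x_i$ is internal. This gives an infinite live path, contradicting the hypothesis. Hence the boundary routing of every nonempty finite set terminates. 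The one point to check is that being internal is preserved: each chosen prefix is extended strictly further by the infinitely many surviving paths, so every $x_i$ with $i\ge1$ is internal in those paths.

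\emph{Finiteness of $T(n)$.} Induct on $n$. We have $T(0)=0$ and $A_0=\{o\}$. If $T(n)<\infty$, then $A_n$ is finite and nonempty, its boundary routing terminates by the previous step, and Proposition~\ref{prop:circuit-iterate} gives $T(n+1)<\infty$.

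\emph{Recurrence.} Since $T(n)<\infty$ for all $n$, the walk returns to $o$ infinitely often. By the proof of Proposition~\ref{prop:passage}, $A_{n+1}\supseteq A_n\cup\partial A_n$, so $A_n$ contains the ball of radius $n$ about $o$. Every vertex $v$ lies in $A_n$ for all large $n$. By Lemma~\ref{lem:one-circuit}, the walk departs from $v$ exactly $\deg(v)\ge1$ times during each subsequent circuit, so $v$ is visited infinitely often. The main obstacle is the compactness step in the termination argument; everything else follows directly from the earlier results.
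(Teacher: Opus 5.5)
Your proof is correct and follows the paper's route: the termination step is the same K\"onig's-lemma extraction from the live paths supplied by Lemma~\ref{lem:decreasing-positions}\textup{(i)}, and finiteness of $T(n)$ is the same induction via Proposition~\ref{prop:circuit-iterate}. In the extraction you work with infinitely many distinct endpoints, while the paper uses a forest of live paths with nodes at every depth, which is the same compactness argument. The one divergence is the last step: the paper cites Holroyd--Propp (visiting one vertex infinitely often forces visiting all of them) and Angel--Holroyd, whereas you derive recurrence directly from $A_{n+1}\supseteq A_n\cup\partial A_n$ and Lemma~\ref{lem:one-circuit}, which is valid and keeps the argument self-contained.
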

	
	\begin{proof}
		Suppose that the boundary routing of some nonempty finite set $S$ does not
		terminate, and choose a one-particle-at-a-time boundary routing of $S$.  By
		Lemma~\ref{lem:boundary-routing} it is infinite and traverses no directed edge
		twice, and a finite set contains only finitely many directed edges, so the
		routing leaves every finite set.  Given an integer $R\geq1$, some particle
		therefore visits a vertex at graph distance $R$ from $S$, and
		Lemma~\ref{lem:decreasing-positions}\textup{(i)} converts that visit into a
		live path of length at least $R$ that starts in $S$ and never returns to $S$.
		
		All finite positive-length live paths that start in $S$ and otherwise avoid $S$
		form a forest
		in which the parent of a path is obtained by deleting its last vertex.  Its
		roots are the paths with a single edge, one for
		each directed edge leaving $S$, so there are finitely many; each node has
		finitely many children because $G$ is locally finite.  By the
		previous paragraph the forest has nodes at every depth, so some root has
		descendants at every depth.  K\"onig's lemma gives an infinite branch there,
		whose union is an infinite live path, contrary to hypothesis.
		
		Thus the boundary routing of every nonempty finite set terminates, and
		Proposition~\ref{prop:circuit-iterate} gives $T(n)<\infty$ for every $n$ by
		induction.  Hence the walk returns to $o$ infinitely often.  A rotor walk that
		visits one vertex infinitely often visits every vertex infinitely often
		\citep[Lemma~6]{HolroydPropp2010}, and recurrence does not depend on the
		starting vertex \citep[Theorem~1]{AngelHolroyd2012}.
	\end{proof}

	\section{From an almost-live-path estimate to shape and range}
	\label{sec:recurrence}
	
	This section derives Theorem~\ref{thm:main} and
	Proposition~\ref{prop:small-perturbations} from one estimate: for some
	$\eta>0$, the probability that an almost-live path reaches distance $R$ tends
	to zero uniformly in its first edge.  Proposition~\ref{prop:path-reduction}
	converts this estimate into recurrence and a uniform $o(1)$ lower-tail
	estimate for the passage time.
	Propositions~\ref{prop:passage-limit}--\ref{prop:circuit-clock} then give the
	limit shape and range asymptotics.  Finally,
	Lemma~\ref{lem:block-live-paths} reduces the estimate to one finite-scale
	crossing bound and shows that the bound persists under small perturbations of
	the rotor laws.  Sections~\ref{sec:killed-walk} and~\ref{sec:square} verify
	this bound in the two cases of Theorem~\ref{thm:main}.
	
	\subsection{Recurrence, shape, and range}
	
	For a directed edge $e=u\to v$, an integer $R\geq1$, and $\eta>0$, define the
	almost-live-path event
	\begin{equation}\label{eq:almost-live-path-event}
		\mathcal L_\eta(e,R)
		\coloneqq
		\left\{
		\begin{array}{c}
			\text{some path starts with $e$ and reaches graph distance $R$ from $u$,}\\
			\text{and the live condition fails at no more than $\eta R$ internal vertices}
		\end{array}
		\right\}\, .
	\end{equation}
	
	\begin{proposition}[Recurrence, shape, and range from live paths]\label{prop:path-reduction}
		Let $G$ be an infinite connected graph of bounded degree with an arbitrary
		rotor mechanism, and let $\P$ be a law for the initial rotors.
		Suppose that, for some $\eta>0$,
		\begin{equation}\label{eq:criterion-path-hypothesis}
			\lim_{R\to\infty}\sup_{u\to v}
			\P\{\mathcal L_\eta(u\to v,R)\}=0\, .
		\end{equation}
		Then the following statements hold.
		\begin{enumerate}[label=\textup{(\roman*)}]
			\item Almost surely, the boundary routing of every nonempty finite set
			terminates and the walk is recurrent.
			\item Writing $\tau$ for the passage time defined in
			\eqref{eq:passage-definition}, there is $a>0$ such that
			\begin{equation}\label{eq:criterion-passage}
				\lim_{R\to\infty}
				\sup_{\substack{x,y\in V\\ d_G(x,y)\geq R}}
				\P\{\tau(x,y)\leq ad_G(x,y)\}=0\, .
			\end{equation}
			\item If, in addition, $G$ and the rotor mechanism are doubly periodic and the
			rotor law is invariant and ergodic under the translation lattice, then there
			exist a deterministic compact convex set $B\subset\R^2$ containing the origin in its interior
			and deterministic constants $\kappa,c_*>0$ such that, almost surely,
			\[
			n^{-1}A_n\longrightarrow B,
			\qquad
			t^{-1/3}R_t\longrightarrow\kappa B,
			\qquad
			|R_t|=c_* t^{2/3}+o(t^{2/3})\, ,
			\]
			where $A_n$ is the range through the first $n$ circuits, $R_t$ is the range at
			time $t$, and the set limits are in Hausdorff distance.
		\end{enumerate}
	\end{proposition}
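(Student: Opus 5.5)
We prove the three parts in order; (iii) is the main work.

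\emph{Part (i).} It suffices, by Proposition~\ref{prop:live-recurrence}, to show that almost surely there is no infinite live path. An infinite live path has a first edge $u\to v$. For every $R$ it reaches graph distance $R$ from $u$, since it consists of infinitely many distinct vertices in a locally finite graph. Its live condition fails nowhere, so $\mathcal L_\eta(u\to v,R)$ holds for every $R$. Fixing $u\to v$ and letting $R\to\infty$ in \eqref{eq:criterion-path-hypothesis} shows that this event has probability zero. There are countably many directed edges, so a union bound finishes (i).

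\emph{Part (ii).} On the almost sure event of (i), the passage time $\tau$ is given by the first case of \eqref{eq:passage-definition}. Suppose $\tau(x,y)=n\le a\,d_G(x,y)$ and set $R=d_G(x,y)$. Lemma~\ref{lem:decreasing-positions}(ii) gives a path from $x$ to $y$ whose live condition fails at most $n-1<aR$ times. This path starts with some edge $x\to v$ and reaches distance $R$ from $x$. Taking $a=\eta$ and a union bound over the at most $\Delta$ first edges, where $\Delta$ is the maximum degree, gives
\[
\P\{\tau(x,y)\le a\,d_G(x,y)\}\le \Delta\sup_{u\to v}\P\{\mathcal L_\eta(u\to v,R)\}.
\]
This bound is uniform in $x,y$ and tends to zero as $R\to\infty$, which is \eqref{eq:criterion-passage}.

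\emph{Part (iii).} We use a subadditive shape theorem. Choose lattice representatives and define $\tau$ between lattice points $z\in\Lambda$ via a fixed vertex $o+z$. By \eqref{eq:triangle}, \eqref{eq:passage-upper}, stationarity and ergodicity, Kingman's theorem gives $\tau(o,o+nz)/n\to\mu(z)$ almost surely for each $z$. Here $\mu$ is a seminorm, extended to $\mathbb Q$-multiples and then to $\R^2$ by homogeneity and the Lipschitz bound $\mu(z)\le C|z|$. Part (ii) forces $\mu(z)\ge a\,c|z|>0$. Indeed, if $\tau(o,o+nz)/n\to\mu(z)<ac|z|$ almost surely, then $\P\{\tau\le a\,d_G\}\to1$, contradicting \eqref{eq:criterion-passage}; here we use $d_G\ge c|\cdot|$ on a periodic graph. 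So $\mu$ is a norm, and $B=\{\mu\le1\}$ is convex and compact with $0$ in its interior.

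Upgrading to uniform convergence of $\tau(o,x)/\mu(x)$ over all directions is the standard Cox--Durrett step. We combine directional limits along a finite net of rational directions with the Lipschitz bound \eqref{eq:passage-upper}. The upper bound (the ball contains $(1-\varepsilon)nB$) is routine. The lower bound, that no far point has anomalously small $\tau$, is where I expect the main obstacle. Kingman's theorem gives only almost sure limits along rays, while the shape theorem needs control of all points at once. Cox--Durrett usually handles this with moment bounds, but here the Lipschitz bound $\tau\le d_G$ supplies them, so the standard argument should go through once $\mu>0$ is known. With \eqref{eq:passage-balls}, this yields $n^{-1}A_n\to B$.

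\emph{Converting circuits to time.} Lemma~\ref{lem:one-circuit} shows that circuit $n+1$ takes exactly $\sum_{x\in A_n}\deg(x)$ steps. Since $A_n\approx nB$, we get $|A_n|\sim \lambda\,\mathrm{area}(B)\,n^2$, where $\lambda$ is the vertex density. Then $\sum_{x\in A_n}\deg(x)\sim \bar d\lambda\,\mathrm{area}(B)n^2$, with $\bar d$ the average degree per unit area. Summing, $T(n)\sim \tfrac13\bar d\lambda\,\mathrm{area}(B)\,n^3$. For $T(n)\le t<T(n+1)$ we have $A_n\subseteq R_t\subseteq A_{n+1}$, and $n\sim (t/K)^{1/3}$ with $K=\tfrac13\bar d\lambda\,\mathrm{area}(B)$. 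This gives $t^{-1/3}R_t\to K^{-1/3}B$ and $|R_t|/t^{2/3}\to \lambda\,\mathrm{area}(B)K^{-2/3}$. The convergence $|A_n|/n^2\to\lambda\,\mathrm{area}(B)$ follows from Hausdorff convergence, since the boundary contributes $O(n)$ points.
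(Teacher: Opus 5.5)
Your outline is the paper's proof. Parts (i) and (ii) match it essentially line for line; taking $a=\eta$ is as good as the paper's $\min\{\eta,1/2\}$. Part (iii) has the same structure: Kingman's theorem, positivity of $\mu$ from (ii), a uniform shape theorem, and the circuit clock.

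One step is false as written. Lemma~\ref{lem:one-circuit} does not say that circuit $n+1$ takes exactly $\sum_{x\in A_n}\deg(x)$ steps. The walk also departs at least once from every newly found vertex of $A_{n+1}\setminus A_n$; already for $n=0$ on $\Z^2$ one has $T(1)\geq 8>4$. What the lemma gives is the squeeze \eqref{eq:universal-clock-squeeze}. Its upper bound $\sum_{x\in A_{n+1}}\deg(x)$ holds because no directed edge is used twice in a circuit and every departure is from a vertex of $A_{n+1}$. Both bounds are $\beta n^2+o(n^2)$, so your formulas for $T(n)$, $\kappa$ and $c_*$ survive once you use the squeeze instead.

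A few smaller points.
\begin{itemize}
\item Nothing makes $\tau$ symmetric, so $\mu$ is only positively homogeneous and subadditive, not a seminorm or norm. Likewise $B$ need not be centrally symmetric, though its convexity and compactness are unaffected.
\item Ergodicity under $\Lambda$ does not give ergodicity under translation by a single $z$. Kingman's limit is therefore a priori random, and it is \eqref{eq:passage-four-endpoints} that makes it $\Lambda$-invariant and hence deterministic.
\item The uniform step you flag as the main obstacle is easy in both directions. \eqref{eq:passage-four-endpoints} is a deterministic two-sided Lipschitz bound, so directional limits along a finite net of lattice directions already give uniform convergence. Positivity $\mu>0$ is needed only to make $B$ bounded.
\item $|A_n|/n^2\to\lambda\,\mathrm{area}(B)$ needs the sandwich \eqref{eq:shape-sandwich}. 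Hausdorff convergence alone does not control point counts.
\end{itemize}
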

	
	\begin{proof}[Proof of \textup{(i)} and \textup{(ii)}]
		An infinite live path beginning with a directed edge $e$ belongs to
		$\mathcal L_\eta(e,R)$ for every $R$.  Thus
		\eqref{eq:criterion-path-hypothesis}, followed by a countable union over the
		directed edges, rules out all infinite live paths almost surely.
		Proposition~\ref{prop:live-recurrence} then implies~\textup{(i)}.
		
		For part~\textup{(ii)}, set $a\coloneqq\min\{\eta,1/2\}$ and fix distinct
		vertices $x,y$.  By
		Lemma~\ref{lem:decreasing-positions}\textup{(ii)}, on the probability-one
		event supplied by part~\textup{(i)},
		\begin{equation}\label{eq:fast-passage-inclusion}
			\{\tau(x,y)\leq ad_G(x,y)\}
			\subseteq
			\bigcup_{z\sim x}\mathcal L_\eta(x\to z,d_G(x,y))\, .
		\end{equation}
		Indeed, the lemma supplies a path from $x$ to $y$ with at most
		$\tau(x,y)-1\leq\eta d_G(x,y)$ failures of the live condition.  The bounded
		degree of $G$, \eqref{eq:criterion-path-hypothesis}, and a union bound give
		\eqref{eq:criterion-passage}.
	\end{proof}
	
	Parts~\textup{(i)} and~\textup{(ii)} do not require periodicity of the graph.
	Part~\textup{(iii)} does, and the rest of this subsection is devoted to its
	proof, and we assume the additional hypotheses of
	Proposition~\ref{prop:path-reduction}\textup{(iii)}: the graph and the rotor
	mechanism are doubly periodic, and the rotor law is invariant and ergodic
	under the translation lattice, which we write as $\Lambda$. We also identify the vertex $o$ with the origin.

	The periodic placement makes graph distance and Euclidean distance comparable:
	there is a constant $C<\infty$ such that
	\begin{equation}\label{eq:distance-comparison}
		C^{-1}|x-y|-C
		\leq d_G(x,y)
		\leq C|x-y|+C\, ,
	\end{equation}
	for all vertices $x$ and $y$.  The symmetry of $d_G$ and
	Proposition~\ref{prop:passage} also give
	\begin{equation}\label{eq:passage-four-endpoints}
		\bigl|\tau(x_1,y_1)-\tau(x_2,y_2)\bigr|
		\leq d_G(x_1,x_2)+d_G(y_1,y_2)\, ,
	\end{equation}
	for all vertices $x_1,x_2,y_1,y_2$. 
	
	\begin{proposition}\label{prop:passage-limit}
		There is a deterministic continuous subadditive function
		$\mu:\R^2\to[0,\infty)$ such that $\mu(\theta x)=\theta\mu(x)$ for every
		$\theta\geq0$ and
		\begin{equation}\label{eq:uniform-passage}
			\lim_{R\to\infty}
			\sup_{\substack{x\in V\\ |x|\geq R}}
			\frac{|\tau(o,x)-\mu(x)|}{|x|}=0\, ,
		\end{equation}
		almost surely.  The function $\mu$ does not depend on the base point $o$.
	\end{proposition}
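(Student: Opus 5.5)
This is the standard first-passage-percolation shape theorem argument: Kingman's subadditive ergodic theorem along rational directions, extension to a continuous norm-like function, and an upgrade to uniform convergence via the Lipschitz bound \eqref{eq:passage-four-endpoints}. Note that $\tau$ is always defined (it is $d_G$ off the good event), satisfies the triangle inequality \eqref{eq:triangle}, is bounded by $d_G$ by \eqref{eq:passage-upper}, and is stationary under $\Lambda$. Stationarity holds because the graph, the mechanism, the rotor law, and the event that every boundary routing terminates are all $\Lambda$-invariant, so $\tau(x+z,y+z)$ is obtained from $\tau(x,y)$ by a measure-preserving shift.

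\textbf{Step 1: fixed lattice directions.} Fix $z\in\Lambda$ and set $X_{m,n}\coloneqq\tau(mz,nz)$ for $0\le m<n$. The triangle inequality gives $X_{0,n}\le X_{0,m}+X_{m,n}$. Stationarity of the sequence $(X_{m,m+k})$ under the shift by $z$ follows from $\Lambda$-invariance, and integrability from $0\le X_{0,1}\le d_G(0,z)$. Kingman's theorem then gives $\tau(0,nz)/n\to\mu(z)$ almost surely and in $L^1$.

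The limit is invariant under shift by $z$. Ergodicity is assumed only for the whole lattice $\Lambda$, not necessarily for the single shift by $z$. To get a deterministic limit, I would use the standard trick: the limit $\mu(z)$ is invariant under every translation $w\in\Lambda$. Indeed, by \eqref{eq:passage-four-endpoints},
\[
|\tau(w,w+nz)-\tau(0,nz)|\le 2d_G(0,w),
\]
so after dividing by $n$ the shifted limit coincides with the unshifted one. Hence $\mu(z)$ is $\Lambda$-invariant, and therefore deterministic by ergodicity of $\Lambda$.

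\textbf{Step 2: extension to $\R^2$.} Subadditivity of the expectations gives $\mu(z+w)\le\mu(z)+\mu(w)$ and $\mu(kz)=k\mu(z)$ for $z,w\in\Lambda$. By \eqref{eq:distance-comparison}, $\mu(z)\le C|z|$, so $|\mu(z)-\mu(w)|\le C|z-w|$. Extend $\mu$ to $\Q\Lambda$ by homogeneity, and then by Lipschitz continuity to all of $\R^2$. The extension is continuous, subadditive, and positively homogeneous. Independence from the base point $o$ follows from the same estimate: changing $o$ moves $\tau$ by at most a constant.

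\textbf{Step 3: uniform convergence \eqref{eq:uniform-passage}, the main obstacle.} First I would get almost-sure convergence simultaneously for the countably many $z\in\Lambda$. Then I would suppose, for contradiction, that there exist $x_k$ with $|x_k|\to\infty$ and
\[
|\tau(o,x_k)-\mu(x_k)|\ge\epsilon|x_k|.
\]
Pass to a subsequence with $x_k/|x_k|\to\theta$. Choose $z\in\Lambda$ with $z/|z|$ close to $\theta$, and $n_k$ with $n_kz$ within $\delta|x_k|$ of $x_k$; the vertex nearest $x_k$ is within bounded distance by periodicity. Then \eqref{eq:passage-four-endpoints} and \eqref{eq:distance-comparison} give
\[
|\tau(o,x_k)-\tau(o,n_kz)|\le C\delta|x_k|+C,
\]
and Lipschitz continuity gives $|\mu(x_k)-n_k\mu(z)|\le C\delta|x_k|$. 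Since $\tau(o,n_kz)/n_k\to\mu(z)$, taking $\delta$ small relative to $\epsilon$ yields a contradiction.

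The delicate point is that the direction approximation uses a single direction $z$ for the whole subsequence. I would handle this by a compactness argument over a finite $\delta$-net of lattice directions, rather than by choosing $z$ depending on $k$. With a finite net, the countable intersection of almost-sure events suffices, and the argument closes.
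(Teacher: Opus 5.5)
Your proposal is correct and is essentially the paper's argument: Kingman's theorem along each $z\in\Lambda$, determinism of the limit from \eqref{eq:passage-four-endpoints} together with ergodicity under $\Lambda$, extension to $\R^2$ by homogeneity and Lipschitz continuity, and the upgrade to \eqref{eq:uniform-passage} via the bound $\tau\leq d_G$. You simply write out what the paper calls ``the usual argument''; the finite $\delta$-net in Step~3 is an optional reformulation rather than a necessity, since on the full-measure event where all countably many ray limits exist you may choose $z$ from the limiting direction $\theta$ of the subsequence.
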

	
	\begin{proof}
		Set $\mu(0)\coloneqq0$. Fix nonzero $z\in\Lambda$.  The stationary
		subadditive array
		$\{\tau(o+mz,o+nz):0\leq m<n\}$ has an integrable linear bound by
		\eqref{eq:passage-upper} and \eqref{eq:distance-comparison}.  The
		subadditive ergodic theorem \citep[Theorems~3 and~5]{Kingman1968} gives,
		almost surely and in $L^1$, simultaneously for every base point and every
		$z\in\Lambda$, an a priori random limit
		\begin{equation}\label{eq:directional-limit}
			\mu(z)\coloneqq
			\lim_{n\to\infty}\frac{\tau(o,o+nz)}n\, .
		\end{equation}
		The usual argument, using the triangle inequality and
		\eqref{eq:passage-four-endpoints}, extends $\mu$ to the stated deterministic
		function and gives \eqref{eq:uniform-passage}.
	\end{proof}

	\begin{proposition}[Shape of the passage balls]\label{prop:circuit-shape}
		Let $\mu$ be the passage function from
		Proposition~\ref{prop:passage-limit}.  Suppose that the passage-ball identity
		\eqref{eq:passage-balls} holds almost surely for every $n\geq0$ and that
		$\min_{|u|=1}\mu(u)>0$.  Define
		\begin{equation}\label{eq:limit-shape-body}
			B\coloneqq\{x\in\R^2:\mu(x)\leq1\}\, .
		\end{equation}
		Then $B$ is compact and convex and contains the origin in its interior.
		Almost surely,
		\begin{equation}\label{eq:circuit-shape}
			n^{-1}A_n\longrightarrow B\, ,
		\end{equation}
		in Hausdorff distance.  Almost surely, for every $\varepsilon\in(0,1)$,
		\begin{equation}\label{eq:shape-sandwich}
			(1-\varepsilon)nB\cap V
			\subseteq A_n
			\subseteq(1+\varepsilon)nB\cap V\, ,
		\end{equation}
		for every sufficiently large $n$.
	\end{proposition}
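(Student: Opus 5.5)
The plan is to follow the standard shape-theorem argument from first-passage percolation, using the uniform convergence \eqref{eq:uniform-passage} together with the passage-ball identity \eqref{eq:passage-balls}.

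\emph{Properties of $B$.} By Proposition~\ref{prop:passage-limit}, $\mu$ is continuous, subadditive and positively homogeneous, so $\mu$ is convex: for $\lambda\in[0,1]$ we have $\mu(\lambda x+(1-\lambda)y)\leq\lambda\mu(x)+(1-\lambda)\mu(y)$. Hence $B$ is convex and closed. Put $m\coloneqq\min_{|u|=1}\mu(u)>0$. Homogeneity gives $\mu(x)\geq m|x|$, so $B$ lies in the closed ball of radius $1/m$ and is compact. Continuity on the unit circle gives $M\coloneqq\max_{|u|=1}\mu(u)<\infty$. Then $\mu(x)\leq M|x|$, so the ball of radius $1/M$ is contained in $B$, and the origin is an interior point.

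\emph{The sandwich \eqref{eq:shape-sandwich}.} Fix $\varepsilon\in(0,1)$ and work on the almost sure event of \eqref{eq:uniform-passage} and \eqref{eq:passage-balls}. Pick $\delta>0$ small relative to $\varepsilon m$ and $\varepsilon/M$, and take $R$ so that $|\tau(o,x)-\mu(x)|\leq\delta|x|$ whenever $|x|\geq R$.

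For the inner inclusion, let $x\in(1-\varepsilon)nB\cap V$. If $|x|\geq R$, then
\[
\tau(o,x)\leq\mu(x)+\delta|x|\leq(1-\varepsilon)n+\delta(1-\varepsilon)n/m\leq n
\]
once $\delta\leq\varepsilon m$. If $|x|<R$, then by \eqref{eq:passage-upper} and \eqref{eq:distance-comparison} we have $\tau(o,x)\leq CR+C\leq n$ for large $n$. In both cases $x\in A_n$ by \eqref{eq:passage-balls}.

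For the outer inclusion, let $x\in A_n$, so $\tau(o,x)\leq n$. Points with $|x|<R$ lie in $(1+\varepsilon)nB$ for large $n$, since $B$ contains a ball around the origin. If $|x|\geq R$, then $\mu(x)\leq\tau(o,x)+\delta|x|$. Combining this with $|x|\leq\mu(x)/m$ gives $\mu(x)(1-\delta/m)\leq n$, so $\mu(x)\leq(1+\varepsilon)n$ for $\delta$ small. Thus $x\in(1+\varepsilon)nB$.

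Since $\varepsilon$ ranges over a countable set, say $\varepsilon=1/k$, and any larger $\varepsilon$ follows from a smaller one, the statement holds almost surely simultaneously for all $\varepsilon$.

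\emph{Hausdorff convergence.} The outer inclusion gives $n^{-1}A_n\subseteq(1+\varepsilon)B$, and this set lies within distance $\varepsilon/m$ of $B$. For the other direction, every point of $B$ is within $\varepsilon/m$ of a point $p\in(1-\varepsilon)B$. The set $n(1-\varepsilon)B$ contains, near $np$, a vertex within bounded Euclidean distance $C$ by periodicity, since $V$ is $\Lambda$-invariant with finitely many orbits. Here we may need to shrink $(1-\varepsilon)$ slightly, to $(1-2\varepsilon)$, using that $B$ contains a ball, so that the nearby vertex stays inside. After rescaling, this vertex is within $C/n$ of $p$ and lies in $n^{-1}A_n$. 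Hence the Hausdorff distance is $O(\varepsilon)+O(1/n)$, which proves \eqref{eq:circuit-shape}.

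The main technical point is this last step: checking that lattice points fill $(1-\varepsilon)nB$ densely enough. The other steps are routine bookkeeping of the constants $m$ and $M$.
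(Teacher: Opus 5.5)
Your proof is correct and follows the same route as the paper: the bounds $m|x|\leq\mu(x)\leq M|x|$ give compactness, convexity and an interior ball, and comparing $\tau(o,\cdot)$ with $\mu$ through \eqref{eq:uniform-passage} and \eqref{eq:passage-balls} yields the sandwich \eqref{eq:shape-sandwich}, from which the Hausdorff convergence follows. You make explicit several details the paper's short proof leaves implicit: the choice $\delta\leq\varepsilon m$, the cases $|x|<R$, and the lattice-density step using $(1-2\varepsilon)B$ and $\mu(v-np)\leq MC$.
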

	
	\begin{proof}
		Homogeneity and continuity give $\lambda|x|\leq\mu(x)\leq C|x|$ with
		$\lambda\coloneqq\min_{|u|=1}\mu(u)>0$, so $B$ is compact and contains a
		neighborhood of the origin, and subadditivity makes it convex.  Comparing
		$\tau(o,\cdot)$ with $\mu$ through \eqref{eq:uniform-passage} and applying
		\eqref{eq:passage-balls} gives \eqref{eq:shape-sandwich} for all large $n$,
		from which \eqref{eq:circuit-shape} follows.
	\end{proof}
	
	It remains to convert circuit number into time.
	
	\begin{proposition}[Circuit clock]
		\label{prop:circuit-clock}
		Suppose $T(n)<\infty$ for every $n$ for a rotor walk on a connected locally
		finite graph.  Then, for every integer $n\geq0$,
		\begin{equation}\label{eq:universal-clock-squeeze}
			\sum_{x\in A_n}\deg(x)
			\leq T(n+1)-T(n)
			\leq\sum_{x\in A_{n+1}}\deg(x)\, .
		\end{equation}
		Suppose that, for some $\alpha,\beta>0$,
		\[
		|A_n|=\alpha n^2+o(n^2),
		\qquad
		\sum_{x\in A_n}\deg(x)=\beta n^2+o(n^2)\, .
		\]
		Then
		\[
		T(n)=\frac \beta3n^3+o(n^3),
		\qquad
		\lim_{t\to\infty}\frac{|R_t|}{t^{2/3}}
		=\alpha\left(\frac3\beta\right)^{2/3}\, .
		\]
		If
		$n^{-1}A_n\to B$ in Hausdorff distance for a compact set $B$, then
		\[
		t^{-1/3}R_t\longrightarrow\left(\frac3\beta\right)^{1/3}B\, ,
		\]
		in Hausdorff distance.
	\end{proposition}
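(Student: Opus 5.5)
The squeeze \eqref{eq:universal-clock-squeeze} comes straight from Lemma~\ref{lem:one-circuit}, and the asymptotics then follow by summation and inversion.

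\emph{The squeeze.} Since $T(n+1)<\infty$, the walk departs exactly $\deg(x)$ times from every $x\in A_n$ during $[T(n),T(n+1))$. Each time step is one departure, so $T(n+1)-T(n)\geq\sum_{x\in A_n}\deg(x)$. For the upper bound, every departure in this interval is from a vertex visited by time $T(n+1)$, hence from a vertex of $A_{n+1}$. By Lemma~\ref{lem:one-circuit} no directed edge is traversed twice in the interval, so each $x$ contributes at most $\deg(x)$ departures. This gives $T(n+1)-T(n)\leq\sum_{x\in A_{n+1}}\deg(x)$.

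\emph{The time asymptotics.} Both bounds in the squeeze are $\beta n^2+o(n^2)$. Summing over $k<n$ gives $T(n)=\sum_{k<n}(\beta k^2+o(k^2))=\frac\beta3 n^3+o(n^3)$. For a given $t$, let $n=n(t)$ be the index with $T(n)\leq t<T(n+1)$. Then $n(t)=(3t/\beta)^{1/3}(1+o(1))$, because $T(n+1)/T(n)\to1$. Monotonicity of the range gives
\[
A_n=R_{T(n)}\subseteq R_t\subseteq R_{T(n+1)}=A_{n+1}\, .
\]
Hence $|R_t|$ is squeezed between $\alpha n^2+o(n^2)$ and $\alpha(n+1)^2+o(n^2)$. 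Substituting $n\sim(3t/\beta)^{1/3}$ gives $|R_t|/t^{2/3}\to\alpha(3/\beta)^{2/3}$.

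\emph{Hausdorff convergence.} Use the same sandwich $A_n\subseteq R_t\subseteq A_{n+1}$. Since $n^{-1}A_n\to B$ and $B$ is compact, we also have $n^{-1}A_{n+1}=\frac{n+1}{n}\cdot(n+1)^{-1}A_{n+1}\to B$. So both $n^{-1}A_n$ and $n^{-1}A_{n+1}$ lie within $\varepsilon$ of $B$ in Hausdorff distance for large $n$. A set sandwiched between two sets $K_1\subseteq K\subseteq K_2$, each $\varepsilon$-close to $B$, is itself $\varepsilon$-close to $B$. Indeed, every point of $K\subseteq K_2$ is within $\varepsilon$ of $B$, and every point of $B$ is within $\varepsilon$ of $K_1\subseteq K$. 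Therefore $n^{-1}R_t\to B$. Rescaling by $n/t^{1/3}\to(3/\beta)^{1/3}$, and using boundedness of $B$ so that multiplying by a factor tending to a constant preserves Hausdorff convergence, gives $t^{-1/3}R_t\to(3/\beta)^{1/3}B$.

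The main obstacle is the upper bound in the squeeze. It needs the observation that the departures in a circuit interval come only from $A_{n+1}$ and are limited by the no-repeated-edge part of Lemma~\ref{lem:one-circuit}. The remaining steps are routine calculus of Cesàro sums and inversion.
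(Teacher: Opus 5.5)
Your proof is correct and follows the paper's argument. You derive the squeeze from Lemma~\ref{lem:one-circuit}, sum to get $T(n)=\frac\beta3 n^3+o(n^3)$, and then use the sandwich $A_n\subseteq R_t\subseteq A_{n+1}$ together with $n t^{-1/3}\to(3/\beta)^{1/3}$. You also spell out two details the paper leaves implicit: the no-repeated-edge bound behind the upper half of the squeeze, and the elementary Hausdorff sandwich fact.
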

	
	\begin{proof}
		By Lemma~\ref{lem:one-circuit}, between times $T(n)$ and $T(n+1)$ the walk
		departs exactly $\deg(x)$ times from each $x\in A_n$ and every departure
		leaves a vertex of $A_{n+1}$, which is \eqref{eq:universal-clock-squeeze};
		with the second assumed asymptotic, summing gives
		$T(n)=\beta n^3/3+o(n^3)$.  Choosing $n$ with $T(n)\leq t<T(n+1)$ gives
		$A_n\subseteq R_t\subseteq A_{n+1}$ and $nt^{-1/3}\to(3/\beta)^{1/3}$, from
		which the assumed asymptotics give the stated limits for $|R_t|t^{-2/3}$
		and $t^{-1/3}R_t$.
	\end{proof}
	
	\begin{proof}[Proof of Proposition~\ref{prop:path-reduction}\textup{(iii)}]
		Fix a nonzero $z\in\Lambda$ and $b<aC^{-1}|z|$, where $a$ is as in
		\eqref{eq:criterion-passage}.  By \eqref{eq:distance-comparison},
		$bn\leq a d_G(o,o+nz)$ for all sufficiently large $n$.  Hence
		\eqref{eq:criterion-passage} gives
		\[
			\P\left\{\frac{\tau(o,o+nz)}n\leq b\right\}\longrightarrow0\, .
		\]
		Since $\tau(o,o+nz)/n\to\mu(z)$ in probability, $\mu(z)\geq b$.
		Letting $b\uparrow aC^{-1}|z|$ gives
		$\mu(z)\geq aC^{-1}|z|$, and homogeneity and continuity extend this to
		$\R^2$.  Part~\textup{(i)} and
		Proposition~\ref{prop:passage} give \eqref{eq:passage-balls}, so
		Proposition~\ref{prop:circuit-shape} yields \eqref{eq:shape-sandwich} and
		$n^{-1}A_n\to B$; since $\partial B$ is Lebesgue null, counting lattice
		points of each vertex orbit on both sides verifies the hypotheses of
		Proposition~\ref{prop:circuit-clock} with deterministic $\alpha,\beta>0$,
		whence $\kappa=(3/\beta)^{1/3}$ and $c_*=\alpha(3/\beta)^{2/3}$.
	\end{proof}

	\subsection{A finite-scale criterion}\label{sec:block}
	In this section we reduce the almost-live-path estimate \eqref{eq:criterion-path-hypothesis}
	to one inequality at a single scale: if a
	live path is sufficiently unlikely to cross one large enough block, then domination by
	subcritical percolation carries the bound to every $R$.
	
	Let $G$ be a doubly periodic graph with an arbitrary rotor mechanism, and identify its translation lattice with $\Z^2$, so that the vertex set splits into finite sets indexed by $\Z^2$.  For an integer
	$L\geq1$ and $z\in\Z^2$, let the \defn{block} $Q_z$ be the union of the sets
	indexed by $Lz+\{0,\ldots,L-1\}^2$, and let
	\begin{equation}\label{eq:block-definitions}
		Q_z^+\coloneqq\bigcup_{|w-z|_\infty\leq1}Q_w\, .
	\end{equation}
	Denote the law of the initial rotor at each vertex $x$ by law $\nu_x$ and write $\P_0$ for the
	resulting product law.  Let $\mathcal C_z$ be the event that a live path
	starts in $Q_z$, stays in $Q_z^+$ except for its last vertex, and ends
	outside $Q_z^+$.
	
	\begin{lemma}[Block estimate]
		\label{lem:block-live-paths}
		There are $\varepsilon>0$ and $L_0\geq1$ with the following property.  Let
		$L\geq L_0$ and suppose that
		\begin{equation}\label{eq:block-crossing-hypothesis}
			\sup_{z\in\Z^2}\P_0(\mathcal C_z)<\varepsilon\, .
		\end{equation}
		Then \eqref{eq:criterion-path-hypothesis} holds under $\P_0$ for some
		$\eta>0$.  There is also $\delta>0$ such that the same $\eta$ works
		for every product law whose marginal at each vertex $x$ is within total
		variation distance $\delta$ of $\nu_x$.
	\end{lemma}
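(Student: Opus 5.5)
The plan is a renormalization argument: a long almost-live path is coarse-grained into a chain of blocks, most of which contain a genuine live crossing, and the probability of such a chain is bounded by a Peierls count. Fix $L\geq L_0$ and suppose $\mathcal L_\eta(u\to v,R)$ occurs, witnessed by a path $\gamma$ from $u$ reaching graph distance $R$, along which the live condition fails at a set $F$ of at most $\eta R$ internal vertices. Walk along $\gamma$ and record successive block indices $z_0,z_1,\ldots,z_k$. Here $z_0$ is the block containing $u$, and $z_{i+1}$ is the block containing the first vertex of $\gamma$ after its last visit to $Q_{z_i}$ that lies outside $Q_{z_i}^+$. Consecutive indices then satisfy $|z_{i+1}-z_i|_\infty=2$, or at least $z_{i+1}\in Q^{++}$-neighbourhood, a bounded set of possibilities. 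By \eqref{eq:distance-comparison}, reaching graph distance $R$ forces $k\geq cR/L$ for some constant $c$.

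Call step $i$ \emph{good} if the segment of $\gamma$ from the relevant vertex in $Q_{z_i}$ to its exit from $Q_{z_i}^+$ contains no vertex of $F$. In that case the segment, including its endpoints, is a live path that starts in $Q_{z_i}$, stays in $Q_{z_i}^+$ except for its last vertex, and ends outside, so $\mathcal C_{z_i}$ occurs. Each failure vertex lies in the $Q^+$ of boundedly many indices, and the segments are essentially disjoint pieces of $\gamma$. Hence at most $C\eta R$ steps are bad. Choosing $\eta$ small compared with $c/L$ (for instance $\eta=c/(4CL)$) leaves at least $k/2$ good steps.

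It remains to extract independence. Among the good indices, pass to a subsequence whose $\ell^\infty$ distances are at least $3$, so that the sets $Q_{z}^+$ are disjoint; this keeps a fraction $\geq 1/C'$ of them. The events $\mathcal C_z$ depend only on rotors in $Q_z^+$, and under a product law they are therefore independent for such disjoint families. The number of block-lattice animals or chains of length $k$ starting at $z_0$ is at most $D^k$, and the number of ways to choose the subset of good, spread-out indices is at most $2^k$. A union bound gives
\[
\P\{\mathcal L_\eta(u\to v,R)\}\leq\sum_{k\geq cR/L}(2D)^k\varepsilon^{k/(2C')}\, ,
\]
which tends to $0$ uniformly in $u\to v$ once $\varepsilon<(2D)^{-2C'}$. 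Crucially, $D$ and $C'$ depend only on the geometry (the block construction) and not on $L$, since $z_{i+1}$ ranges over a fixed neighbourhood of $z_i$. This is what lets us fix $\varepsilon$ first and then take any $L\geq L_0$.

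For the perturbation statement, $\mathcal C_z$ depends on the rotors of the finitely many vertices of $Q_z^+$, of which there are at most $CL^2$. Coupling each marginal with $\nu_x$ so that the two rotors disagree with probability less than $\delta$ gives $\P(\mathcal C_z)\leq\P_0(\mathcal C_z)+CL^2\delta$. Since the hypothesis \eqref{eq:block-crossing-hypothesis} is a strict inequality, some slack remains, and for $\delta$ small (depending on $L$) the perturbed supremum is still below a fixed threshold $\varepsilon'$ at which the argument above works. One should choose the working threshold as, say, $\varepsilon/2$ versus $\varepsilon$, so the same $\eta$ serves both laws. The main obstacle I expect is the bookkeeping in the coarse-graining step. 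It requires a careful definition of the chain so that the segments are genuinely live crossings in the sense of $\mathcal C_z$, with the endpoint conventions for internal vertices handled correctly, and so that each failure vertex spoils only boundedly many steps.
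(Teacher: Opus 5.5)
Your proof is correct, and it differs from the paper's in how it handles the failures of the live condition and the dependence between blocks.

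\emph{The paper's route.} The paper marks every vertex independently with probability $s$. It replaces $\mathcal C_z$ by the event $E_z$ that some crossing of $Q_z^+$ is live at every unmarked internal vertex, and pays a factor $s^{\eta R}$ for marking all failures of a witnessing path. It then invokes Liggett--Schonmann--Stacey to dominate the $2$-dependent field $(\mathbf 1_{E_z})$ by independent Bernoulli variables of parameter $1/8$. Every crossed block then counts, and a count of self-avoiding block sequences ($7/8<1$) finishes, with $\eta$ chosen so that $\eta\log(1/s)<\gamma$.

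\emph{Your route.} You do the bookkeeping deterministically. Your segments satisfy $a_0\le b_0\le a_1\le b_1\le\cdots$, so their interiors are disjoint and each failure spoils at most one crossing. Hence at most $\eta R$ steps are bad, rather than $C\eta R$. With $\eta\asymp 1/L$, at least half the crossings are genuine $\mathcal C_{z_i}$ events. Passing to a $1/25$-fraction of the (distinct) good indices with pairwise $\ell^\infty$-distance at least $3$ gives exact independence under the product law. This costs only an extra factor $2^k$ in the Peierls sum.

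\emph{What each buys.} Your argument is more elementary: no auxiliary marks, no stochastic-domination theorem, and an explicit universal threshold such as $\varepsilon<32^{-50}$. The paper's marking device turns ``almost live'' into a single per-block event, so no spoiled crossings need to be discarded. It also yields a product-measure domination of the entire block field. The perturbation step is essentially the same in both arguments: a vertexwise coupling on $Q_z^+$ plus the slack in the strict inequality. No halving of $\varepsilon$ is actually needed, since $\delta<\bigl(\varepsilon-\sup_z\P_0(\mathcal C_z)\bigr)/|Q_0^+|$ already keeps the perturbed supremum below $\varepsilon$.

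\emph{One point to make explicit.} State how $L_0$ is chosen: large enough that adjacent vertices lie in blocks at $\ell^\infty$-distance at most one. This is what makes $|z_{i+1}-z_i|_\infty=2$ exactly, so that $D=16$ and your threshold on $\varepsilon$ do not depend on $L$.
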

	
	\begin{proof}
		Choose $L_0$ so that, whenever $L\geq L_0$, adjacent graph vertices lie
		in blocks whose indices are at $\ell^\infty$-distance at most one.
		Choose $\varepsilon>0$ so that every $2$-dependent $\{0,1\}$-field with
		one-site probabilities at most $2\varepsilon$ is dominated by independent
		Bernoulli variables of parameter $1/8$; this is possible by
		\citet[Theorem~0.0\textup{(ii)}]{LiggettSchonmannStacey1997}.
		Since \eqref{eq:block-crossing-hypothesis} is strict, we may fix $s,\delta>0$
		small enough that
		\[
			\sup_{z}\P_0(\mathcal C_z)+|Q_0^+|(s+\delta)<2\varepsilon\, .
		\]
		Fix laws $\widetilde\nu_x$ within total variation distance $\delta$ of
		$\nu_x$.  Under $\widehat\P$, let the initial rotors be independent with
		laws $\widetilde\nu_x$, and mark each vertex independently with probability
		$s$; a path is live when it is live for these rotors.
		
		Let $E_z$ be the event that a path crosses $Q_z^+$ as in $\mathcal C_z$ and
		is live at every unmarked internal vertex.  Independently over vertices,
		couple the rotor at each vertex $x$ with one of law $\nu_x$ so that the two
		coincide with probability at least $1-\delta$.  A path realizing $E_z$ realizes $\mathcal C_z$ for the
		coupled rotors unless some vertex of $Q_z^+$ is marked or carries differing
		rotors, so the above display gives $\sup_z\widehat\P(E_z)<2\varepsilon$.
		Since
		$E_z$ depends only on the rotors and marks in $Q_z^+$, the field
		$(\mathbf 1_{E_z})_{z\in\Z^2}$ is $2$-dependent and is therefore dominated
		by independent Bernoulli variables of parameter $1/8$.
		
		On $\mathcal L_\eta(u\to v,R)$, choose the first witnessing path in a fixed
		ordering.  Marking all its failures has conditional probability at least
		$s^{\eta R}$ and forces $E_z$ along $cR$ blocks.  A self-avoiding sequence of
		block indices has at most seven continuations at each step, and $7/8<1$, so
		there are $\gamma,C>0$ such that
		\[
			s^{\eta R}\widehat\P\{\mathcal L_\eta(u\to v,R)\}
			\leq Ce^{-\gamma R}\, .
		\]
		Choosing $\eta\log(1/s)<\gamma$ proves
		\eqref{eq:criterion-path-hypothesis}, in fact with exponential decay.
	\end{proof}
	
	\section{Graphs of maximum degree three}
	\label{sec:killed-walk}
	
	Throughout this section the initial rotors are independent and uniform.  We
	show that on every graph of maximum degree three there is no infinite live
	path (Proposition~\ref{prop:subcubic-recurrence}), so the walk is recurrent by
	Proposition~\ref{prop:live-recurrence}.  We also show that under the additional condition 
	the graph is doubly periodic and embedded in $\R^2$,  a live path reaches distance $R$ from its first vertex with
	probability at most $Ce^{-cR}$
	(Proposition~\ref{prop:degree-three-passage}).  Together with
	Lemma~\ref{lem:block-live-paths}, this verifies
	\eqref{eq:criterion-path-hypothesis} and proves Theorem~\ref{thm:main} in this case.

	For vertices $v\sim w$, let $r_v(w)$ be the position of $v\to w$ in the
	cyclic order beginning immediately after the initial rotor at $v$.  Thus a
	path that enters $v$ from $u$ and leaves along $v\to w$ is live at $v$
	precisely when
	\[
	r_v(w)<r_v(u)\, .
	\]
	There are $r_v(u)-1$ such edges $v\to w$, and $r_v(u)$ is uniform on
	$\{1,\ldots,\deg(v)\}$.  Their expected number is
	\begin{equation}\label{eq:mean-continuations}
		\sum_{\substack{w\sim v\\ w\ne u}}\P\{r_v(w)<r_v(u)\}
		=\E[r_v(u)-1]
		=\frac{\deg(v)-1}{2}
		\leq1\, .
	\end{equation}
	Proposition~\ref{prop:subcubic-recurrence} is proved by exploring every such
	edge and dominating the resulting tree by a critical Galton--Watson tree.  For
	the quantitative estimate in Proposition~\ref{prop:degree-three-passage}, the
	probabilities on the left of \eqref{eq:mean-continuations} instead define a
	killed nonbacktracking chain, a walk being \defn{nonbacktracking} when it
	never traverses an edge and immediately traverses it in reverse.
	
	\citet[proof of Theorem~6\textup{(ii)}]{AngelHolroyd2011} carry out this
	branching comparison on a tree, where the live paths form a Galton--Watson tree
	exactly because a vertex is entered only from its parent.  On a general graph a
	vertex may be entered from several directions, so we adapt the exploration in
	\citet[proof of Theorem~1]{BenjaminiSchramm1996}.

	The exploration is defined as follows.  Fix a directed edge $e=o\to x$ and add
	it to a first-in, first-out queue.  For every $v\in V\setminus\{o\}$, set
	$M_v=0$. While the queue is nonempty, remove and process its first edge $u\to v$: if
	$r_v(u)\leq M_v$, do nothing; otherwise add to the queue, in increasing order
	of $r_v(w)$, every edge $v\to w$ not already added that satisfies
	\[
		w\sim v,
		\qquad
		w\ne o,
		\qquad
		M_v<r_v(w)<r_v(u)\, ,
	\]
	and then set $M_v=r_v(u)$.  Thus $M_v$ is the largest $r_v(u)$ over the edges
	$u\to v$ processed so far.  The added edges form a tree rooted at $e$, in which
	the parent of an edge added while $u\to v$ was being processed is $u\to v$.
	Every added edge is eventually processed, since only finitely many edges
	precede it in the queue.  A vertex $v\in V\setminus\{o\}$ is reached when an edge with
	head $v$ is added.  For each $v\in V\setminus\{o\}$, after an edge entering $v$
	is processed, every edge $v\to w$ with $w\ne o$ and $r_v(w)<M_v$ has been
	added, unless $w\to v$ was processed and increased $M_v$.
	
	\begin{proposition}[No infinite live paths]
		\label{prop:subcubic-recurrence}
		Let $G$ be an infinite connected graph of maximum degree three,
		equipped with an arbitrary rotor mechanism.  If the initial rotors are
		independent and uniform, then almost surely $G$ contains no infinite live
		path.
	\end{proposition}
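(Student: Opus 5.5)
The plan is to use the exploration just defined, started from each directed edge $e=o\to x$, and to prove two things. First, the exploration tree is almost surely finite. Second, every infinite live path starting with $e$ would force the tree to be infinite. A countable union over directed edges then finishes the proof.

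\emph{The exploration captures live paths.} Let $x_0=o,x_1=x,x_2,\ldots$ be a live path. By induction on $i$ I will show that for each $i\geq1$ the exploration processes some edge $u\to x_i$ with $r_{x_i}(u)\geq r_{x_i}(x_{i-1})$, so that $M_{x_i}\geq r_{x_i}(x_{i-1})$ at termination.
\begin{itemize}
\item Given this at $x_i$, the live condition gives $r_{x_i}(x_{i+1})<r_{x_i}(x_{i-1})\leq M_{x_i}$.
\item By the invariant recorded after the definition of the exploration, the edge $x_i\to x_{i+1}$ was then added, and hence processed, unless $x_{i+1}\to x_i$ was processed and raised $M_{x_i}$.
\item In the first case, $x_i\to x_{i+1}$ itself witnesses the claim at $x_{i+1}$.
\item In the second case, the processed edge $x_{i+1}\to x_i$ must have been added while processing an edge into $x_{i+1}$ with larger rank. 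The live path itself never returns to $o$, since its vertices are distinct, so the restriction $w\neq o$ is harmless.
\end{itemize}
The bookkeeping in this second case is delicate. I expect to handle it with the Benjamini--Schramm style argument: consider the first edge of the path not reached in the correct manner, and derive a contradiction by comparing ranks at the first vertex where the induction fails. Once the claim holds for all $i$, each $x_i$ is reached. Since an edge is added at most once, infinitely many distinct vertices reached means infinitely many added edges, so the tree is infinite.

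\emph{Domination by a critical Galton--Watson tree.} Process the queue in order and reveal the rotor at a vertex $v$ only when needed. The main obstacle is that $v$ may be entered several times, so its rotor is reused. The key observation is that the offspring generated at $v$ across all visits are exactly the edges $v\to w$ with rank in $(M_v^{\mathrm{old}},r_v(u))$. The total number of children ever produced at $v$ is therefore at most $\max_u r_v(u)-1\leq\deg(v)-1\leq2$.

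At the first processing of an edge into $v$, conditionally on the past, $r_v(u)$ is uniform on $\{1,\ldots,\deg v\}$, giving mean $(\deg v-1)/2\leq1$ by \eqref{eq:mean-continuations}. A later entry from $u'$ produces $(r_v(u')-M_v-1)^+$ children. Conditionally on the earlier information, $r_v(u')$ is uniform on the remaining unused ranks. I plan to check directly, for degree $\leq3$, that the total expected progeny contributed at $v$ over all entries equals that of a single uniform draw. Concretely, I would assign to each processed edge into $v$ the increments of $M_v$ and show the total is stochastically dominated by a uniform $\{0,1,2\}$ count. This is a finite case check on three ranks.

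Then, letting each processed edge spawn its children, I will couple so that the number of added edges is dominated by the total progeny of a Galton--Watson process with offspring uniform on $\{0,1,2\}$. That process has mean $1$ and positive variance, hence is critical and nondegenerate, and dies out almost surely. The exploration is therefore finite almost surely, so no infinite live path begins with $e$. This domination step is the part I expect to need the most care.
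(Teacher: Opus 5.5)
Your first part is correct and matches the paper's induction. The case you flag as delicate is already closed by your own bullet: the parent $b\to x_{i+1}$ of the processed edge $x_{i+1}\to x_i$ exists because $x_{i+1}\neq o$. That parent was processed and has $r_{x_{i+1}}(b)>r_{x_{i+1}}(x_i)$, which is exactly the claim at $i+1$.

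The gap is in the domination step, where two of your planned claims are false.

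\emph{Later ranks are not uniform.} The mechanism is a fixed cyclic order, so knowing $r_v(u)$ for one neighbor $u$ determines the initial rotor at $v$, and hence all of $r_v$. After the first processed edge into $v$, the ranks of later entries are deterministic, not uniform on the unused ranks.

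\emph{The per-vertex total is not dominated by a uniform $\{0,1,2\}$ variable, and its mean can exceed $1$.} Let $\deg v=3$, with neighbors $a,b,c$ of ranks $1,2,3$.
\begin{itemize}
\item If $a\to v$ is the first processed edge into $v$, it adds nothing and sets $M_v=1$.
\item If later $c\to v$ is processed before $b\to v$, it adds $v\to b$. This happens with positive probability, for instance on the honeycomb lattice, around a hexagon through $a,v,c$.
\end{itemize}
So the total number of children at $v$ is $2$ or $1$ when the first rank is $3$ or $2$, and can be $1$ when it is $1$. Hence $\P\{\text{total}\geq1\}>2/3$ and the mean exceeds $1$. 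No finite case check on three ranks can give the domination you want. The same deterministic child at a re-entry also rules out domination edge by edge in the tree of added edges.

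The missing idea is to charge the extra child created at a re-entry to the chain along which the re-entry arrived, not to $v$.
\begin{itemize}
\item Call the first processed edge into each reached vertex \emph{retained}.
\item A nonretained edge $u\to v$ is processed when $M_v\geq1$. Its children therefore have ranks in $(M_v,r_v(u))\subseteq(1,3)$, so it has at most one child.
\item Nonretained edges thus form non-branching chains, each of which just passes along a child slot of an earlier edge.
\item After contracting these chains, a retained edge $u\to v$ has at most $r_v(u)-1$ retained children.
\item The rotor at $v$ is unexamined until $u\to v$ is processed, so $r_v(u)$ is uniform on $\{1,\ldots,\deg v\}$ given the past.
\end{itemize}
This gives domination of the retained tree by the Galton--Watson tree with offspring uniform on $\{0,1,2\}$, which dies out. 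Retained edges correspond bijectively to reached vertices, so your first part then finishes the proof.
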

	
	\begin{proof}
		Fix a directed edge $e=o\to x_1$, run the exploration from $e$, and let
		$x_0=o,x_1,\ldots,x_n$, where $n\geq1$, be a finite live path beginning with
		$e$.
		We show by induction that
		$M_{x_i}\geq r_{x_i}(x_{i-1})$ for $1\leq i\leq n$.  In particular, every
		$x_i$ with $i\geq1$ is reached.  The inequality holds at $i=1$ once
		$o\to x_1$ is processed.  Given the
		inequality at $i<n$, the live property gives
		$r_{x_i}(x_{i+1})<r_{x_i}(x_{i-1})\leq M_{x_i}$,
		so the exit $x_i\to x_{i+1}$ has been added unless $x_{i+1}\to x_i$ is an
		edge whose processing increased $M_{x_i}$.  The exit cannot enter $o$ because
		the live path is simple.  If the exit was added, it is eventually processed, and
		$M_{x_{i+1}}\geq r_{x_{i+1}}(x_i)$.  Otherwise the parent of
		$x_{i+1}\to x_i$ exists because $x_{i+1}\ne o$.  This parent is an edge
		$b\to x_{i+1}$ satisfying
		$r_{x_{i+1}}(b)>r_{x_{i+1}}(x_i)$, so again
		$M_{x_{i+1}}\geq r_{x_{i+1}}(x_i)$.
		
		Retain the first processed edge entering each reached vertex and join it to its
		nearest retained ancestor.  Every nonretained edge $u\to v$ has at most one
		child, since before it is processed $M_v\geq1$ and there is at most one
		$w\sim v$ satisfying $M_v<r_v(w)<r_v(u)\leq3$.  Thus, after contracting
		nonretained chains, a retained edge $u\to v$ has at most $r_v(u)-1$ children.

		For each reached $v\in V\setminus\{o\}$, its rotor is unexamined when the first
		edge $u\to v$ entering $v$ is processed.  Conditional on the past, $r_v(u)$ is
		uniform on $\{1,\ldots,\deg(v)\}$.  Hence the retained tree is dominated by a
		Galton--Watson tree with offspring uniform on $\{0,1,2\}$. This completes the proof. 
	\end{proof}
	
	We next show exponential decay under the extra hypothesis of doubly periodic.
	
	\begin{proposition}[Exponential decay of live paths]
		\label{prop:degree-three-passage}
		Let $G$ be a doubly periodic graph in $\R^2$ of maximum degree three, equipped with an
		arbitrary rotor mechanism and independent uniform initial rotors.
		There are constants $c,C>0$ such that, for every directed edge $u\to v$ and
		every integer $R\geq1$,
		\begin{equation}\label{eq:degree-three-live-path-tail}
			\P\left\{
			\begin{array}{c}
				\text{a live path starts with $u\to v$ and contains a vertex}\\
				\text{at graph distance $R$ from $u$}
			\end{array}
			\right\}
			\leq Ce^{-cR}\, .
		\end{equation}
	\end{proposition}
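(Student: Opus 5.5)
We prove this by a first-moment bound on live paths: the expected number of live paths from $u\to v$ of length $n$ decays exponentially. A live path reaching graph distance $R$ from $u$ has at least $R$ edges, so summing over $n\geq R$ then gives \eqref{eq:degree-three-live-path-tail}. For a fixed path $x_0=u,x_1=v,\ldots,x_n$ with distinct vertices, the live events at distinct internal vertices depend on distinct rotors and are therefore independent. At an internal vertex $x_i$ the probability of liveness is $\P\{r_{x_i}(x_{i+1})<r_{x_i}(x_{i-1})\}$, which equals $1/2$ when $\deg(x_i)=3$ and $0$ when $\deg(x_i)\le 2$; in the latter case the only other neighbor is $x_{i+1}$, and it precedes $x_{i-1}$ with probability $1/2$ when the degree is two. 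So the expected number of live paths of length $n$ starting with $u\to v$ is $\E_{\mathrm{NB}}[\text{product of weights}]$ summed over self-avoiding nonbacktracking continuations, where each step multiplies by $1/2$ per choice. Equivalently, it is the probability that the killed nonbacktracking chain mentioned after \eqref{eq:mean-continuations} survives $n$ steps without repeating a vertex. This chain moves from $x_i$ to each of its other neighbors with probability $1/2$, so it is never killed at a degree-three vertex and is killed with probability $1/2$ at a degree-two vertex.

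The key step is to show that this chain, which is killed upon self-intersection, has survival probability at most $Ce^{-cn}$. We use periodicity to find $K$ and $p>0$ such that, from any directed edge, with probability at least $p$ within the next $K$ steps the chain either is killed at a vertex of degree at most two or closes a cycle, thereby revisiting a vertex. For this we argue as in the overview. If every vertex had degree three and every cycle within distance $K$ were absent, the $K$-ball around the edge would be a binary tree with about $2^{K}$ vertices, contradicting quadratic volume growth \eqref{eq:distance-comparison} for large $K$. Hence within distance $K$ of $x_i$ there is a vertex of degree at most two or a short cycle. We then exhibit an explicit nonbacktracking route of length at most $C K$ from the current edge to that feature. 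The route ends either by entering a degree-two vertex, which leads to killing with probability $1/2$ (or forced killing if it is a leaf), or by going around the short cycle back to an already visited vertex. Each prescribed step has probability exactly $1/2$, so $p\ge 2^{-CK}$. The Markov property of the chain, applied in blocks of $CK$ steps, then gives survival probability at most $(1-p)^{\lfloor n/(CK)\rfloor}$.

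The main obstacle is guaranteeing that a nonbacktracking route to the nearby feature exists from the given directed edge, since the route cannot simply turn around. We would handle this with a finite case analysis. Because the chain is nonbacktracking on a graph with a cycle nearby, we can steer around the cycle in either direction. If the feature lies "behind" the current edge, we first run forward to the cycle (which exists by the same volume argument applied in the forward subtree), loop around it, and return; the closing step of a loop already counts as a self-intersection, which is what we want. Periodicity makes $K$, $C$ and $p$ uniform over the finitely many edge orbits. Finally, Markov's inequality converts the first-moment bound into the probability bound \eqref{eq:degree-three-live-path-tail}.
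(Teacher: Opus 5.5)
Your proposal is correct and is essentially the paper's argument. You identify the expected number of live paths from $u\to v$ with the survival probability of the killed nonbacktracking chain, and you use quadratic volume growth on the forward nonbacktracking tree of the current directed edge to find, within $m$ steps, a route that repeats a vertex or reaches a vertex of degree at most two. You then iterate over blocks of bounded length. The paper applies the volume count to that forward tree directly, so no ball-based detour or case analysis is needed: two continuations of length $\lfloor m/2\rfloor$ with a common endpoint combine into a repeating nonbacktracking walk of length at most $m$.

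One correction, affecting only constants: at a degree-three vertex the two continuation probabilities are $1/3$ and $2/3$, not $1/2$ each, so your per-block bound becomes $3^{-m}/2$ rather than $2^{-CK}$. At a degree-two vertex the liveness probability is $1/2$, not $0$, as you in fact use later.
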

	
	\begin{proof}
		Fix $e=u\to v$ and an integer $R\geq1$.
		Let $\P_e$ be the law of the killed nonbacktracking chain
		$X_0,X_1,\ldots$, started with $X_0=u$ and $X_1=v$, that, from
		$X_{j-1}=a$ and $X_j=b$, moves to $X_{j+1}=c$ with probability
		$\P\{r_b(c)<r_b(a)\}$ for each $c\sim b$ with $c\ne a$, and is killed
		otherwise.  By \eqref{eq:mean-continuations}, its killing probability is
		$1-(\deg(b)-1)/2$, which is positive exactly when $\deg(b)\leq2$.

		We claim that
		\begin{equation}\label{eq:chain-survival}
			\P_e\{\text{the chain is not killed by step $j$,
			and $X_0,\ldots,X_j$ are distinct}\}
			\leq Ce^{-cj}
		\end{equation}
		for every integer $j\geq1$, with $c$ and $C$ depending only on $G$.
		
		By assumption, balls of radius $r$ in the graph metric have at most $Cr^2$
		vertices, while a nonbacktracking walk with distinct vertices, all of degree
		three, has two continuations at every step.  Take $m$ so large that
		$2^{\lfloor m/2\rfloor}>Cm^2$.  From every directed edge, some
		nonbacktracking walk of at most $m$ steps therefore repeats a vertex or reaches
		a vertex of degree at most two: otherwise two distinct continuations of length
		$\lfloor m/2\rfloor$ have the same endpoint, and following one to that endpoint
		and the other in reverse gives such a repetition.  Steps
		have probability at least $1/3$ and killings at least $1/2$, so given any
		distinct $X_0,\ldots,X_j$ with no killing among them, the chain follows such
		a walk with conditional probability at least
		$3^{-m}/2$, and is then killed or repeats a vertex;
		iterating over blocks of $C m$ steps gives \eqref{eq:chain-survival}.
		
		Let $\Gamma_R$ be the set of paths $\gamma=(x_0,\ldots,x_\ell)$ with $x_0=u$
		and $x_1=v$ such that $d_G(u,x_i)<R$ for $i<\ell$ and $d_G(u,x_\ell)=R$. For $\gamma\in\Gamma_R$, independence of
		the rotors at its internal vertices gives
		\[
		\P\{\gamma\text{ is live}\}
		=\P_e\{X_i=x_i\text{ for }0\leq i\leq\ell\}\, .
		\]
		The events on the right are disjoint, and every $\gamma\in\Gamma_R$ has at
		least $R$ edges.  Hence
		\begin{align*}
			\P\{\text{some $\gamma\in\Gamma_R$ is live}\}
			&\leq \sum_{\gamma\in\Gamma_R}\P\{\gamma\text{ is live}\}\\
			&=\P_e\left(\bigcup_{\substack{\gamma=(x_0,\ldots,x_\ell)\in\Gamma_R}}
			\{X_i=x_i\text{ for }0\leq i\leq\ell\}\right)\\
			&\leq Ce^{-cR}\, ,
		\end{align*}
		the last step by \eqref{eq:chain-survival} with $j=R$.
		Stopping a live path when it first reaches distance $R$ from $u$ gives an
		element of $\Gamma_R$. This completes the proof. 
	\end{proof}

	\section{Square lattice}\label{sec:square}
	
	Throughout this section $G$ is the square lattice with the clockwise rotor
	mechanism, the initial rotors are independent and uniform, and $\P_0$ is their
	product law.  We prove the following exponential decay estimate for live paths. 
	
	\begin{proposition}[Exponential decay]
		\label{prop:square-passage}
		Let $G$ be the square lattice with the clockwise rotor mechanism and
		independent uniform initial rotors, and let $\P_0$ be their product law.
		There are constants $c,C>0$ such that, for every directed edge $u\to v$ and
		every integer $R\geq1$,
		\begin{equation}\label{eq:square-live-path-tail}
			\P_0\{\text{a live path from $u\to v$ reaches graph distance $R$ from $u$}\}
			\leq Ce^{-cR}\, .
		\end{equation}
	\end{proposition}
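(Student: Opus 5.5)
The plan follows the route sketched in the proof overview: turn a long live path into a long directed open path in a dual configuration, then show that such dual paths are exponentially unlikely by comparison with a subcritical percolation.

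\emph{Dual open edges.} Rotate each directed edge $v\to w$ counterclockwise by $90^\circ$ about its midpoint; this gives a dual edge from the face on the right of $v\to w$ to the face on its left. Declare it open when $v\to w$ is second or third in the cyclic order $N,E,S,W$ beginning just after the initial rotor at $v$. At each vertex the four dual edges form a square around $v$, and exactly two adjacent sides are open. Each side is open with probability $1/2$, and which pair is open is determined by the rotor at $v$.

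\emph{From live paths to dual paths.} At an internal vertex $x_i$ of a live path, the outgoing edge has position $r_{x_i}(x_{i+1})<r_{x_i}(x_{i-1})$. The edges strictly between them in clockwise order therefore have positions $2$ or $3$, so their duals are open. Checking the three turn types (left, straight, right), one finds that these open duals link the face to the right of $x_{i-1}\to x_i$ to the face to the right of $x_i\to x_{i+1}$. The concatenation is a directed open dual path that stays within distance $1$ of the primal path. When the path turns toward the side where no intermediate edge lies, the two faces coincide; this is harmless. Hence a live path from $u\to v$ reaching distance $R$ yields an open directed dual path of length at least roughly $R-2$, starting at one of the faces adjacent to $u$. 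This is the event we must bound.

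\emph{Ruling out long dual paths.} Because the two open sides at each vertex are adjacent, some local pattern can never occur. I will identify a fixed five-edge dual pattern that no open directed dual path can traverse. For instance, a dual path that goes around a primal vertex would need three sides of one square open, which is impossible; I will look for a pattern built from two consecutive such squares. Independent bond percolation at $p=1/2$ conditioned to avoid the pattern, or equivalently a modified model in which one edge of every translate of the pattern is closed, is then subcritical. For this I will use a strict-inequality/enhancement argument in the spirit of Aizenman--Grimmett: a diminishment of critical bond percolation on $\Z^2$ strictly raises $p_c$, so the exponential decay of Menshikov/Aizenman--Barsky applies. I would prove the needed finite-scale statement directly: a crossing probability of a large box, below the threshold required by a renormalization step like Lemma~\ref{lem:block-live-paths}.

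\emph{Transferring to the rotor model.} The four sides of one square are not independent, but squares at different vertices are. I will run an exploration of the open dual path. Each time the path crosses a primal vertex's square for the first time, the pair of open sides is revealed, and conditionally on the past it is uniform among the four adjacent pairs. A given side is then open with probability $1/2$, and, given one side is open, the behavior of the remaining sides is the structured constraint. A union bound over dual paths of length $R$ avoiding the pattern, with at most $3$ continuations per step and per-step probability $1/2$, gives only $3/2>1$. That is exactly why the pattern exclusion is needed: grouping steps into blocks of bounded length, each block has strictly fewer than $2^{k}$ admissible open continuations, giving exponential decay. This counting over blocks, and rigorously verifying the uniform per-block deficit, is the main obstacle. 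If direct counting fails, I will fall back on the percolation diminishment argument combined with Lemma~\ref{lem:block-live-paths}.
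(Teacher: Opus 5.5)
Your reduction to open directed dual paths and your plan for the pattern-avoiding percolation estimate match the paper (Lemmas~\ref{lem:square-dual-path} and~\ref{lem:square-constrained-bonds}). The gap is the transfer to the rotor model, which is the heart of the proof.

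\emph{Direct counting fails.} The first moment does not decay even on a subfamily of paths. A dual walk using only east and north steps alternates left and right turns, so it never runs along three sides of a square counterclockwise. It never contains a translate of $P_\star$ in either direction. It also uses at most one side of each vertex's dual square, or two consecutive adjacent sides. Hence each such walk of length $k$ is open with probability exactly $2^{-k}$, and there are $2^k$ of them. The expected number of open admissible dual paths of length $k$ is therefore at least $1$. It is in fact exponentially large, since detours such as $N,N,W,N$ can be inserted. So no grouping into blocks yields fewer than $2^k$ admissible continuations.

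\emph{The fallback does not close the gap.} Lemma~\ref{lem:block-live-paths} needs the block-crossing bound under the rotor law $\P_0$ itself. The diminishment argument gives that bound only for independent bonds. The perturbation clause of the lemma covers only small total-variation changes of one-site laws, not the dependence among the four sides of a square.

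What is missing is a way to handle that dependence. Exactly one of $E,W$ is open at each vertex, so once a side $W$ has been tested closed, the opposite side $E$ is forced open. A cluster exploration must reveal closed edges. Revealing a vertex's pair when the path first crosses its square does not stop other branches from later using forced sides. Hence the rotor-dual configuration is not dominated by Bernoulli$(1/2)$ percolation.

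The paper explores depth-first, so that every test is a fair coin except forced ones. A Jordan-curve argument (Lemma~\ref{lem:square-active-list}) shows that a forced edge is always the only active edge. Each forced test then gives conditional probability at least $1/4$ of termination before three more forced tests. So the number of forced tests has an exponential tail (Lemma~\ref{lem:square-forced-tests}). Between forced tests the exploration is dominated by critical bond percolation, and its first-visit tree avoids $P_\star$. Lemma~\ref{lem:square-constrained-bonds} then bounds the distance covered in each interval, and the two tails combine to give \eqref{eq:square-live-path-tail}.
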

	
	Proposition~\ref{prop:square-passage} gives the block estimate
	\eqref{eq:block-crossing-hypothesis}.  For a path realizing the block event
	$\mathcal C_z$ from
	Section~\ref{sec:block}, consider the segment from the tail of its last edge
	leaving the block $Q_z$ defined in \eqref{eq:block-definitions} to the endpoint
	of the path outside $Q_z^+$.  This segment is live, starts with one of at most
	$CL$ directed edges leaving $Q_z$, and reaches a vertex at graph distance at
	least $cL-C$ from its first vertex.
	By \eqref{eq:square-live-path-tail} and a union bound,
	\[
	\sup_z\P_0(\mathcal C_z)
	\leq CLe^{-cL}
	\longrightarrow0\, .
	\]
	For large $L$, Lemma~\ref{lem:block-live-paths} therefore verifies
	\eqref{eq:criterion-path-hypothesis} on the square lattice.
	
	\begin{proof}[Proof of Theorem~\ref{thm:main}]
		Propositions~\ref{prop:degree-three-passage} and~\ref{prop:square-passage}
		verify \eqref{eq:criterion-path-hypothesis} in both cases of
		Theorem~\ref{thm:main}.  In either case, the uniform product law is invariant
		and ergodic under the translation lattice, so
		Proposition~\ref{prop:path-reduction} gives all the stated conclusions.
	\end{proof}
	
	\begin{proof}[Proof of Proposition~\ref{prop:small-perturbations}]
		The degree-three estimate \eqref{eq:degree-three-live-path-tail} and the
		square-lattice estimate \eqref{eq:square-live-path-tail} give
		\eqref{eq:block-crossing-hypothesis} for some $L$ under the uniform rotor
		laws.  The perturbation conclusion of Lemma~\ref{lem:block-live-paths} gives
		\eqref{eq:criterion-path-hypothesis} under every sufficiently small
		total-variation perturbation allowed in
		Proposition~\ref{prop:small-perturbations}.  Proposition~\ref{prop:path-reduction}
		\textup{(i)} gives conclusion~\textup{(i)} for every allowed perturbation.  If
		the one-vertex laws are invariant under the translation lattice, their product
		law is invariant and ergodic, and Proposition~\ref{prop:path-reduction}\textup{(iii)}
		gives conclusions~\textup{(ii)} and~\textup{(iii)}.
	\end{proof}
	
	In the remainder of this section we prove
	Proposition~\ref{prop:square-passage}, by mapping live paths to directed open
	paths of a dependent percolation model on the dual lattice, in which long open
	paths are exponentially unlikely.

	Recall that $r_v(w)\in\{1,2,3,4\}$ is the number of rotor advances from the
	initial rotor at $v$ until $v\to w$ is selected.  It numbers the four edges out
	of $v$ in clockwise order, starting from the edge just after the initial rotor,
	and the four possible numberings are equally likely, independently over $v$.  A
	path $x_0,\ldots,x_m$ and an internal vertex $x_i$ satisfy
	\begin{equation}\label{eq:live-numbering}
		\text{the path is live at $x_i$}
		\quad\Longrightarrow\quad
		r_{x_i}(x_{i+1})<r_{x_i}(x_{i-1})\, .
	\end{equation}

	We carry out the proof in three steps.  We say that the dual edge of $v\to w$
	is open when $r_v(w)\in\{2,3\}$.  By \eqref{eq:live-numbering}, a live path
	maps to a directed path of open dual edges (Subsection~\ref{ssec:faces}).  Each
	dual edge is open with probability $1/2$, and they are dependent, so we must
	give an additional argument for the decay in
	\eqref{eq:square-live-path-tail}.  First, no open directed path uses three
	consecutive sides of a square, and percolation avoiding a five-edge pattern
	with that property is subcritical (Subsection~\ref{sec:square-diminishment}).  We then couple the
	dual configuration to an independent Bernoulli percolation of parameter $1/2$
	by a depth-first exploration (Subsection~\ref{ssec:square-exploration}).

	\subsection{Dual edges}\label{ssec:faces}
	
	The dual lattice has the faces of $\Z^2$ as its vertices, with two faces
	adjacent when they share an edge.  Identify the face centered at
	$z+(1/2,1/2)$ with $z\in\Z^2$.
	
	Rotate each directed edge $e=v\to w$ counterclockwise through $90^\circ$ about
	its midpoint.  The resulting dual edge runs from the face on the right of $e$
	to the face on its left, and is \defn{open} when $r_v(w)\in\{2,3\}$.  The four
	dual edges from $v$ form a unit square traversed counterclockwise, with east
	side $E$ directed north, north side $N$ west, west side $W$ south, and south
	side $S$ east.  The four numberings $r_v$ give the open pairs
	$\{E,N\},\{N,W\},\{W,S\},\{S,E\}$, each with probability $1/4$, so exactly one
	of $E,W$ and one of $N,S$ is open, independently and uniformly, and the pairs
	at distinct vertices are independent.  Each dual bond crosses one lattice edge,
	and its two directed copies are controlled by that edge's endpoints
	(Figure~\ref{fig:square-comparison}\textup{(a)}).
	
	This is the dual configuration of the $p=1/2$ directed-corner model of
	\citet*[Section~7]{CoupierHenryJahnelKoppl2024}. We do not invoke any of their results, 
	but adapt their exploration argument in
	Subsection~\ref{ssec:square-exploration}.  In their Section~7 the analogue of
	their Theorem~3.1 bounds the number of faces reachable from a given face by a
	stretched exponential with exponent $1/4$; our exponential bound on the
	distance reached improves that exponent to $1/2$.
	
	\begin{figure}[t]
		\centering
		\begin{tikzpicture}[
			scale=1.0,
			prim/.style={draw=black!22,line width=0.5pt},
			face/.style={circle,draw=black!60,fill=white,line width=0.6pt,inner sep=1.5pt},
			vert/.style={circle,fill=black,inner sep=1.3pt},
			dual/.style={draw=black!45,line width=0.6pt,dash pattern=on 1.4pt off 1.4pt},
			openedge/.style={-{Stealth[length=2.4mm]},draw=RoyalBlue,line width=1.5pt}]
			\foreach \k in {0,1,2,3} {
				\begin{scope}[xshift={\k*2.80cm}]
					\foreach \x in {-1,0,1} \draw[prim] (\x,-1.05) -- (\x,1.05);
					\foreach \y in {-1,0,1} \draw[prim] (-1.05,\y) -- (1.05,\y);
					\draw[dual] (-0.5,-0.5) rectangle (0.5,0.5);
					\node[vert,label={[font=\scriptsize,inner sep=1.5pt]below right:$v$}] at (0,0) {};
					\foreach \z in {(0.5,0.5),(-0.5,0.5),(-0.5,-0.5),(0.5,-0.5)}
					\node[face] at \z {};
				\end{scope}
			}
			\draw[openedge] (0.5,-0.5) -- (0.5,0.5);
			\draw[openedge] (0.5,0.5) -- (-0.5,0.5);
			\node[font=\scriptsize] at (0,-1.35) {$\{E,N\}$};
			\begin{scope}[xshift=2.80cm]
				\draw[openedge] (0.5,0.5) -- (-0.5,0.5);
				\draw[openedge] (-0.5,0.5) -- (-0.5,-0.5);
				\node[font=\scriptsize] at (0,-1.35) {$\{N,W\}$};
			\end{scope}
			\begin{scope}[xshift=5.60cm]
				\draw[openedge] (-0.5,0.5) -- (-0.5,-0.5);
				\draw[openedge] (-0.5,-0.5) -- (0.5,-0.5);
				\node[font=\scriptsize] at (0,-1.35) {$\{W,S\}$};
			\end{scope}
			\begin{scope}[xshift=8.40cm]
				\draw[openedge] (-0.5,-0.5) -- (0.5,-0.5);
				\draw[openedge] (0.5,-0.5) -- (0.5,0.5);
				\node[font=\scriptsize] at (0,-1.35) {$\{S,E\}$};
			\end{scope}
			\node[font=\small] at (4.20,-2.0)
			{\textup{(a)} the four possible open pairs at $v$};
			\begin{scope}[xshift=11.7cm,yshift=-1.0cm,scale=1.0]
				\foreach \x in {-1,0,1,2} \draw[dual] (\x,-0.3) -- (\x,2.3);
				\foreach \y in {0,1,2} \draw[dual] (-1.2,\y) -- (2.2,\y);
				\fill[BrickRed!13] (0,0) rectangle (1,1);
				\fill[BrickRed!13] (0,1) rectangle (1,2);
				\draw[dual] (0,0) rectangle (1,1);
				\draw[dual] (0,1) rectangle (1,2);
				\node[vert] at (0.5,0.5) {};
				\node[vert] at (0.5,1.5) {};
				\draw[draw=black,line width=1.6pt]
				(0,0) -- (1,0) -- (1,1) -- (0,1) -- (0,2) -- (1,2);
				\foreach \z in {(0,0),(1,0),(1,1),(0,1),(0,2),(1,2)}
				\node[face] at \z {};
			\end{scope}
			\node[font=\small] at (12.2,-2.0)
			{\textup{(b)} the forbidden five-edge path};
			\begin{scope}[yshift=-3.1cm,xshift=0.6cm]
				\node[vert] at (0,0) {};
				\node[right,font=\small,inner sep=4pt] at (0.08,0) {lattice vertex};
				\node[face] at (3.1,0) {};
				\node[right,font=\small,inner sep=4pt] at (3.22,0) {face};
				\draw[dual] (5.0,-0.28) rectangle (5.56,0.28);
				\node[right,font=\small,inner sep=4pt] at (5.56,0) {dual square};
				\draw[openedge] (8.4,0) -- (9.02,0);
				\node[right,font=\small,inner sep=4pt] at (9.02,0) {open dual edge};
			\end{scope}
		\end{tikzpicture}
		\caption{The dual configuration and forbidden path.  \textup{(a)} The four
			possible pairs of open dual edges around a lattice vertex, each with
			probability $1/4$.  \textup{(b)} The path in
			\eqref{eq:square-five-edge-path}. }
		\label{fig:square-comparison}
	\end{figure}
	
	\begin{lemma}[Live paths give open dual paths]
		\label{lem:square-dual-path}
		Let $m\geq1$ be an integer and let $x_0,\ldots,x_m$ be a live nearest-neighbor
		path in $\Z^2$.
		Then there is a directed path of open dual edges from the face on the right of
		$x_0\to x_1$ to the face on the right of $x_{m-1}\to x_m$.
	\end{lemma}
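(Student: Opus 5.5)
The plan is to build the dual path one internal vertex at a time and concatenate. At each internal vertex $x_i$, $1\le i\le m-1$, I will produce a directed chain of open dual edges, all among the four dual edges around $x_i$, running from the face on the right of $x_{i-1}\to x_i$ to the face on the right of $x_i\to x_{i+1}$. The endpoints match between consecutive chains, so the concatenation joins the face on the right of $x_0\to x_1$ to the face on the right of $x_{m-1}\to x_m$. If $m=1$ there are no internal vertices, and the two faces coincide, so the empty path works.

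Fix an internal vertex $v=x_i$. Let $a$ be the directed edge $v\to x_{i-1}$, the reverse of the incoming edge, and let $b=v\to x_{i+1}$. The four dual edges from $v$ form the counterclockwise square described above. Each dual edge of $v\to w$ runs from the face on the right of $v\to w$ to the face on its left. For outgoing edges of $v$ taken in clockwise order $N,E,S,W$, the face on the left of one outgoing edge is the face on the right of the previous one in clockwise order. For example, the left of $v\to N$ is the NW face, which is the right of $v\to W$.

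So traversing the dual edges of outgoing edges in counterclockwise order, from $e$ back to $e'$, walks around the square. The dual of $v\to w$ ends at the face on the right of the clockwise predecessor of $w$.

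Next I identify the two endpoint faces. The face on the right of $x_{i-1}\to x_i$ is the face on the left of its reverse $a$. The face on the right of $b$ is the start of the dual edge of $b$.

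I will check that this face is the start of the dual edge of the clockwise successor of $a$. Indeed, the right of that successor is the left of $a$, by the adjacency fact above.

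So the chain consists of the dual edges of the outgoing edges strictly after $a$ in clockwise order, up to but excluding $b$, read backwards. Equivalently, I traverse the dual edges of the outgoing edges strictly between $b$ and $a$ in clockwise order, in counterclockwise order, ending at the face on the right of $b$. I expect the orientation bookkeeping here to be the main obstacle. I will verify it on one explicit configuration, such as Figure~\ref{fig:live-dual}: there the path arrives from the west and leaves north, and the used dual edges are those of $E$ and $S$.

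It remains to show these dual edges are open. Liveness gives $r_v(b)<r_v(x_{i-1})$ by \eqref{eq:live-numbering}. Since $r_v$ increases in clockwise order starting after the rotor, the edges strictly between $b$ and $a$ in clockwise order have positions $r$ with $r_v(b)<r<r_v(a)\le 4$ and $r>r_v(b)\ge1$. Hence $r\in\{2,3\}$, and these dual edges are open.

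If $b$ and $a$ are clockwise-adjacent, the chain is empty. In that case the face on the left of $a$ is the face on the right of $b$, which is consistent. Concatenating over $i$ gives the claimed directed open dual path.
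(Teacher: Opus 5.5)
Your approach is the paper's. At each internal vertex $v=x_i$ you take the dual edges at positions $r_v(a)-1,\ldots,r_v(b)+1$ in that order, use liveness to see that these positions lie in $\{2,3\}$, and concatenate over $i$. The final chain you use and your openness argument are correct. However, the orientation bookkeeping in the middle contains two false statements, and you should fix them rather than rely on them.

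First, your own adjacency fact says the face on the left of $a$ is the face on the right of the clockwise \emph{predecessor} of $a$, not its successor. For example, with $a=v\to N$ the left face is NW, which is the right face of $v\to W$, whereas $v\to E$ has SE on its right. So the chain begins with the dual of the edge at position $r_v(a)-1$.

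Second, ``strictly after $a$ in clockwise order, up to but excluding $b$'' is not equivalent to ``strictly between $b$ and $a$ in clockwise order''. The two are complementary arcs. The first has positions $r_v(a)+1,\ldots,4,1,\ldots,r_v(b)-1$, so whenever it is nonempty it contains a closed position $1$ or $4$. Traversed counterclockwise, its duals run from the left of $b$ to the right of $a$, which are the wrong endpoint faces. In Figure~\ref{fig:live-dual} ($a$ west, $b$ north) the first description gives the empty set. The second gives $E,S$, which is what you correctly report. Only the second description is right, and it is the one your positional argument $r_v(b)<r<r_v(a)$ actually uses.

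Likewise, the empty-chain case should be stated as $r_v(b)=r_v(a)-1$, meaning $b$ immediately precedes $a$ clockwise. ``Clockwise-adjacent'' also covers $r_v(a)=4$, $r_v(b)=1$, which is exactly the figure's configuration, and there the chain has two edges. With these sentences corrected, the proof is complete.
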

	
	\begin{proof}
		At a transition $p\to v\to w$, \eqref{eq:live-numbering} gives
		$r_v(w)<r_v(p)$.  If $r_v(w)=r_v(p)-1$, the face on the right of $p\to v$ and
		the face on the right of $v\to w$ coincide.  Otherwise the dual edges at
		rotor positions $r_v(p)-1,r_v(p)-2,\ldots,r_v(w)+1$ lead from the first face
		to the second, and every such position lies strictly between $1$ and $4$,
		hence in $\{2,3\}$, so every such edge is open.  Concatenating over the
		internal vertices proves the lemma.
	\end{proof}
	
	\subsection{A forbidden path in critical percolation}
	\label{sec:square-diminishment}
	
	Write $\P_p$ for Bernoulli bond percolation on $\Z^2$ with parameter $p$.
	We use $p_c(\Z^2)=1/2$ \citep{Kesten1980} and the standard subcritical bound
	\citep[Theorem~3.4]{Grimmett1989}: for each $p<1/2$, uniformly in $x$ and
	$r\geq1$,
	\begin{equation}\label{eq:square-subcritical-tail}
		\P_p\left\{x\text{ is joined to }\{y:|y-x|_\infty=r\}
		\text{ inside }\{y:|y-x|_\infty\leq r\}\right\}\leq Ce^{-cr}\, .
	\end{equation}
	
	Let $P_\star$ be the five-edge path in the dual lattice, identified with $\Z^2$
	as in Subsection~\ref{ssec:faces}, with vertices
	\begin{equation}\label{eq:square-five-edge-path}
		P_\star\coloneqq\bigl((0,0),(1,0),(1,1),(0,1),(0,2),(1,2)\bigr)\, ,
	\end{equation}
	drawn in Figure~\ref{fig:square-comparison}\textup{(b)}.  Note
	that by definition, no open directed dual path in this model
	contains a translate of $P_\star$ in either direction.
	
	\begin{lemma}[Percolation avoiding a pattern]
		\label{lem:square-constrained-bonds}
		There are constants $c,C>0$ such that the following holds for every $x\in\Z^2$
		and every integer $r\geq1$.  For bond percolation on $\Z^2$ with parameter
		$1/2$,
		\begin{equation}\label{eq:square-constrained-bond-tail}
			\P_{1/2}\left\{
			\begin{array}{c}
				\text{there is an open path from $x$ to $\{y:|y-x|_\infty=r\}$ inside}\\
				\text{$\{y:|y-x|_\infty\leq r\}$ containing no translate of $P_\star$,}\\
				\text{in either direction, as a consecutive subpath}
			\end{array}
			\right\}
			\leq Ce^{-cr}\, .
		\end{equation}
	\end{lemma}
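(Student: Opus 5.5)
We would prove this by a Menshikov/Aizenman–Grimmett type "essential enhancement" argument, made quantitative by a block renormalization.

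\textbf{Reduction to a single scale.} Call the event in \eqref{eq:square-constrained-bond-tail} $D_r(x)$, and let $\theta_r\coloneqq\P_{1/2}(D_r(0))$. It suffices to show $\theta_{r_0}<\varepsilon_0$ for one large $r_0$, where $\varepsilon_0$ is a fixed small constant. Indeed, a pattern-free open path crossing the box of radius $kr_0$ passes through $\asymp k$ boxes of side $r_0$. In each of them it contains a pattern-free open crossing of an annulus from a box to its $3$-fold enlargement, and pattern-freeness is inherited by subpaths. These annulus events are finitely dependent. By \citet[Theorem~0.0]{LiggettSchonmannStacey1997}, exactly as in the proof of Lemma~\ref{lem:block-live-paths}, they are dominated by independent Bernoulli variables of small parameter. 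A Peierls count over self-avoiding sequences of boxes then gives $Ce^{-cr}$.

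\textbf{Small crossing probability at some scale.} To get this, introduce the pattern-avoidance as a diminishment, following \citet{AizenmanGrimmett1991}. We couple $\P_{1/2}$ with a dilute percolation in which each translate of $P_\star$, placed at a periodic sublattice of positions, is kept fully open with only a reduced chance. More precisely, we use a monotone coupling with a measure $\widetilde\P$ that is stochastically dominated by $\P_{1/2}$ and still dominates every pattern-free open path. The cleanest route is the converse direction. A pattern-free open path is an open path in the configuration $\omega'$ obtained from $\omega$ by closing, for every fully open translate of $P_\star$, one designated edge, say the middle edge $(1,1)\to(0,1)$ of the pattern. Any path using the designated edge inside an open copy of $P_\star$ would contain that copy, provided the copy is traversed as a consecutive subpath; we handle this below.

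The map $\omega\mapsto\omega'$ is a strict diminishment in the sense of Aizenman–Grimmett. It is local, each edge's closing decision depends on a bounded neighborhood, and at every edge there is a uniformly positive conditional probability of closing. Their theorem then yields that $\omega'$ behaves like $\P_p$ for some $p<1/2$. Hence crossing probabilities of $\omega'$ decay by \eqref{eq:square-subcritical-tail}, giving $\theta_r\to0$.

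\textbf{Main obstacle: the designated edge.} The hard step is making the diminishment faithful. A pattern-free path might use the designated edge of an open copy of $P_\star$ without traversing the whole copy. We therefore would not close edges. Instead we modify the connectivity by deleting ``essential'' configurations. Following Aizenman–Grimmett's differential inequality $\partial_s\theta\le -c\,\partial_p\theta$, we prove directly a comparison of pivotal probabilities. Whenever an edge $e$ is pivotal for a pattern-free crossing, a bounded local modification around $e$ makes an adjacent pattern edge pivotal for the unrestricted crossing with comparable probability, and conversely. We would first try to establish this local-surgery lemma by finite case analysis of the shape of $P_\star$, which is a zigzag in a $2\times3$ box, near a pivotal edge. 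The lemma shows that the unrestricted crossing at $p=1/2$ and the pattern-free crossing differ by a positive shift in effective $p$. Combined with sharpness and the RSW bound at $1/2$, this gives $\theta_{r_0}$ small, completing the renormalization.
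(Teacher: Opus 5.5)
There is a genuine gap in your main step. Your overall strategy, an Aizenman--Grimmett comparison producing a shift below $p=1/2$, is the right one, but the step that carries all the weight is not set up in a form that can work.

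A differential-inequality argument needs three ingredients:
\begin{enumerate}
\item a single connection event that depends on the bonds and on an independent diminishment variable;
\item Russo's formula, expressing both $\partial_p\theta$ and $\partial_q\theta$ as sums of pivotal probabilities for that same event;
\item a local surgery that turns a pivotal bond into a pivotal diminishment variable.
\end{enumerate}
Your proposal never introduces the diminishment variable. You correctly abandon the edge-closing map, since a pattern-free path may use the designated edge. After that, your surgery lemma relates pivotality for the pattern-free crossing to pivotality for the unrestricted crossing at the same $p$. But an edge pivotal for the unrestricted crossing is just an ordinary pivotal bond, so this comparison carries no information about the restriction. Comparing $p$-derivatives of two functions cannot detect a shift in $p$. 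So no integration of it yields $\P_{1/2}\{\text{pattern-free crossing}\}\leq\P_{1/2-\varepsilon}\{\text{crossing}\}$. The ``conversely'' direction is not needed. The appeal to RSW is misplaced: positive annulus crossing probabilities at $p=1/2$ are the obstruction, not an ingredient.

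The missing idea, which is how the paper proceeds, is to forbid only a sparse periodic family of selected copies and to interpolate with marks.
\begin{itemize}
\item In each block $U_z=20z+([0,20]^2\cap\Z^2)$, place one translate $P_z$ of $P_\star$, at distance at least four from the block boundary and avoiding $x$ and $y$. Give it an independent Bernoulli$(q)$ mark $\sigma_z$.
\item Let $\theta(p,q)$ be the probability of an open $x$--$y$ path in a large finite domain that traverses a selected $P_z$ consecutively only when $\sigma_z=1$. A pattern-free path avoids every translate, so the event in the lemma is contained in the $q=0$ event, while $q=1$ is ordinary percolation.
\item Given a pivotal bond in $U_z$, keep the first admissible path outside $U_z$. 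Reroute it between its first and last vertices in $U_z$ through $P_z$; this is possible by Menger's theorem, since $U_z$ minus the four internal vertices of $P_z$ is $2$-connected. Close every other bond of $U_z$. This makes $\sigma_z$ pivotal after flipping a bounded number of bonds, giving $\partial_p\theta\leq C_0\partial_q\theta$ for $p\in[1/4,3/4]$.
\item Integrating along $(1/2-\varepsilon u,u)$ gives $\theta(1/2,0)\leq\theta(1/2-\varepsilon,1)$. Then \eqref{eq:square-subcritical-tail} and a sum over the $8r$ endpoints give $Ce^{-cr}$ directly, with no renormalization.
\end{itemize}
The sparseness keeps the surgery clean: the new route cannot traverse another marked copy, whose mark could not be flipped at bounded cost when $q$ is near $0$.

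Separately, your renormalization step is stated imprecisely. A crossing of the annulus around a box of side $r_0$ is a union over order $r_0$ starting points, so $\theta_{r_0}<\varepsilon_0$ alone does not make the block events small.
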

	
	\begin{proof}
		We adapt the finite-volume pivotal comparison of
		\citet[Section~6, especially Theorem~6.1 and
		Proposition~6.3]{CoupierHenryJahnelKoppl2024}.  Roughly, cover $\Z^2$ by square
		blocks of side $20$, place an independently marked translate of $P_\star$ in
		each, and use the marks to interpolate between constrained and ordinary bond
		percolation.  A modification changing a uniformly bounded number of bonds turns
		a pivotal bond into a pivotal mark in the same block.  Russo's formula and
		integration then compare the constrained model at $p=1/2$ with independent subcritical
		bond percolation.

		Define
		\[
		U_z\coloneqq20z+\bigl([0,20]^2\cap\Z^2\bigr),
		\qquad z\in\Z^2\, .
		\]
		Fix $y\in\Z^2$ with $|y-x|_\infty=r$.  In
		each $U_z$, place three vertex-disjoint translates of $P_\star$ at graph distance at least four from
		$\Z^2\setminus U_z$, choose one containing neither $x$ nor $y$, and denote
		it by $P_z$.
		
		Let $D_n$ be the subgraph induced by the blocks $U_z$ with $|z|_\infty\leq n$,
		with $n$ large enough that $D_n$ contains the radius-$r$ box about $x$.  Assign to
		each $P_z\subset D_n$ a Bernoulli mark $\sigma_z$ of parameter $q$,
		independent of one another and of the bonds, and write $\P_{p,q}$ for the
		resulting law.  Let $\theta(p,q)$ be the $\P_{p,q}$-probability that $D_n$
		contains an open $x$--$y$ path $\gamma$ such that, whenever $\gamma$ traverses
		a selected copy $P_z$ consecutively in either direction, $\sigma_z=1$.  Thus
		$q=0$ forbids every selected $P_z$, whereas
		$\theta(p,1)=\P_p\{x\longleftrightarrow y\text{ in }D_n\}$.
		
		\emph{Step~1.}  We prove that, for a universal $C_0<\infty$,
		\begin{equation}\label{eq:square-derivative-comparison}
			\partial_p\theta(p,q)
			\leq C_0\partial_q\theta(p,q),
			\qquad \frac14\leq p\leq\frac34,\quad 0\leq q\leq1\, .
		\end{equation}
		For the open $x$--$y$ connection event defining $\theta(p,q)$, call a bond or
		mark \defn{pivotal} if flipping its state changes whether the event occurs.  The
		event is increasing in every bond and mark, so Russo's formula gives
		\[
		\partial_p\theta(p,q)
		=\sum_{e\in E(D_n)}\P_{p,q}\{e\text{ is pivotal}\},
		\qquad
		\partial_q\theta(p,q)
		=\sum_{z:P_z\subset D_n}\P_{p,q}\{\sigma_z\text{ is pivotal}\}\, .
		\]
		
		\begin{figure}[t]
			\centering
			\begin{tikzpicture}[scale=0.55,
				lat/.style={draw=black!22,line width=0.6pt},
				shut/.style={draw=BrickRed!32,line width=0.6pt,
					dash pattern=on 1.2pt off 1.4pt},
				opath/.style={draw=RoyalBlue,line width=1.6pt},
				piv/.style={draw=RoyalBlue,line width=3.0pt},
				pat/.style={draw=BrickRed,line width=1.9pt},
				blk/.style={draw=black!55,line width=0.8pt,rounded corners=2pt},
				endv/.style={circle,draw=RoyalBlue,fill=white,line width=0.9pt,
					inner sep=1.5pt},
				mk/.style={circle,fill=black,inner sep=1.2pt}]

				\begin{scope}
					\foreach \i in {0,...,7} {
						\draw[lat] (\i,0) -- (\i,7);
						\draw[lat] (0,\i) -- (7,\i);
					}
					\draw[blk] (-0.45,-0.45) rectangle (7.45,7.45);
					\node[below,font=\small] at (3.5,-0.62) {$U_z$};
					\draw[opath] (0,5) -- (2,5) -- (2,6) -- (5,6);
					\draw[piv] (5,6) -- (5,5);
					\draw[opath] (5,5) -- (6,5) -- (6,1) -- (7,1);
					\draw[opath] (-1.7,5.9) .. controls (-1.0,5.9) and (-0.6,5) .. (0,5);
					\draw[opath] (7,1) .. controls (7.7,1) and (8.1,1.7) .. (8.7,1.7);
					\node[left,font=\small] at (-1.75,5.9) {$x$};
					\node[right,font=\small] at (8.75,1.7) {$y$};
					\draw[pat,draw=black!42] (3,2) -- (4,2) -- (4,3) -- (3,3)
						-- (3,4) -- (4,4);
					\node[font=\small,text=black!55,left] at (2.9,3.5) {$P_z$};
					\node[mk] at (0,5) {};  \node[mk] at (7,1) {};
					\node[above left,font=\small,inner sep=1.5pt] at (0,5) {$s$};
					\node[below right,font=\small,inner sep=1.5pt] at (7,1) {$t$};
					\node[right,font=\small,text=RoyalBlue,inner sep=3pt]
						at (5,5.5) {$e$};
				\end{scope}

				\begin{scope}[xshift=11.6cm]
					\foreach \i in {0,...,7} {
						\draw[shut] (\i,0) -- (\i,7);
						\draw[shut] (0,\i) -- (7,\i);
					}
					\draw[blk] (-0.45,-0.45) rectangle (7.45,7.45);
					\node[below,font=\small] at (3.5,-0.62) {$U_z$};
					\draw[opath] (0,5) -- (0,2) -- (3,2);
					\draw[opath] (4,4) -- (6,4) -- (6,1) -- (7,1);
					\draw[pat,draw=RoyalBlue] (3,2) -- (4,2) -- (4,3) -- (3,3)
						-- (3,4) -- (4,4);
					\node[font=\small,text=RoyalBlue,left] at (2.9,3.5) {$P_z$};
					\draw[opath] (-1.7,5.9) .. controls (-1.0,5.9) and (-0.6,5) .. (0,5);
					\draw[opath] (7,1) .. controls (7.7,1) and (8.1,1.7) .. (8.7,1.7);
					\node[left,font=\small] at (-1.75,5.9) {$x$};
					\node[right,font=\small] at (8.75,1.7) {$y$};
					\node[endv] at (3,2) {};  \node[endv] at (4,4) {};
					\node[mk] at (0,5) {};  \node[mk] at (7,1) {};
					\node[above left,font=\small,inner sep=1.5pt] at (0,5) {$s$};
					\node[below right,font=\small,inner sep=1.5pt] at (7,1) {$t$};
				\end{scope}
			\end{tikzpicture}
			\caption{Turning a pivotal bond into a pivotal mark.  Left: an open
				$x$--$y$ path through $e$ enters $U_z$ at $s$ and leaves at $t$;
				$P_z$ is gray.  Right: the portions from $x$ to $s$ and from $t$ to
				$y$ are kept and joined through $P_z$; every other bond with an endpoint
				in $U_z$ is closed.  Open bonds are blue and closed bonds red and dashed.}
			\label{fig:square-pivotal}
		\end{figure}
		
		Assign each bond of $D_n$ to one block containing both endpoints, breaking ties
		by a fixed rule.  Since a block contains $840$ bonds, at most $840$ are assigned
		to it.  Also fix an ordering of the simple paths in $D_n$.  Given a pivotal bond
		$e$, let $U_z$ be its assigned block.  With $e$ open, let $\gamma$ be the first
		open $x$--$y$ path in this ordering that satisfies the restriction defining
		$\theta(p,q)$.  Pivotality forces $\gamma$ to use $e$.

		Let $s$ and $t$ be the first and last vertices of $\gamma$ in $U_z$.  They are
		distinct, and each is $x$, $y$, or a boundary vertex of $U_z$, so neither lies
		on $P_z$.  Removing the four internal vertices of $P_z$ leaves a $2$-connected
		graph.  Menger's theorem therefore gives vertex-disjoint paths from $s$ and $t$
		to the two endpoints of $P_z$, meeting $P_z$ only at those endpoints.  Choose
		one such pair by a fixed rule and modify the configuration as in
		Figure~\ref{fig:square-pivotal}.

		The modification leaves every $P_{z'}$, $z'\ne z$, unchanged and makes the
		constructed route through $P_z$ the only open connection across $U_z$ between
		the kept portions of $\gamma$.  If $\sigma_z=1$, this route gives the required
		open $x$--$y$ path.  If $\sigma_z=0$, any path satisfying the restriction must
		avoid the route and would therefore already exist with $e$ closed.  Thus
		$\sigma_z$ is pivotal.  The modification affects a fixed number of bonds, so
		for $p\in[1/4,3/4]$ the probability ratio is uniformly bounded, and each
		resulting configuration has at most a fixed number of preimages.  Summing over
		pivotal bonds proves \eqref{eq:square-derivative-comparison}.
		
		\emph{Step~2.}  We prove that, for some $\varepsilon>0$,
		\begin{equation}\label{eq:square-theta-domination}
			\theta(1/2,0)\leq\P_{1/2-\varepsilon}\{x\longleftrightarrow y\text{ in }D_n\}\, .
		\end{equation}
		Choose $0<\varepsilon<\min\{1/4,1/C_0\}$.  Along
		$(p,q)=(1/2-\varepsilon u,u)$, the chain rule gives
		\[
		\frac d{du}\theta(1/2-\varepsilon u,u)
		=-\varepsilon\partial_p\theta(1/2-\varepsilon u,u)
		 +\partial_q\theta(1/2-\varepsilon u,u)
		\geq(1-C_0\varepsilon)\partial_q\theta(1/2-\varepsilon u,u)
		\geq0\, .
		\]
		Integrating this inequality from $u=0$ to $u=1$ gives
		$\theta(1/2,0)\leq\theta(1/2-\varepsilon,1)$, which is
		\eqref{eq:square-theta-domination}.
		
		\emph{Step~3.}  Stop an $x$--$y$ path when it first reaches the boundary of the
		radius-$r$ box about $x$.  Equations~\eqref{eq:square-theta-domination}
		and~\eqref{eq:square-subcritical-tail} give, uniformly in $y$,
		$\theta(1/2,0)\leq Ce^{-cr}$.  A path in
		\eqref{eq:square-constrained-bond-tail} ending at $y$ contains no selected
		$P_z$, so its existence has probability at most $Ce^{-cr}$.  Summing over the
		$8r$ possible endpoints, and changing $c$ and $C$, proves
		\eqref{eq:square-constrained-bond-tail}.
	\end{proof}
	
	\subsection{A depth-first exploration}\label{ssec:square-exploration}
	
	Here we prove Proposition~\ref{prop:square-passage}.  A live path yields a
	directed path of open dual edges (Lemma~\ref{lem:square-dual-path}), and
	Lemma~\ref{lem:square-constrained-bonds} makes long open paths unlikely, but
	for independent bonds.  The dual edges are not independent, so
	the estimate has to be transferred to them.  We do this by revealing the dual
	configuration one edge at a time, in depth-first order, and comparing each
	revealed edge with a fair coin.  The comparison fails only at a forced edge,
	one whose opposite side has already been revealed closed, and the depth-first
	order leaves such an edge as the only one still to be tested, which makes
	forced edges exponentially rare.
	Lemma~\ref{lem:square-constrained-bonds} gives an exponential tail for the
	distance covered between consecutive forced edges, and the two tails together
	bound how far the exploration reaches, which proves
	Proposition~\ref{prop:square-passage}.

	We define the exploration as follows.  Fix a face $f$ and a directed dual edge
	leaving it.  Initially the visited set
	is $\{f\}$, and the active list consists of the four edges leaving $f$, in
	counterclockwise order beginning with the chosen edge.  At each step, call the
	first active edge the current edge and reveal whether it is open or closed; we
	call this \defn{testing} the edge.  Remove it from the active list.  If it is open,
	visit its head and prepend, in right-turn, straight, left-turn order, the three
	directed edges leaving that face other than the reverse of the current edge.
	After either outcome, delete every active edge whose head is visited or lies in
	a finite component of the complement of the visited set.  Continue until the
	list is empty.
	Whenever the current edge is the rotation of $v\to w$, relabel the square formed
	by the rotations of the four edges out of $v$, writing $E$ for the current edge
	and $N,W,S$ for the other three sides in counterclockwise order.  
	
	\begin{lemma}[The exploration]\label{lem:square-exploration}
		The following statements hold.
		\begin{enumerate}[label=\textup{(\roman*)}]
			\item No bond is tested twice.
			\item If the exploration terminates, then every face reachable from $f$ by
			an open directed path is visited or lies in a finite component of the
			complement of the visited set.
			\item Given the outcomes of all earlier tests, $E$ is open with probability $1$,
			$0$, or $1/2$ according as $W$ was tested closed, tested open, or not
			tested.
		\end{enumerate}
	\end{lemma}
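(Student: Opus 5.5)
The plan is to prove the three parts separately, using only the exploration rule and the structure of the dual square at a single lattice vertex described in Subsection~\ref{ssec:faces}.

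\emph{Part (i).} First I would note that a directed dual edge is tested only when it is the first active edge, and it is removed from the active list at that moment. So it suffices to show it is never added to the list again. An edge is added only when its tail face is visited for the first time: the open test visits the head, and afterwards every active edge whose head is visited is deleted, so no later test can visit that face again. Hence each directed dual edge is added at most once, namely when its tail is first visited, or at the start if its tail is $f$.

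Second, the two directed copies of one bond cannot both be tested. Suppose $g\to h$ is tested. Then $g$ is visited. If the test is open, $h$ becomes visited, and $h\to g$ is either never added or is deleted at once because its head $g$ is visited. If the test is closed, then $h\to g$ can only be added after $h$ is visited, and at that point its head $g$ is already visited, so it is deleted immediately.

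Since a bond and its reverse arise from the two endpoints of the crossed lattice edge, no bond is tested twice in either direction. This is the sense used in part (iii).

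\emph{Part (ii).} I would argue by contradiction. Take an open directed path from $f$ to a face $h$ that is neither visited nor enclosed, and let $g\to g'$ be its first edge whose head is neither visited nor enclosed.

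Then $g$ is visited. It cannot be merely enclosed: an enclosed face is surrounded by visited faces, so the path would have to leave the enclosed component through a visited face, and $g'$ lies outside that component. When $g$ was visited, the edge $g\to g'$ was put on the active list, unless it was the reverse of the current edge, in which case $g'$ is visited.

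It could later be deleted only if its head became visited or enclosed, and the visited set only grows, so neither happened. Hence $g\to g'$ stayed on the list and was eventually tested, since the list empties at termination. It was tested open, so $g'$ is visited, which is a contradiction.

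\emph{Part (iii).} This is the main step. The edge $E$ is the rotation of some $v\to w$. Its state is determined by the rotor at $v$ alone, and so are the states of $N$, $W$, $S$. By Subsection~\ref{ssec:faces}, exactly one of $E,W$ is open, exactly one of $N,S$ is open, and the two choices are independent fair coins, independent over vertices.

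Earlier tests at other vertices therefore carry no information about the coin for $v$. Earlier tests of $N$ or $S$ at $v$ determine only the other coin. By (i), $E$ itself has not been tested. So the conditional law of $E$ given all earlier outcomes depends only on whether $W$ was tested, and if so on its outcome. This gives probabilities $1$, $0$, $1/2$ in the three cases.

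The delicate point is the conditioning. The sequence of tested edges is itself chosen adaptively by the exploration, so "given all earlier outcomes" has to be justified. I would handle this with the standard adaptive-exploration argument: the choice of the next edge is measurable with respect to the previous outcomes, and the unrevealed coins remain independent and uniform given that information.
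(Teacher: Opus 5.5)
Your proposal is correct and takes essentially the paper's approach: the paper proves this lemma only by saying it follows immediately from the definition of the exploration, and you carry out that direct verification. Your arguments supply the details the paper omits, namely each face being visited at most once and the reverse copy being excluded or deleted in (i), the monotonicity of ``visited or enclosed'' in (ii), and the two independent opposite-side coins at $v$ together with the adaptive-conditioning remark in (iii).
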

	
	\begin{proof}
		These follow immediately from the definition of the exploration.
	\end{proof}
	
	With the notation of Lemma~\ref{lem:square-exploration}\textup{(iii)}, call the
	test of $E$ \defn{forced} if $W$ was tested closed at an earlier step.  In that
	case $E$ is necessarily open.  Let
	$K\in\Z_{\geq0}\cup\{\infty\}$ be the number of forced tests.  These are the
	only tests at which, given the earlier outcomes, the current edge has probability
	greater than $1/2$ of being open.

	\begin{lemma}\label{lem:square-active-list}
		If $W$ or $S$ was tested closed before $E$ is tested, then the active list is
		$(E)$ immediately before $E$ is selected for testing.  At most one of $W$ and $S$ was tested
		closed earlier.
	\end{lemma}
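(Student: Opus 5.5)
The plan is to argue with planar topology: the depth-first order together with the rule that deletes active edges pointing into finite components of the complement of the visited set. Label the four faces at the corners of the square around $v$ so that the current edge $E$ runs from $a$ to $b$, $N$ from $b$ to $c$, $W$ from $c$ to $d$, and $S$ from $d$ to $a$. Testing $E$ means $a$ is visited. Testing $W$ earlier means $c$ was visited. Testing $S$ earlier means $d$ was visited and $S$ was closed, so $a$ was not entered through $S$.

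First I would show that when $W$ was tested closed earlier, the visited set at the moment $E$ becomes current contains an open directed path $\gamma$ in the exploration tree. I would take the tree path from the last common ancestor of $c$ and $a$ down to $a$, together with the tree path from that ancestor to $c$. Closing this up through the corner faces of the square, which meet at $v$, yields a closed curve in the plane. Its interior is a union of faces, and after removing the two arcs the complement splits into pieces.

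The core step is a bookkeeping argument for depth-first search. Since children are prepended in right-turn, straight, left-turn order, every edge still on the active list when $E$ is selected is a sibling edge hanging off $\gamma$. Such an edge lies on the side of $\gamma$ not yet explored. Because $W$ was tested from $c$ before the branch containing $a$ was completed, the orientation of the square forces every such sibling to lie on the side enclosed by $\gamma$ and the sides $N,W$. That region is bounded, so its faces lie in a finite component of the complement of the visited set, and the deletion rule has already removed those edges. The only survivor is $E$, whose head $b$ is adjacent to the unbounded side. The case where $S$ was tested closed is handled the same way, with $d$ in place of $c$ and the loop closed through $S$.

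For the second assertion, suppose both $W$ and $S$ were tested closed before $E$, and consider whichever of the two was tested later. I would apply the first part at that earlier moment, relabelling the square with that side as current. The active list then consists of that single edge. Testing it closed therefore empties the list and ends the exploration, so $E$ is never tested. The same conclusion also follows because, once $a$ and $c$ are visited via $\gamma$, the face $d$ lies in a finite component, so $S$ would have been deleted rather than tested.

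The main obstacle is the orientation claim in the core step: showing that the pending siblings lie on the enclosed side rather than the outer side. I would first check this on the configurations of $\gamma$ near $v$, using the counterclockwise orientation of the square and the right-before-left order. I would then use the Jordan curve theorem on the dual lattice to extend the argument to arbitrary $\gamma$.
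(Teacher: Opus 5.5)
Your skeleton matches the paper's: a Jordan curve $J$ formed by the first-visit-tree path between the tail of the closed side and $a$, closed up through the interior of the square, then the deletion rule, then relabelling for the second claim. But the step you defer as ``the main obstacle'' is essentially the whole proof, and the way you propose to settle it cannot work. Two things go wrong.

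First, the pending edges are not siblings hanging off $\gamma$. In a depth-first exploration they hang off the entire branch from $f$ to $a$. If the last common ancestor of $c$ and $a$ is not $f$, the edges pending at its proper ancestors are not attached to $\gamma$, and you must also place that branch on the correct side of $J$.

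Second, which component of $\R^2\setminus J$ is bounded is not determined by the orientation of the square or by any check near $v$. The same local picture at $a$, $c$ and $v$ is compatible with the tree path going around $b$ or around $d$. The missing input is that $E$ has not been deleted. Hence $b$ is joined to infinity through unvisited faces. Such a route meets neither the tree path (its faces are visited and dual edges do not cross) nor the chord (no dual edge enters the interior of a square). So $b$ lies in the unbounded component. Your sentence ``the only survivor is $E$, whose head is adjacent to the unbounded side'' therefore runs backwards: this is an input, not a conclusion.

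What remains is to show that $J$ separates $b$ from every other active head, and for that you need a global form of the depth-first invariant. The paper walks around the first-visit tree (its contour). By induction over closed tests, open tests and deletions, the positions of edges tested closed form an initial segment of the contour, followed by the active edges in list order. So the contour arc from the closed side to $E$ contains no other active edge. The complementary arc, which contains the starting point at $f$, contains all of them, and the two arcs run along opposite sides of the tree path. Every other active head thus lies in the bounded component and has been deleted.

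For the second assertion, ``whichever of the two was tested later'' fails when that is $W$. Relabelling with $W$ current makes the old $S$ its $N$-side, which the first part does not cover. You need the observation that $S$ cannot be tested before $W$. The tail of $S$ is the head $d$ of $W$, so once $S$ is current, $d$ is visited and $W$ is deleted. The later one is then $S$, the old $W$ is its $S$-side, and your relabelling argument works. Your alternative argument is incorrect. $S$ is necessarily tested while its head $a$ is still unvisited, since otherwise it would have been deleted. Its tail $d$ is itself visited at that time. So nothing that happens to $d$ after $a$ and $c$ are visited can prevent the test of $S$.
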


	\begin{proof}
		We prove the first assertion by ordering the active edges around the tree
		formed by the edges that first visited each face.  This order produces a Jordan curve separating
		the head of $E$ from every other active head; the deletion rule in the exploration then gives the
		claim.  The second assertion will follow from the first.

		Immediately before $E$ is tested, every visited face other than $f$ has a unique
		edge that first visited it, and the bonds underlying these edges form a finite
		plane tree $T$ rooted at $f$.  Walk counterclockwise around $T$, keeping $T$ on
		the left, so that each side of each tree edge is traversed once; call this
		closed walk the \defn{contour}.  At each vertex the tree edges divide the
		surrounding directions into gaps, and the contour passes once through each gap.
		Every active edge and every edge already tested closed leaves its tail through
		one gap; call the corresponding place on the contour its \defn{contour
		position}.  Read the contour from the gap at $f$ preceding the chosen initial
		edge.

		Retain the contour position of an edge tested closed, even if its head is later
		visited, and erase the position of an active edge when that edge is deleted.
		In contour order, the retained closed positions form an initial segment,
		followed by the active positions in active-list order.  This holds initially
		and is preserved by each operation: a closed test turns the first active
		position into the last closed position; an open test replaces the first active
		position by the three new positions in right-turn, straight, left-turn order;
		and a deletion erases an active position.

		Choose $\sigma\in\{W,S\}$ that was tested closed, taking $\sigma=S$ if both
		were.  Its contour position is retained.
		Its bond is not in $T$: every bond of $T$ was tested open when it first brought
		the exploration to a face, and no bond is tested twice
		(Lemma~\ref{lem:square-exploration}\textup{(i)}).

		Let $P$ be the unique path in $T$ joining the tails of $\sigma$ and $E$.  Join
		the tail of $\sigma$ to the tail of $E$ by an arc through the interior of the
		square with sides $E,N,W,S$.  Choose the arc to meet $T$ only at its endpoints
		and to approach them through the contour positions of $\sigma$ and $E$.  Its
		union with $P$ is a Jordan curve $J$
		(Figure~\ref{fig:square-jordan}).  The contour segment from $\sigma$ to $E$
		contains no other active position, while the complementary segment contains
		them all.  These segments run along opposite sides of $P$.  Each active edge
		leaves $T$ through its contour position, hence on the side of $P$ containing
		that position, and it crosses neither $P$, since no two dual edges cross, nor
		the interior arc, since no dual edge meets the interior of a square.  Hence $J$ separates the head of $E$ from every other
		active head.

	\begin{figure}[!ht]
		\centering
		\definecolor{cbBlue}{HTML}{0072B2}%
		\definecolor{cbVerm}{HTML}{D55E00}%
		\definecolor{cbGreen}{HTML}{009E73}%
		\begin{tikzpicture}[scale=0.8,
			grid/.style={draw=black!14,line width=0.45pt},
			jhl/.style={draw=black!15,line width=5pt,line cap=round,
				line join=round},
			tre/.style={-{Stealth[length=1.8mm]},draw=black!50,line width=0.85pt},
			arc/.style={draw=black,line width=1.3pt,dash pattern=on 3pt off 2pt},
			cur/.style={-{Stealth[length=2.6mm]},draw=cbBlue,line width=1.7pt},
			clo/.style={-{Stealth[length=2.4mm]},draw=cbVerm,line width=1.3pt,
				dash pattern=on 2.2pt off 1.8pt},
			act/.style={-{Stealth[length=2.4mm]},draw=black!75,line width=1.1pt},
			head/.style={circle,draw=black!75,fill=white,line width=0.9pt,
				inner sep=1.4pt},
			fc/.style={circle,fill=black!70,inner sep=1.0pt},
			rt/.style={circle,fill=black,inner sep=1.6pt},
			lb/.style={font=\small,inner sep=2.5pt}]
			\foreach \x in {0,...,7} \draw[grid] (\x,-1.4) -- (\x,3.4);
			\foreach \y in {-1,...,3} \draw[grid] (-0.4,\y) -- (7.4,\y);
			\fill[black!7] (6,1) -- (6,0) -- (2,0) -- (2,2) -- (5,2)
				.. controls (5.35,1.55) and (5.65,1.35) .. (6,1);
			\draw[draw=black!45,line width=0.7pt] (5,1) rectangle (6,2);
			\draw[jhl] (6,1) -- (6,0) -- (2,0) -- (2,2) -- (5,2);
			\foreach \a/\b/\c/\d in {%
				0/0/1/0, 1/0/2/0, 2/0/3/0, 3/0/4/0, 4/0/5/0, 5/0/6/0, 6/0/6/1,
				2/0/2/1, 2/1/2/2, 2/2/3/2, 3/2/4/2, 4/2/5/2,
				1/0/1/-1, 1/-1/0/-1, 4/0/4/-1, 5/0/5/-1,
				2/1/1/1, 3/2/3/3, 4/2/4/3, 6/0/7/0}
				\draw[tre] (\a,\b) -- (\c,\d);
			\foreach \x/\y in {%
				1/0, 2/0, 3/0, 4/0, 5/0, 6/0, 6/1, 2/1, 2/2, 3/2, 4/2, 5/2,
				1/-1, 0/-1, 4/-1, 5/-1, 1/1, 3/3, 4/3, 7/0}
				\node[fc] at (\x,\y) {};
			\draw[arc] (5,2) .. controls (5.35,1.55) and (5.65,1.35) .. (6,1);
			\draw[clo] (5,2) -- (5,1);
			\draw[cur] (6,1) -- (6,2);
			\draw[act] (2,1) -- (3,1);
			\draw[act] (4,0) -- (4,1);
			\node[head] at (3,1) {};
			\node[head] at (4,1) {};
			\node[head,draw=cbBlue] at (6,2) {};
			\node[rt] at (0,0) {};
			\node[lb,below left,inner sep=1.5pt] at (0,0) {$f$};
			\node[lb,left,text=cbVerm] at (5,1.5) {$\sigma$};
			\node[lb,right,text=cbBlue] at (6,1.5) {$E$};
			\node[lb] at (3.5,-0.45) {$J$};
			\node[lb,right,text=black!55] at (3,2.8) {$T$};
		\end{tikzpicture}
		\caption{The Jordan curve in the proof of
			Lemma~\ref{lem:square-active-list}.  Light
			gray arrows: the first-visit tree $T$.  The pale band marks the path in
			$T$ between the tails of $\sigma$ and $E$, which with the dashed arc
			inside their square forms $J$.  Orange dashed: $\sigma$, tested closed.
			Blue: the current edge $E$, whose head lies in the unbounded component of
			$\R^2\setminus J$.  Black: the other active edges, whose heads lie in the
			shaded bounded component.}
		\label{fig:square-jordan}
	\end{figure}
	
		Since $E$ remains active, its head lies in the infinite component of the
		complement of the visited set.  It can therefore be joined to infinity through
		unvisited faces.  Such a path crosses neither the interior arc nor $P$, whose
		vertices are visited, and hence does not meet $J$.  Thus the head of $E$ lies
		in the unbounded component of $\R^2\setminus J$.  Every other active head lies
		in the bounded component and therefore in a finite component of the complement
		of the visited set.  Its edge has consequently been deleted, leaving active
		list $(E)$.

		For the second assertion, suppose that both $W$ and $S$ were tested closed.
		Then $W$ was tested first: the tail of $S$ is the head of $W$, so testing $S$
		first would delete $W$.  At the later test of $S$, relabel the square with $S$
		as the current edge.  The old $W$ is now its $S$-side, so the first assertion
		gives active list $(S)$.  Closing $S$ then terminates the exploration before
		$E$ is tested, a contradiction.
	\end{proof}
	
	\begin{lemma}[Forced tests]
		\label{lem:square-forced-tests}
		Let $K$ be the number of forced tests.  There are constants $c,C>0$ such
		that, for every integer $m\geq1$,
		\begin{equation}\label{eq:square-forced-tail}
			\P_0\{K\geq m\}\leq Ce^{-cm}\, .
		\end{equation}
	\end{lemma}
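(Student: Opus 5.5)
The plan is to exploit the bottleneck structure from Lemma~\ref{lem:square-active-list}. Immediately before every forced test the active list is exactly $(E)$. The whole future of the exploration therefore hangs on the single edge $E$ and on the three edges prepended after $E$ is tested open, which it must be. I would show that, conditional on the entire history up to and including a forced test, the probability that a further forced test ever occurs is at most some universal $1-c<1$. Iterating with the strong Markov property at the successive forced times then gives $\P_0\{K\geq m\}\leq(1-c)^{m-1}$, which is \eqref{eq:square-forced-tail}.

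To obtain the uniform $c$, I will look at the state right after a forced test of $E$, when the active list consists of the three edges leaving the head $h$ of $E$ in right-turn, straight, left-turn order. The first task is to classify which of these children can themselves be forced, using the relabelled squares: a child leaving $h$ is forced exactly when the opposite side of its square was already tested closed. Here I would use the second assertion of Lemma~\ref{lem:square-active-list}, that at most one of $W,S$ was tested closed, together with the planar constraint from its proof, to show that at most a bounded number of the children, possibly none, can be forced.

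The next task is to exhibit an explicit finite sequence of at most $C_1$ tests from this state, each unforced and hence open with conditional probability exactly $1/2$ by Lemma~\ref{lem:square-exploration}\textup{(iii)}. If all of these tests come out closed, the exploration must terminate with no further forced test. The mechanism is that once the children out of $h$ are closed (together with, if needed, the children of a forced open child), the active list empties, because every earlier active edge was already deleted at the bottleneck. This gives conditional probability at least $2^{-C_1}$ of stopping. That bound holds uniformly over the history, because Lemma~\ref{lem:square-exploration}\textup{(iii)} gives exact conditional laws given all earlier outcomes, and Lemma~\ref{lem:square-exploration}\textup{(i)} guarantees that no bond is reused.

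The main obstacle is this local case analysis. I must make sure that a forced child cannot trigger an unbounded cascade of further forced tests inside a bounded region before any unforced test occurs. In other words, the sequence of tests has to contain only a bounded number of consecutive forced tests, each of which lands at a new bottleneck. If this fails, I would fall back on making the regeneration step itself the unit of counting. Each forced test is paired with the earlier unforced closed test of its $W$, and by Lemma~\ref{lem:square-active-list} these closed tests are distinct for distinct forced tests. I would then bound $K$ by the number of times the active list collapses to length one immediately after a closed unforced test, and treat each such collapse as an independent fair-coin event that kills the exploration with probability at least $1/2$.
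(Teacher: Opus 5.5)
There is a genuine gap, and it sits at the step you yourself call the main obstacle. Your central claim is that, conditional on the history through a forced test, the probability of \emph{any} further forced test is at most $1-c$. This claim is false.

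Just before a forced test of $g$ the active list is $(g)$. Just after it, the list consists of the surviving children of the head of $g$, with the right turn first. Whether that right turn is forced is already determined by the history: its opposite side either was or was not tested closed earlier. There are configurations of positive probability in which it is forced, and then the very next test is forced with conditional probability one. The paper's proof in fact has to treat three consecutive forced tests tracing three sides of one dual square.

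Consequently:
\begin{itemize}
\item no choice of $C_1$ unforced tests can guarantee termination ``with no further forced test,'' so the bound $2^{-C_1}$ fails;
\item the iteration $\P_0\{K\geq m\}\leq(1-c)^{m-1}$ over single forced tests does not go through, and your parenthetical about ``the children of a forced open child'' already concedes that forced tests can intervene;
\item the fallback does not repair this, because a collapse of the active list to one edge just before a forced test is followed by a test that opens with conditional probability one, so it offers no fair-coin chance of killing the exploration.
\end{itemize}
A minor point: an unforced edge is open with conditional probability $1/2$ or $0$, not exactly $1/2$. Only ``closed with probability at least $1/2$'' is available, though that suffices.

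The missing ingredient is the local structure after a forced test, which bounds the cascades. Let $w$ be the head of the forced edge $g$, let $z$ be the tail of its closed opposite side, and let $h$ be the edge $z\to w$. This edge was active at some point, either initially or when $z$ was first visited. It is not active just before $g$, by Lemma~\ref{lem:square-active-list}. It was not opened, since $w$ is unvisited. It was not deleted, since $g$ has the same head and survives. So $h$ was tested closed.

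Three consequences follow:
\begin{itemize}
\item the left-turn child, which is the reverse of $h$, is deleted;
\item the straight child has $h$ as its $S$-side, so it is never forced, by the second assertion of Lemma~\ref{lem:square-active-list};
\item only the right-turn child can be forced, and if it is, it is again the sole active edge and the same description applies after a rotation.
\end{itemize}
Three successive forced right turns trace three sides of a dual square. The next right turn then returns to the visited tail of the first and is deleted.

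Hence, after any forced test, termination before the third subsequent forced test is ensured by the closure of at most two unforced tests. This has conditional probability at least $1/4$ by Lemma~\ref{lem:square-exploration}\textup{(iii)}. Iterating over blocks of three forced tests gives $\P_0\{K\geq 3k+1\}\leq(3/4)^k$. Your regeneration-at-forced-tests skeleton is the right one, but this bounded-cascade analysis is the actual content of the lemma, and it is absent from your proposal.
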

	
	\begin{proof}
		Fix a forced test and let $g_0$ be its edge.  Condition on the outcomes of all
		tests up to and including it.  We show that the exploration terminates before
		the third forced test after $g_0$ with conditional probability at least $1/4$;
		iterating this bound over successive blocks of three forced tests gives
		\eqref{eq:square-forced-tail}.  We first describe which edges can remain active
		after a forced test, then follow them through three such tests.
		
		Let $g$ be a forced edge obtained from $g_0$ by repeatedly passing to a forced
		right turn, as described below, including $g_0$ itself.  Let $w$ be its head, let $z$ be the tail of the closed
		edge opposite $g$, and let $h$ be the edge $z\to w$.  If $z=f$, then $h$
		belonged to the initial active list.  Otherwise, $h$ was added when $z$ was
		first visited; it is not the reverse of the edge that first visited $z$, since $w$
		is unvisited.  Immediately before $g$ is tested,
		Lemma~\ref{lem:square-active-list} makes it the sole active edge, so $h$ was
		tested or deleted earlier.  It was not opened, since that would have visited
		$w$.  It was not deleted either: had $h$ been deleted, its head $w$ would
		thereafter remain visited or in a finite component of the complement of the
		visited set, whereas $g$, which has the same head, remains active.  Hence $h$
		was tested closed.

		Testing $g$ visits $w$ and adds, in order, the right turn $e$, the straight edge
		$f$, and the reverse of $h$.  The last edge is deleted because its head $z$ is
		visited, so the active list is a sublist of $(e,f)$.  The square of $f$ has $f$ as its east side and $h$
		as its south side.  So if $f$ is tested, then the closed edge $h$ is its
		$S$-side, and Lemma~\ref{lem:square-active-list} shows that $f$ is not forced.  If $e$ is tested and that test is forced, the same lemma
		makes it the sole active edge.  The preceding description then applies after a
		right-angle rotation, with $e$ in the role of $g$.

		Apply this description successively.  Write $e_i$ for the right turn, $f_i$ for
		the straight edge, and $h_i$ for the previously closed edge associated with
		$g_i$, and set $g_{i+1}=e_i$ whenever $e_i$ is tested and forced.  If $g_1$ and
		$g_2$ both occur, the three forced edges trace three sides of one dual square,
		as in Figure~\ref{fig:square-cascade}.
		
		\begin{figure}[H]
			\centering
			\definecolor{cbBlue}{HTML}{0072B2}%
			\definecolor{cbVerm}{HTML}{D55E00}%
			\begin{tikzpicture}[scale=1.45,
				lat/.style={draw=black!16,line width=0.55pt},
				forced/.style={-{Stealth[length=2.8mm]},draw=cbBlue,line width=1.9pt},
				shutedge/.style={-{Stealth[length=2.5mm]},draw=cbVerm,line width=1.3pt,
					dash pattern=on 2.2pt off 1.8pt},
				poss/.style={-{Stealth[length=2.5mm]},draw=black!80,line width=1.2pt},
				del/.style={-{Stealth[length=2.3mm]},draw=black!45,line width=1.0pt,
					dash pattern=on 1pt off 1.6pt},
				hd/.style={circle,fill=black,inner sep=1.3pt},
				lb/.style={font=\small,inner sep=2.5pt}]
				\foreach \i in {-1,0,1,2} \draw[lat] (\i,-1.3) -- (\i,1.7);
				\foreach \j in {-1,0,1} \draw[lat] (-1.3,\j) -- (3.0,\j);
				\draw[forced] (0,0) -- (0,1);
				\draw[forced] (0,1) -- (1,1);
				\draw[forced] (1,1) -- (1,0);
				\draw[shutedge] (2,0) -- (1.06,0);
				\draw[del] (0.94,0) -- (0.06,0);
				\node[font=\scriptsize,text=black!55,fill=white,inner sep=0.8pt]
					at (0.5,0) {$\times$};
				\draw[del] (1,-0.08) .. controls (1.35,-0.46) and (1.65,-0.46)
					.. (2,-0.08);
				\node[font=\scriptsize,text=black!55,fill=white,inner sep=0.8pt]
					at (1.5,-0.39) {$\times$};
				\draw[poss] (1,0) -- (1,-1);
				\node[hd] at (0,0) {};  \node[hd] at (0,1) {};
				\node[hd] at (1,1) {};  \node[hd] at (1,0) {};
				\node[hd] at (2,0) {};
				\node[lb,left,text=cbBlue]  at (0,0.5)  {$g_0$};
				\node[lb,above,text=cbBlue] at (0.5,1)  {$g_1$};
				\node[lb,right,text=cbBlue] at (1,0.5)  {$g_2$};
				\node[lb,above,text=cbVerm] at (1.55,0) {$h_2$};
				\node[lb,left] at (1,-0.62) {$f_2$};
				\node[lb,above right,inner sep=1.5pt] at (1,0) {$w$};
				\node[lb,below,align=center,font=\scriptsize] at (0,-0.12)
					{tail of $g_0$\\(visited)};
				\node[lb,above,align=center,font=\scriptsize] at (2.35,0.08)
					{tail of $h_2$\\(visited)};
			\end{tikzpicture}
			\caption{Three successive forced tests.  The blue edges satisfy $g_1=e_0$
				and $g_2=e_1$.  From the
				head $w$ of $g_2$, the right turn returns to the tail of $g_0$, while
				the left turn reverses the previously closed edge $h_2$.  Both targets
				are visited, so both turns are deleted, shown crossed out, and only
				$f_2$ can remain.}
			\label{fig:square-cascade}
		\end{figure}

		If, at either of the first two stages, $e_i$ is deleted or is tested without
		being forced, require every edge among $e_i$ and $f_i$ that is tested to close.
		These are at most two nonforced tests, and their closure terminates the
		exploration.

		If instead both $e_0$ and $e_1$ are tested and forced, Figure~\ref{fig:square-cascade}
		shows that after $g_2$ is tested only the nonforced edge $f_2$ can remain.  If
		$f_2$ is tested closed, the exploration terminates before a third subsequent
		forced test.  Thus in every case termination is ensured by the closure of at
		most two nonforced tests.  By
		Lemma~\ref{lem:square-exploration}\textup{(iii)}, each closes with conditional
		probability at least $1/2$, so the termination event has conditional probability
		at least $1/4$.

		It follows that, after every forced test, the conditional probability that three
		more forced tests occur before termination is at most $3/4$.  Iterating gives
		$\P_0\{K\geq3k+1\}\leq(3/4)^k$ for every integer $k\geq0$; monotonicity in $m$
		and a change of constants give \eqref{eq:square-forced-tail}.
	\end{proof}
	
	\begin{proof}[Proof of Proposition~\ref{prop:square-passage}]
		By Lemma~\ref{lem:square-forced-tests}, $K<\infty$ almost surely.  In
		chronological order, the nonforced tests before the first forced test, between
		successive forced tests, and after the last forced test form $K+1$ intervals,
		some possibly empty.  Associate the first interval with $f$ and interval $j\geq2$
		with the head of the $(j-1)$st forced edge.  Let $D_j$ be the supremum of the
		$\ell^1$ distances from this face to the faces first visited during interval
		$j$, with $D_j=0$ if the interval is empty.  Let $\mathcal H_1$ be the initial
		trivial sigma-field and, for $j\geq2$, let $\mathcal H_j$ be the sigma-field
		generated by the exploration through the $(j-1)$st forced test.

		\emph{Step~1.}  We prove that, uniformly in $j$,
		\begin{equation}\label{eq:square-interval-tail}
			\P_0\{D_j\geq s\mid\mathcal H_j\}\leq Ce^{-cs}
			\qquad (s\geq1)\, .
		\end{equation}
		Condition on $\mathcal H_j$.  When an untested bond $b$ is selected, let $p_b$
		be its probability of being open given the preceding outcomes, and take an
		independent uniform variable $U_b$.  Declare the test open when $U_b\leq p_b$
		and the corresponding Bernoulli bond open when $U_b\leq1/2$; assign independent
		Bernoulli-$1/2$ states to all other bonds.  This couples the interval to critical
		bond percolation.  No test in the interval is forced, so $p_b\leq1/2$, and every
		edge that first visits a face during the interval is open in the Bernoulli
		percolation.

		By Lemma~\ref{lem:square-active-list}, these first-visit edges form a tree rooted
		at the starting face.  Its root-to-vertex paths are open, directed, and contain
		no translate of $P_\star$.  Lemma~\ref{lem:square-constrained-bonds} therefore
		gives \eqref{eq:square-interval-tail}.

		\emph{Step~2.}  We observe that, for every integer $R\geq1$,
		\begin{equation}\label{eq:square-reach-tail}
			\P_0\left\{K+\sum_{j=1}^{K+1}D_j\geq R\right\}\leq Ce^{-cR}\, .
		\end{equation}
		Indeed, every visited face is at $\ell^1$ distance at most
		$K+\sum_{j=1}^{K+1}D_j$ from $f$.  Lemma~\ref{lem:square-forced-tests}
		and an iteration of the conditional estimate~\eqref{eq:square-interval-tail} therefore give
		\eqref{eq:square-reach-tail}.

		The visited set is therefore almost surely bounded.  Since no bond is tested
		twice, an infinite exploration would leave every finite ball, so the
		exploration terminates almost surely. Lemma~\ref{lem:square-dual-path} then gives
		\eqref{eq:square-live-path-tail}.
	\end{proof}

	\section{A doubly periodic plane graph on which the walk is transient}
	\label{sec:counterexample}
	
	Recall from Section~\ref{ssec:main-results} that $G_M$ is the square
	lattice with $M$ leaves attached to every lattice vertex.  We show that for
	all large $M$ the walk on $G_M$ has $T(1)=\infty$ with positive probability.
	This implies transience of the walk.

	\begin{proof}[Proof of Proposition~\ref{prop:pendant-counterexample}]
		Start at a lattice vertex $o$.  Deleting each two-step visit to a leaf leaves
		a rotor walk on $\Z^2$ whose initial rotors are independent and point west
		with probability $(M+1)/(M+4)$:
		\[
		\P\{\rho(v)=W\}=\frac{M+1}{M+4},
		\qquad
		\P\{\rho(v)=x\}=\frac1{M+4}
		\quad\text{for }x \in\{N,E,S\}\, .
		\]
		The idea now is that a return of this walk to $o$ would produce a
		finite set $U$ at which the steps of the walk sum to
		zero, forcing at least one third of the rotors in $U$ to not point west.  For large $M$ a counting estimate rules this out:
		\[
		\P\left\{\begin{array}{c}
			\text{some finite $\ell^\infty$-connected set $U$ containing $o$}\\
			\text{has at least $|U|/3$ vertices $v$ with $\rho(v)\ne W$}
		\end{array}\right\}
		\leq\sum_{n\geq1}
		\left(128\left(\frac{3}{M+4}\right)^{1/3}\right)^n<1\, .
		\]
		Fix a rotor configuration in the complement of the event in the above display and suppose for sake 
		of contradiction that the induced walk returns to $o$.  Let $F$ be the directed edges traversed up to and including
		its first return, and set
		$k(v)\coloneqq\#\{v\to w\in F\}$
		(Figure~\ref{fig:rotor-excursion}).  By
		Lemma~\ref{lem:one-circuit}, the edges of $F$ are distinct, and every vertex
		has indegree and outdegree $k(v)$, where $0\leq k(v)\leq4$ and $k(o)=1$.
		Let $U$ be the set of vertices that can be reached from $o$ within
		$\{v:1\leq k(v)\leq3\}$ by steps of $\ell^\infty$ distance one.

	\begin{figure}[!ht]
		\centering
		\begin{tikzpicture}[scale=1.75,
			prim/.style={draw=black!20,line width=0.6pt},
			vert/.style={circle,fill=black,inner sep=1.7pt},
			rot/.style={-{Stealth[length=2.3mm]},draw=black,line width=1.0pt},
			fe/.style={-{Stealth[length=2.4mm]},draw=RoyalBlue,line width=1.3pt},
			num/.style={font=\tiny,inner sep=0.6pt,fill=white,text=RoyalBlue}]
			\foreach \x in {0,1,2,3} \draw[prim] (\x,-0.5) -- (\x,2.5);
			\foreach \y in {0,1,2} \draw[prim] (-0.5,\y) -- (3.5,\y);
			\draw[fe] (0.82,0) -- (0.18,0);      \node[num] at (0.5,0.17)  {1};
			\draw[fe] (0,0.18) -- (0,0.82);      \node[num] at (0.17,0.5)  {2};
			\draw[fe] (0.18,1) -- (0.82,1);      \node[num] at (0.5,1.17)  {3};
			\draw[fe] (1,0.82) -- (1,0.18);      \node[num] at (1.17,0.5)  {12};
			\draw[fe] (1.18,1.08) -- (1.82,1.08);  \node[num] at (1.5,1.26)  {4};
			\draw[fe] (1.82,0.92) -- (1.18,0.92);  \node[num] at (1.5,0.74)  {11};
			\draw[fe] (1.92,1.18) -- (1.92,1.82);  \node[num] at (1.72,1.5)  {5};
			\draw[fe] (2.08,1.82) -- (2.08,1.18);  \node[num] at (2.28,1.5)  {6};
			\draw[fe] (2.18,1.08) -- (2.82,1.08);  \node[num] at (2.5,1.26)  {7};
			\draw[fe] (2.82,0.92) -- (2.18,0.92);  \node[num] at (2.5,0.74)  {8};
			\draw[fe] (2.08,0.82) -- (2.08,0.18);  \node[num] at (2.28,0.5)  {9};
			\draw[fe] (1.92,0.18) -- (1.92,0.82);  \node[num] at (1.72,0.5)  {10};
			\draw[rot] (1,-0.07) -- (1,-0.36);
			\draw[rot] (-0.07,0) -- (-0.36,0);
			\draw[rot] (1.93,0) -- (1.64,0);
			\draw[rot] (3,0.07) -- (3,0.36);
			\draw[rot] (0,1.07) -- (0,1.36);
			\draw[rot] (1,1.07) -- (1,1.36);
			\draw[rot] (1.93,1) -- (1.68,1);
			\draw[rot] (3,0.93) -- (3,0.64);
			\draw[rot] (0.07,2) -- (0.36,2);
			\draw[rot] (1,2.07) -- (1,2.36);
			\draw[rot] (2.07,2) -- (2.36,2);
			\draw[rot] (3,1.93) -- (3,1.64);
			\foreach \x in {0,1,2,3} \foreach \y in {0,1,2} \node[vert] at (\x,\y) {};
			\node[font=\small] at (1.27,-0.19) {$o$};
			\draw[rot] (0.05,-0.95) -- (0.42,-0.95);
			\node[right,font=\scriptsize,inner sep=2.5pt] at (0.42,-0.95) {initial rotor};
			\draw[fe] (1.85,-0.95) -- (2.22,-0.95);
			\node[right,font=\scriptsize,inner sep=2.5pt] at (2.22,-0.95) {traversed edge};
		\end{tikzpicture}
		\caption{An initial rotor configuration in black, and in blue the directed
			edges $F$ traversed by the walk from $o$ up to its first return to $o$,
			numbered in the order they are traversed.}
		\label{fig:rotor-excursion}
	\end{figure}

		We claim that 
		\begin{equation}\label{eq:boundary-cancellation}
			\sum_{v\in U}\sum_{v\to w\in F}(w-v)=0\, .
		\end{equation}
		Assuming~\eqref{eq:boundary-cancellation}, the edges in $F$ with tail $v\in U$
		are the first $k(v)$ edges after $\rho(v)$. Taking the sum of the two coordinates in
		\eqref{eq:boundary-cancellation}, a vertex with $\rho(v)=W$ contributes
		$1,2,1$ for $k(v)=1,2,3$, respectively, while every other vertex contributes
		at least $-2$.  Hence
		\[
		0\geq |U|-3\bigl|\{v\in U:\rho(v)\ne W\}\bigr|>0\, ,
		\]
		a contradiction. 

		It remains to prove \eqref{eq:boundary-cancellation}.  Write
		$H\coloneqq\{v:k(v)=4\}$.  Every $F$-edge with exactly one endpoint in $U$
		joins $U$ to $H$.  Each vertex of $H$ uses every outgoing edge, so every edge
		between $H$ and $U$ is traversed from $H$ to $U$.  Since $F$ has equal
		indegree and outdegree at every vertex, each such edge is also traversed from
		$U$ to $H$.  Removing these paired edges leaves a balanced directed graph on
		$U$, and the vectors $w-v$ over its edges sum to zero.  Hence the left-hand
		side of \eqref{eq:boundary-cancellation} equals
		\[
			\sum_{\substack{v\in U,\ h\in H\\ v\sim h}}(h-v)\, .
		\]
		No vertex of $H$ neighbors a vertex with $k=0$, since it uses every outgoing
		edge.  By the definition of $U$, the vectors in the last display are therefore
		the inward unit normals along a union of complete components of the edge
		boundary of $H$.  Their sum is zero by the discrete divergence theorem, proving
		\eqref{eq:boundary-cancellation}.
	\end{proof}

	\section{Open questions}\label{sec:further-questions}
	Our method reduces Theorem~\ref{thm:main} to the almost-live-path estimate
	\eqref{eq:overview-path-estimate}: on a graph where an almost-live path is unlikely to
	reach distance $R$, Proposition~\ref{prop:path-reduction} gives recurrence, a
	deterministic limit shape, and a deterministic limit for $|R_t|t^{-2/3}$.  The estimate is
	sufficient but not necessary.  As discussed in Problem~\ref{prob:planar} 
	below, numerics suggest there are lattices where live paths percolate, 
	yet the conclusions of Theorem~\ref{thm:main} hold. 

	For some lattices the live-path estimate can be checked directly, as we now sketch. 
	On the Manhattan lattice, drawn on the left in Figure~\ref{fig:directed-lattices}, the mirror representation of \citet{FlorescuLevinePeres2016} and the exponential confinement of \citet[Theorem~1.2]{Li2021} give the required block inequality
	of Section~\ref{sec:block} and hence \eqref{eq:overview-path-estimate}, giving a deterministic limit shape and the
	exact $t^{2/3}$ range law.

	By contrast, on the F-lattice, on the right, the estimate fails and so do the
	conclusions.  There the mirror paths are critical bond-percolation interfaces, so circuit
	estimates give passage of order $\log R$ to distance $R$.  The inner and outer
	radii after $n$ circuits grow exponentially at different rates, so the shape
	degenerates and $\lim_{t\to\infty}\log|R_t|/\log t=1$.

	\begin{figure}[!ht]
		\centering
		\begin{tikzpicture}[scale=1.15,
			ev/.style={-{Stealth[length=2.1mm]},draw=RoyalBlue,line width=0.95pt},
			od/.style={-{Stealth[length=2.1mm]},draw=BrickRed,line width=0.95pt},
			v/.style={circle,fill=black,inner sep=1.3pt},
			vev/.style={circle,fill=RoyalBlue,inner sep=1.3pt},
			vod/.style={circle,fill=BrickRed,inner sep=1.3pt}]
			\begin{scope}[xshift=0pt]
				\draw[ev] (0.17,0.00) -- (0.83,0.00);
				\draw[ev] (1.17,0.00) -- (1.83,0.00);
				\draw[ev] (2.17,0.00) -- (2.83,0.00);
				\draw[od] (0.83,1.00) -- (0.17,1.00);
				\draw[od] (1.83,1.00) -- (1.17,1.00);
				\draw[od] (2.83,1.00) -- (2.17,1.00);
				\draw[ev] (0.17,2.00) -- (0.83,2.00);
				\draw[ev] (1.17,2.00) -- (1.83,2.00);
				\draw[ev] (2.17,2.00) -- (2.83,2.00);
				\draw[od] (0.83,3.00) -- (0.17,3.00);
				\draw[od] (1.83,3.00) -- (1.17,3.00);
				\draw[od] (2.83,3.00) -- (2.17,3.00);
				\draw[ev] (0.00,0.83) -- (0.00,0.17);
				\draw[ev] (0.00,1.83) -- (0.00,1.17);
				\draw[ev] (0.00,2.83) -- (0.00,2.17);
				\draw[od] (1.00,0.17) -- (1.00,0.83);
				\draw[od] (1.00,1.17) -- (1.00,1.83);
				\draw[od] (1.00,2.17) -- (1.00,2.83);
				\draw[ev] (2.00,0.83) -- (2.00,0.17);
				\draw[ev] (2.00,1.83) -- (2.00,1.17);
				\draw[ev] (2.00,2.83) -- (2.00,2.17);
				\draw[od] (3.00,0.17) -- (3.00,0.83);
				\draw[od] (3.00,1.17) -- (3.00,1.83);
				\draw[od] (3.00,2.17) -- (3.00,2.83);
				\node[v] at (0,0) {};
				\node[v] at (0,1) {};
				\node[v] at (0,2) {};
				\node[v] at (0,3) {};
				\node[v] at (1,0) {};
				\node[v] at (1,1) {};
				\node[v] at (1,2) {};
				\node[v] at (1,3) {};
				\node[v] at (2,0) {};
				\node[v] at (2,1) {};
				\node[v] at (2,2) {};
				\node[v] at (2,3) {};
				\node[v] at (3,0) {};
				\node[v] at (3,1) {};
				\node[v] at (3,2) {};
				\node[v] at (3,3) {};
			\end{scope}
			\begin{scope}[xshift=0.5\textwidth]
				\draw[od] (0.83,0.00) -- (0.17,0.00);
				\draw[od] (1.17,0.00) -- (1.83,0.00);
				\draw[od] (2.83,0.00) -- (2.17,0.00);
				\draw[od] (0.17,1.00) -- (0.83,1.00);
				\draw[od] (1.83,1.00) -- (1.17,1.00);
				\draw[od] (2.17,1.00) -- (2.83,1.00);
				\draw[od] (0.83,2.00) -- (0.17,2.00);
				\draw[od] (1.17,2.00) -- (1.83,2.00);
				\draw[od] (2.83,2.00) -- (2.17,2.00);
				\draw[od] (0.17,3.00) -- (0.83,3.00);
				\draw[od] (1.83,3.00) -- (1.17,3.00);
				\draw[od] (2.17,3.00) -- (2.83,3.00);
				\draw[ev] (0.00,0.17) -- (0.00,0.83);
				\draw[ev] (0.00,1.83) -- (0.00,1.17);
				\draw[ev] (0.00,2.17) -- (0.00,2.83);
				\draw[ev] (1.00,0.83) -- (1.00,0.17);
				\draw[ev] (1.00,1.17) -- (1.00,1.83);
				\draw[ev] (1.00,2.83) -- (1.00,2.17);
				\draw[ev] (2.00,0.17) -- (2.00,0.83);
				\draw[ev] (2.00,1.83) -- (2.00,1.17);
				\draw[ev] (2.00,2.17) -- (2.00,2.83);
				\draw[ev] (3.00,0.83) -- (3.00,0.17);
				\draw[ev] (3.00,1.17) -- (3.00,1.83);
				\draw[ev] (3.00,2.83) -- (3.00,2.17);
				\node[vev] at (0,0) {};
				\node[vod] at (0,1) {};
				\node[vev] at (0,2) {};
				\node[vod] at (0,3) {};
				\node[vod] at (1,0) {};
				\node[vev] at (1,1) {};
				\node[vod] at (1,2) {};
				\node[vev] at (1,3) {};
				\node[vev] at (2,0) {};
				\node[vod] at (2,1) {};
				\node[vev] at (2,2) {};
				\node[vod] at (2,3) {};
				\node[vod] at (3,0) {};
				\node[vev] at (3,1) {};
				\node[vod] at (3,2) {};
				\node[vev] at (3,3) {};
			\end{scope}
		\end{tikzpicture}
		\caption{The Manhattan lattice, left, and the F-lattice, right, with even in
			blue and odd in red.  The rows and columns of the Manhattan lattice
			alternate in direction with the parity of their index, and each vertex of
			the F-lattice emits its two edges vertically or horizontally according to
			the parity of $v_1+v_2$.}
		\label{fig:directed-lattices}
	\end{figure}

	\begin{problem}[Other planar lattices]\label{prob:planar}
		For which doubly periodic plane graphs does Theorem~\ref{thm:main} hold?  The
		graph $G_M$ of Proposition~\ref{prop:pendant-counterexample} gives a
		counterexample in general. However, $G_M$ does not have a harmonic embedding
		as a plane graph.  Recall that an embedding is harmonic when every
		vertex is the barycenter of its neighbors. We expect Theorem~\ref{thm:main} to hold for harmonic embeddings of doubly
		periodic plane graphs.  Simulations suggest that the rotor mechanism matters as well:
		Figure~\ref{fig:tetrakis-orders} shows that on the tetrakis square lattice the
		clockwise order gives range consistent with the $t^{2/3}$ law, while another
		order on the same graph appears to give linear range.
		
		The main difficulty is that live paths percolate on some planar lattices.  Even on
		the triangular lattice with the clockwise mechanism, and on the square lattice
		with cyclic order $N,S,E,W$, an infinite live path exists with positive
		probability, so Proposition~\ref{prop:live-recurrence} does not apply.
		
		We further caution that recurrence is hard to detect in simulations.  Indeed,
		for critical uniform rotors on the binary tree,
		\citet[Theorem~7\textup{(i)}]{AngelHolroyd2011} prove recurrence although the
		walk reaches depth doubly exponential in $n$ before its $n$th return to the
			root.
		\end{problem}

	\begin{problem}[Monotonicity in the bias]
		Let the initial rotors on the clockwise square lattice be independent, and
		point west with probability $p$ and in each other direction with probability
		$(1-p)/3$.
		Proposition~\ref{prop:small-perturbations} gives recurrence for $p$ small, while the argument of Section~\ref{sec:counterexample} gives transience
		for $p$ near $1$.  Is there a single critical $p$ separating recurrence from
		transience?  More generally, which probability vectors on the four directions
		give a recurrent walk?  We conjecture that they form a set that is star-shaped
		about the uniform vector.  Is it convex?
	\end{problem}

	\begin{problem}[Roundness of the limit shape]
		Is the limit shape $B$ in Theorem~\ref{thm:main} a Euclidean disk on the square
		lattice?  \citet{KapriDhar2009} conjectured this via simulation. Our simulations agree, and suggest that the
		honeycomb, Manhattan, and triangular limit shapes are also disks. 
	\end{problem}

	\begin{problem}[Diffusive scaling in higher dimensions]
		For a fixed translation-invariant rotor mechanism on $\Z^d$ with $d>2$ and
		independent uniform initial rotors, does the walk converge to Brownian motion
		under diffusive scaling?  \citet{PriezzhevDharDharKrishnamurthy1996} predicted
		diffusive mean-square displacement in every dimension $d>2$.
	\end{problem}

		\begin{figure}[H]
		\centering
		\includegraphics[width=0.88\textwidth]{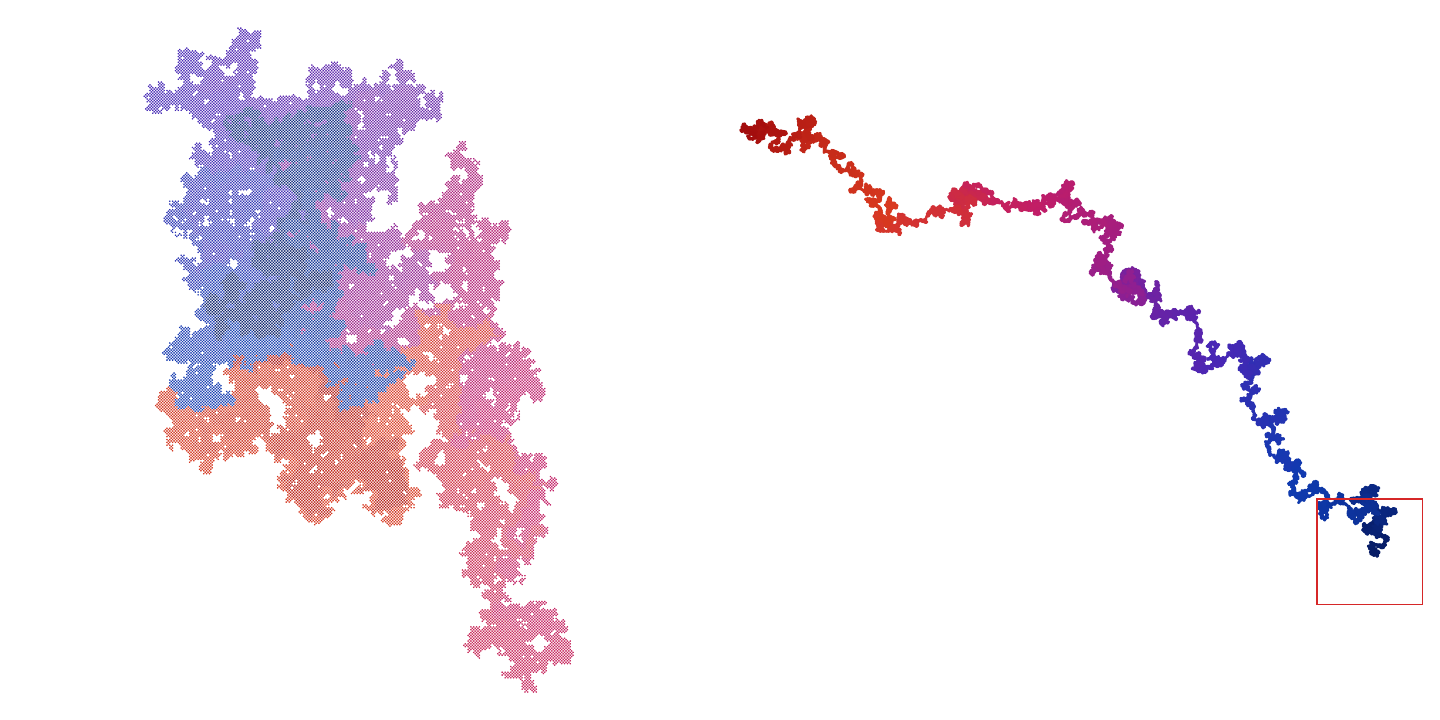}
		\\[6pt]
		\begin{tikzpicture}[scale=1.08,
			lat/.style={draw=black!22,line width=0.6pt},
			big/.style={circle,fill=black,inner sep=1.9pt},
			small/.style={circle,fill=black!45,inner sep=1.2pt},
			ed/.style={-{Stealth[length=2.1mm]},draw=RoyalBlue,line width=1.0pt},
			ed2/.style={-{Stealth[length=2.1mm]},draw=BrickRed,line width=1.0pt},
			num/.style={font=\scriptsize,inner sep=0.8pt,fill=white,text=RoyalBlue},
			num2/.style={font=\scriptsize,inner sep=0.8pt,fill=white,text=BrickRed}]
			\begin{scope}[xshift=0pt-1.46cm]
				\clip (-1.10,-1.10) rectangle (1.10,1.10);
				\draw[lat] (-3.0,-2) -- (-3.0,2);
				\draw[lat] (-2,-3.0) -- (2,-3.0);
				\draw[lat] (-2.0,-2) -- (-2.0,2);
				\draw[lat] (-2,-2.0) -- (2,-2.0);
				\draw[lat] (-1.0,-2) -- (-1.0,2);
				\draw[lat] (-2,-1.0) -- (2,-1.0);
				\draw[lat] (0.0,-2) -- (0.0,2);
				\draw[lat] (-2,0.0) -- (2,0.0);
				\draw[lat] (1.0,-2) -- (1.0,2);
				\draw[lat] (-2,1.0) -- (2,1.0);
				\draw[lat] (2.0,-2) -- (2.0,2);
				\draw[lat] (-2,2.0) -- (2,2.0);
				\draw[lat] (3.0,-2) -- (3.0,2);
				\draw[lat] (-2,3.0) -- (2,3.0);
				\draw[lat] (-1.5,-1.5) -- (-2.0,-2.0);
				\draw[lat] (-1.5,-1.5) -- (-1.0,-2.0);
				\draw[lat] (-1.5,-1.5) -- (-1.0,-1.0);
				\draw[lat] (-1.5,-1.5) -- (-2.0,-1.0);
				\draw[lat] (-1.5,-0.5) -- (-2.0,-1.0);
				\draw[lat] (-1.5,-0.5) -- (-1.0,-1.0);
				\draw[lat] (-1.5,-0.5) -- (-1.0,0.0);
				\draw[lat] (-1.5,-0.5) -- (-2.0,0.0);
				\draw[lat] (-1.5,0.5) -- (-2.0,0.0);
				\draw[lat] (-1.5,0.5) -- (-1.0,0.0);
				\draw[lat] (-1.5,0.5) -- (-1.0,1.0);
				\draw[lat] (-1.5,0.5) -- (-2.0,1.0);
				\draw[lat] (-1.5,1.5) -- (-2.0,1.0);
				\draw[lat] (-1.5,1.5) -- (-1.0,1.0);
				\draw[lat] (-1.5,1.5) -- (-1.0,2.0);
				\draw[lat] (-1.5,1.5) -- (-2.0,2.0);
				\draw[lat] (-0.5,-1.5) -- (-1.0,-2.0);
				\draw[lat] (-0.5,-1.5) -- (0.0,-2.0);
				\draw[lat] (-0.5,-1.5) -- (0.0,-1.0);
				\draw[lat] (-0.5,-1.5) -- (-1.0,-1.0);
				\draw[lat] (-0.5,-0.5) -- (-1.0,-1.0);
				\draw[lat] (-0.5,-0.5) -- (0.0,-1.0);
				\draw[lat] (-0.5,-0.5) -- (0.0,0.0);
				\draw[lat] (-0.5,-0.5) -- (-1.0,0.0);
				\draw[lat] (-0.5,0.5) -- (-1.0,0.0);
				\draw[lat] (-0.5,0.5) -- (0.0,0.0);
				\draw[lat] (-0.5,0.5) -- (0.0,1.0);
				\draw[lat] (-0.5,0.5) -- (-1.0,1.0);
				\draw[lat] (-0.5,1.5) -- (-1.0,1.0);
				\draw[lat] (-0.5,1.5) -- (0.0,1.0);
				\draw[lat] (-0.5,1.5) -- (0.0,2.0);
				\draw[lat] (-0.5,1.5) -- (-1.0,2.0);
				\draw[lat] (0.5,-1.5) -- (0.0,-2.0);
				\draw[lat] (0.5,-1.5) -- (1.0,-2.0);
				\draw[lat] (0.5,-1.5) -- (1.0,-1.0);
				\draw[lat] (0.5,-1.5) -- (0.0,-1.0);
				\draw[lat] (0.5,-0.5) -- (0.0,-1.0);
				\draw[lat] (0.5,-0.5) -- (1.0,-1.0);
				\draw[lat] (0.5,-0.5) -- (1.0,0.0);
				\draw[lat] (0.5,-0.5) -- (0.0,0.0);
				\draw[lat] (0.5,0.5) -- (0.0,0.0);
				\draw[lat] (0.5,0.5) -- (1.0,0.0);
				\draw[lat] (0.5,0.5) -- (1.0,1.0);
				\draw[lat] (0.5,0.5) -- (0.0,1.0);
				\draw[lat] (0.5,1.5) -- (0.0,1.0);
				\draw[lat] (0.5,1.5) -- (1.0,1.0);
				\draw[lat] (0.5,1.5) -- (1.0,2.0);
				\draw[lat] (0.5,1.5) -- (0.0,2.0);
				\draw[lat] (1.5,-1.5) -- (1.0,-2.0);
				\draw[lat] (1.5,-1.5) -- (2.0,-2.0);
				\draw[lat] (1.5,-1.5) -- (2.0,-1.0);
				\draw[lat] (1.5,-1.5) -- (1.0,-1.0);
				\draw[lat] (1.5,-0.5) -- (1.0,-1.0);
				\draw[lat] (1.5,-0.5) -- (2.0,-1.0);
				\draw[lat] (1.5,-0.5) -- (2.0,0.0);
				\draw[lat] (1.5,-0.5) -- (1.0,0.0);
				\draw[lat] (1.5,0.5) -- (1.0,0.0);
				\draw[lat] (1.5,0.5) -- (2.0,0.0);
				\draw[lat] (1.5,0.5) -- (2.0,1.0);
				\draw[lat] (1.5,0.5) -- (1.0,1.0);
				\draw[lat] (1.5,1.5) -- (1.0,1.0);
				\draw[lat] (1.5,1.5) -- (2.0,1.0);
				\draw[lat] (1.5,1.5) -- (2.0,2.0);
				\draw[lat] (1.5,1.5) -- (1.0,2.0);
				\node[small] at (-1.0,-1.0) {};
				\node[small] at (-0.5,-0.5) {};
				\node[small] at (-1.0,0.0) {};
				\node[small] at (-0.5,0.5) {};
				\node[small] at (-1.0,1.0) {};
				\node[small] at (0.0,-1.0) {};
				\node[small] at (0.5,-0.5) {};
				\node[small] at (0.0,0.0) {};
				\node[small] at (0.5,0.5) {};
				\node[small] at (0.0,1.0) {};
				\node[small] at (1.0,-1.0) {};
				\node[small] at (1.0,0.0) {};
				\node[small] at (1.0,1.0) {};
				\draw[ed] (-0.170,0.000) -- (-0.840,0.000);
				\draw[ed] (-0.120,0.120) -- (-0.387,0.387);
				\draw[ed] (0.000,0.170) -- (0.000,0.840);
				\draw[ed] (0.120,0.120) -- (0.387,0.387);
				\draw[ed] (0.170,0.000) -- (0.840,0.000);
				\draw[ed] (0.120,-0.120) -- (0.387,-0.387);
				\draw[ed] (0.000,-0.170) -- (0.000,-0.840);
				\draw[ed] (-0.120,-0.120) -- (-0.387,-0.387);
				\node[big] at (0,0) {};
				\node[num] at (-0.420,0.000) {1};
				\node[num] at (-0.297,0.297) {2};
				\node[num] at (0.000,0.420) {3};
				\node[num] at (0.297,0.297) {4};
				\node[num] at (0.420,0.000) {5};
				\node[num] at (0.297,-0.297) {6};
				\node[num] at (0.000,-0.420) {7};
				\node[num] at (-0.297,-0.297) {8};
			\end{scope}
			\begin{scope}[xshift=0pt+1.46cm]
				\clip (-1.10,-1.10) rectangle (1.10,1.10);
				\draw[lat] (-3.5,-2) -- (-3.5,2);
				\draw[lat] (-2,-3.5) -- (2,-3.5);
				\draw[lat] (-2.5,-2) -- (-2.5,2);
				\draw[lat] (-2,-2.5) -- (2,-2.5);
				\draw[lat] (-1.5,-2) -- (-1.5,2);
				\draw[lat] (-2,-1.5) -- (2,-1.5);
				\draw[lat] (-0.5,-2) -- (-0.5,2);
				\draw[lat] (-2,-0.5) -- (2,-0.5);
				\draw[lat] (0.5,-2) -- (0.5,2);
				\draw[lat] (-2,0.5) -- (2,0.5);
				\draw[lat] (1.5,-2) -- (1.5,2);
				\draw[lat] (-2,1.5) -- (2,1.5);
				\draw[lat] (2.5,-2) -- (2.5,2);
				\draw[lat] (-2,2.5) -- (2,2.5);
				\draw[lat] (-2.0,-2.0) -- (-2.5,-2.5);
				\draw[lat] (-2.0,-2.0) -- (-1.5,-2.5);
				\draw[lat] (-2.0,-2.0) -- (-1.5,-1.5);
				\draw[lat] (-2.0,-2.0) -- (-2.5,-1.5);
				\draw[lat] (-2.0,-1.0) -- (-2.5,-1.5);
				\draw[lat] (-2.0,-1.0) -- (-1.5,-1.5);
				\draw[lat] (-2.0,-1.0) -- (-1.5,-0.5);
				\draw[lat] (-2.0,-1.0) -- (-2.5,-0.5);
				\draw[lat] (-2.0,0.0) -- (-2.5,-0.5);
				\draw[lat] (-2.0,0.0) -- (-1.5,-0.5);
				\draw[lat] (-2.0,0.0) -- (-1.5,0.5);
				\draw[lat] (-2.0,0.0) -- (-2.5,0.5);
				\draw[lat] (-2.0,1.0) -- (-2.5,0.5);
				\draw[lat] (-2.0,1.0) -- (-1.5,0.5);
				\draw[lat] (-2.0,1.0) -- (-1.5,1.5);
				\draw[lat] (-2.0,1.0) -- (-2.5,1.5);
				\draw[lat] (-2.0,2.0) -- (-2.5,1.5);
				\draw[lat] (-2.0,2.0) -- (-1.5,1.5);
				\draw[lat] (-2.0,2.0) -- (-1.5,2.5);
				\draw[lat] (-2.0,2.0) -- (-2.5,2.5);
				\draw[lat] (-1.0,-2.0) -- (-1.5,-2.5);
				\draw[lat] (-1.0,-2.0) -- (-0.5,-2.5);
				\draw[lat] (-1.0,-2.0) -- (-0.5,-1.5);
				\draw[lat] (-1.0,-2.0) -- (-1.5,-1.5);
				\draw[lat] (-1.0,-1.0) -- (-1.5,-1.5);
				\draw[lat] (-1.0,-1.0) -- (-0.5,-1.5);
				\draw[lat] (-1.0,-1.0) -- (-0.5,-0.5);
				\draw[lat] (-1.0,-1.0) -- (-1.5,-0.5);
				\draw[lat] (-1.0,0.0) -- (-1.5,-0.5);
				\draw[lat] (-1.0,0.0) -- (-0.5,-0.5);
				\draw[lat] (-1.0,0.0) -- (-0.5,0.5);
				\draw[lat] (-1.0,0.0) -- (-1.5,0.5);
				\draw[lat] (-1.0,1.0) -- (-1.5,0.5);
				\draw[lat] (-1.0,1.0) -- (-0.5,0.5);
				\draw[lat] (-1.0,1.0) -- (-0.5,1.5);
				\draw[lat] (-1.0,1.0) -- (-1.5,1.5);
				\draw[lat] (-1.0,2.0) -- (-1.5,1.5);
				\draw[lat] (-1.0,2.0) -- (-0.5,1.5);
				\draw[lat] (-1.0,2.0) -- (-0.5,2.5);
				\draw[lat] (-1.0,2.0) -- (-1.5,2.5);
				\draw[lat] (0.0,-2.0) -- (-0.5,-2.5);
				\draw[lat] (0.0,-2.0) -- (0.5,-2.5);
				\draw[lat] (0.0,-2.0) -- (0.5,-1.5);
				\draw[lat] (0.0,-2.0) -- (-0.5,-1.5);
				\draw[lat] (0.0,-1.0) -- (-0.5,-1.5);
				\draw[lat] (0.0,-1.0) -- (0.5,-1.5);
				\draw[lat] (0.0,-1.0) -- (0.5,-0.5);
				\draw[lat] (0.0,-1.0) -- (-0.5,-0.5);
				\draw[lat] (0.0,0.0) -- (-0.5,-0.5);
				\draw[lat] (0.0,0.0) -- (0.5,-0.5);
				\draw[lat] (0.0,0.0) -- (0.5,0.5);
				\draw[lat] (0.0,0.0) -- (-0.5,0.5);
				\draw[lat] (0.0,1.0) -- (-0.5,0.5);
				\draw[lat] (0.0,1.0) -- (0.5,0.5);
				\draw[lat] (0.0,1.0) -- (0.5,1.5);
				\draw[lat] (0.0,1.0) -- (-0.5,1.5);
				\draw[lat] (0.0,2.0) -- (-0.5,1.5);
				\draw[lat] (0.0,2.0) -- (0.5,1.5);
				\draw[lat] (0.0,2.0) -- (0.5,2.5);
				\draw[lat] (0.0,2.0) -- (-0.5,2.5);
				\draw[lat] (1.0,-2.0) -- (0.5,-2.5);
				\draw[lat] (1.0,-2.0) -- (1.5,-2.5);
				\draw[lat] (1.0,-2.0) -- (1.5,-1.5);
				\draw[lat] (1.0,-2.0) -- (0.5,-1.5);
				\draw[lat] (1.0,-1.0) -- (0.5,-1.5);
				\draw[lat] (1.0,-1.0) -- (1.5,-1.5);
				\draw[lat] (1.0,-1.0) -- (1.5,-0.5);
				\draw[lat] (1.0,-1.0) -- (0.5,-0.5);
				\draw[lat] (1.0,0.0) -- (0.5,-0.5);
				\draw[lat] (1.0,0.0) -- (1.5,-0.5);
				\draw[lat] (1.0,0.0) -- (1.5,0.5);
				\draw[lat] (1.0,0.0) -- (0.5,0.5);
				\draw[lat] (1.0,1.0) -- (0.5,0.5);
				\draw[lat] (1.0,1.0) -- (1.5,0.5);
				\draw[lat] (1.0,1.0) -- (1.5,1.5);
				\draw[lat] (1.0,1.0) -- (0.5,1.5);
				\draw[lat] (1.0,2.0) -- (0.5,1.5);
				\draw[lat] (1.0,2.0) -- (1.5,1.5);
				\draw[lat] (1.0,2.0) -- (1.5,2.5);
				\draw[lat] (1.0,2.0) -- (0.5,2.5);
				\draw[lat] (2.0,-2.0) -- (1.5,-2.5);
				\draw[lat] (2.0,-2.0) -- (2.5,-2.5);
				\draw[lat] (2.0,-2.0) -- (2.5,-1.5);
				\draw[lat] (2.0,-2.0) -- (1.5,-1.5);
				\draw[lat] (2.0,-1.0) -- (1.5,-1.5);
				\draw[lat] (2.0,-1.0) -- (2.5,-1.5);
				\draw[lat] (2.0,-1.0) -- (2.5,-0.5);
				\draw[lat] (2.0,-1.0) -- (1.5,-0.5);
				\draw[lat] (2.0,0.0) -- (1.5,-0.5);
				\draw[lat] (2.0,0.0) -- (2.5,-0.5);
				\draw[lat] (2.0,0.0) -- (2.5,0.5);
				\draw[lat] (2.0,0.0) -- (1.5,0.5);
				\draw[lat] (2.0,1.0) -- (1.5,0.5);
				\draw[lat] (2.0,1.0) -- (2.5,0.5);
				\draw[lat] (2.0,1.0) -- (2.5,1.5);
				\draw[lat] (2.0,1.0) -- (1.5,1.5);
				\draw[lat] (2.0,2.0) -- (1.5,1.5);
				\draw[lat] (2.0,2.0) -- (2.5,1.5);
				\draw[lat] (2.0,2.0) -- (2.5,2.5);
				\draw[lat] (2.0,2.0) -- (1.5,2.5);
				\node[small] at (-1.0,-1.0) {};
				\node[small] at (-1.0,0.0) {};
				\node[small] at (-1.0,1.0) {};
				\node[small] at (0.0,-1.0) {};
				\node[small] at (-0.5,-0.5) {};
				\node[small] at (0.0,0.0) {};
				\node[small] at (-0.5,0.5) {};
				\node[small] at (0.0,1.0) {};
				\node[small] at (1.0,-1.0) {};
				\node[small] at (0.5,-0.5) {};
				\node[small] at (1.0,0.0) {};
				\node[small] at (0.5,0.5) {};
				\node[small] at (1.0,1.0) {};
				\draw[ed2] (-0.120,0.120) -- (-0.387,0.387);
				\draw[ed2] (0.120,0.120) -- (0.387,0.387);
				\draw[ed2] (0.120,-0.120) -- (0.387,-0.387);
				\draw[ed2] (-0.120,-0.120) -- (-0.387,-0.387);
				\node[big] at (0,0) {};
				\node[num2] at (-0.283,0.283) {1};
				\node[num2] at (0.283,0.283) {2};
				\node[num2] at (0.283,-0.283) {3};
				\node[num2] at (-0.283,-0.283) {4};
			\end{scope}
			\begin{scope}[xshift=0.44\textwidth-1.46cm]
				\clip (-1.10,-1.10) rectangle (1.10,1.10);
				\draw[lat] (-3.0,-2) -- (-3.0,2);
				\draw[lat] (-2,-3.0) -- (2,-3.0);
				\draw[lat] (-2.0,-2) -- (-2.0,2);
				\draw[lat] (-2,-2.0) -- (2,-2.0);
				\draw[lat] (-1.0,-2) -- (-1.0,2);
				\draw[lat] (-2,-1.0) -- (2,-1.0);
				\draw[lat] (0.0,-2) -- (0.0,2);
				\draw[lat] (-2,0.0) -- (2,0.0);
				\draw[lat] (1.0,-2) -- (1.0,2);
				\draw[lat] (-2,1.0) -- (2,1.0);
				\draw[lat] (2.0,-2) -- (2.0,2);
				\draw[lat] (-2,2.0) -- (2,2.0);
				\draw[lat] (3.0,-2) -- (3.0,2);
				\draw[lat] (-2,3.0) -- (2,3.0);
				\draw[lat] (-1.5,-1.5) -- (-2.0,-2.0);
				\draw[lat] (-1.5,-1.5) -- (-1.0,-2.0);
				\draw[lat] (-1.5,-1.5) -- (-1.0,-1.0);
				\draw[lat] (-1.5,-1.5) -- (-2.0,-1.0);
				\draw[lat] (-1.5,-0.5) -- (-2.0,-1.0);
				\draw[lat] (-1.5,-0.5) -- (-1.0,-1.0);
				\draw[lat] (-1.5,-0.5) -- (-1.0,0.0);
				\draw[lat] (-1.5,-0.5) -- (-2.0,0.0);
				\draw[lat] (-1.5,0.5) -- (-2.0,0.0);
				\draw[lat] (-1.5,0.5) -- (-1.0,0.0);
				\draw[lat] (-1.5,0.5) -- (-1.0,1.0);
				\draw[lat] (-1.5,0.5) -- (-2.0,1.0);
				\draw[lat] (-1.5,1.5) -- (-2.0,1.0);
				\draw[lat] (-1.5,1.5) -- (-1.0,1.0);
				\draw[lat] (-1.5,1.5) -- (-1.0,2.0);
				\draw[lat] (-1.5,1.5) -- (-2.0,2.0);
				\draw[lat] (-0.5,-1.5) -- (-1.0,-2.0);
				\draw[lat] (-0.5,-1.5) -- (0.0,-2.0);
				\draw[lat] (-0.5,-1.5) -- (0.0,-1.0);
				\draw[lat] (-0.5,-1.5) -- (-1.0,-1.0);
				\draw[lat] (-0.5,-0.5) -- (-1.0,-1.0);
				\draw[lat] (-0.5,-0.5) -- (0.0,-1.0);
				\draw[lat] (-0.5,-0.5) -- (0.0,0.0);
				\draw[lat] (-0.5,-0.5) -- (-1.0,0.0);
				\draw[lat] (-0.5,0.5) -- (-1.0,0.0);
				\draw[lat] (-0.5,0.5) -- (0.0,0.0);
				\draw[lat] (-0.5,0.5) -- (0.0,1.0);
				\draw[lat] (-0.5,0.5) -- (-1.0,1.0);
				\draw[lat] (-0.5,1.5) -- (-1.0,1.0);
				\draw[lat] (-0.5,1.5) -- (0.0,1.0);
				\draw[lat] (-0.5,1.5) -- (0.0,2.0);
				\draw[lat] (-0.5,1.5) -- (-1.0,2.0);
				\draw[lat] (0.5,-1.5) -- (0.0,-2.0);
				\draw[lat] (0.5,-1.5) -- (1.0,-2.0);
				\draw[lat] (0.5,-1.5) -- (1.0,-1.0);
				\draw[lat] (0.5,-1.5) -- (0.0,-1.0);
				\draw[lat] (0.5,-0.5) -- (0.0,-1.0);
				\draw[lat] (0.5,-0.5) -- (1.0,-1.0);
				\draw[lat] (0.5,-0.5) -- (1.0,0.0);
				\draw[lat] (0.5,-0.5) -- (0.0,0.0);
				\draw[lat] (0.5,0.5) -- (0.0,0.0);
				\draw[lat] (0.5,0.5) -- (1.0,0.0);
				\draw[lat] (0.5,0.5) -- (1.0,1.0);
				\draw[lat] (0.5,0.5) -- (0.0,1.0);
				\draw[lat] (0.5,1.5) -- (0.0,1.0);
				\draw[lat] (0.5,1.5) -- (1.0,1.0);
				\draw[lat] (0.5,1.5) -- (1.0,2.0);
				\draw[lat] (0.5,1.5) -- (0.0,2.0);
				\draw[lat] (1.5,-1.5) -- (1.0,-2.0);
				\draw[lat] (1.5,-1.5) -- (2.0,-2.0);
				\draw[lat] (1.5,-1.5) -- (2.0,-1.0);
				\draw[lat] (1.5,-1.5) -- (1.0,-1.0);
				\draw[lat] (1.5,-0.5) -- (1.0,-1.0);
				\draw[lat] (1.5,-0.5) -- (2.0,-1.0);
				\draw[lat] (1.5,-0.5) -- (2.0,0.0);
				\draw[lat] (1.5,-0.5) -- (1.0,0.0);
				\draw[lat] (1.5,0.5) -- (1.0,0.0);
				\draw[lat] (1.5,0.5) -- (2.0,0.0);
				\draw[lat] (1.5,0.5) -- (2.0,1.0);
				\draw[lat] (1.5,0.5) -- (1.0,1.0);
				\draw[lat] (1.5,1.5) -- (1.0,1.0);
				\draw[lat] (1.5,1.5) -- (2.0,1.0);
				\draw[lat] (1.5,1.5) -- (2.0,2.0);
				\draw[lat] (1.5,1.5) -- (1.0,2.0);
				\node[small] at (-1.0,-1.0) {};
				\node[small] at (-0.5,-0.5) {};
				\node[small] at (-1.0,0.0) {};
				\node[small] at (-0.5,0.5) {};
				\node[small] at (-1.0,1.0) {};
				\node[small] at (0.0,-1.0) {};
				\node[small] at (0.5,-0.5) {};
				\node[small] at (0.0,0.0) {};
				\node[small] at (0.5,0.5) {};
				\node[small] at (0.0,1.0) {};
				\node[small] at (1.0,-1.0) {};
				\node[small] at (1.0,0.0) {};
				\node[small] at (1.0,1.0) {};
				\draw[ed] (-0.170,0.000) -- (-0.840,0.000);
				\draw[ed] (0.000,-0.170) -- (0.000,-0.840);
				\draw[ed] (-0.120,-0.120) -- (-0.387,-0.387);
				\draw[ed] (0.000,0.170) -- (0.000,0.840);
				\draw[ed] (0.170,0.000) -- (0.840,0.000);
				\draw[ed] (0.120,-0.120) -- (0.387,-0.387);
				\draw[ed] (-0.120,0.120) -- (-0.387,0.387);
				\draw[ed] (0.120,0.120) -- (0.387,0.387);
				\node[big] at (0,0) {};
				\node[num] at (-0.420,0.000) {1};
				\node[num] at (0.000,-0.420) {2};
				\node[num] at (-0.297,-0.297) {3};
				\node[num] at (0.000,0.420) {4};
				\node[num] at (0.420,0.000) {5};
				\node[num] at (0.297,-0.297) {6};
				\node[num] at (-0.297,0.297) {7};
				\node[num] at (0.297,0.297) {8};
			\end{scope}
			\begin{scope}[xshift=0.44\textwidth+1.46cm]
				\clip (-1.10,-1.10) rectangle (1.10,1.10);
				\draw[lat] (-3.5,-2) -- (-3.5,2);
				\draw[lat] (-2,-3.5) -- (2,-3.5);
				\draw[lat] (-2.5,-2) -- (-2.5,2);
				\draw[lat] (-2,-2.5) -- (2,-2.5);
				\draw[lat] (-1.5,-2) -- (-1.5,2);
				\draw[lat] (-2,-1.5) -- (2,-1.5);
				\draw[lat] (-0.5,-2) -- (-0.5,2);
				\draw[lat] (-2,-0.5) -- (2,-0.5);
				\draw[lat] (0.5,-2) -- (0.5,2);
				\draw[lat] (-2,0.5) -- (2,0.5);
				\draw[lat] (1.5,-2) -- (1.5,2);
				\draw[lat] (-2,1.5) -- (2,1.5);
				\draw[lat] (2.5,-2) -- (2.5,2);
				\draw[lat] (-2,2.5) -- (2,2.5);
				\draw[lat] (-2.0,-2.0) -- (-2.5,-2.5);
				\draw[lat] (-2.0,-2.0) -- (-1.5,-2.5);
				\draw[lat] (-2.0,-2.0) -- (-1.5,-1.5);
				\draw[lat] (-2.0,-2.0) -- (-2.5,-1.5);
				\draw[lat] (-2.0,-1.0) -- (-2.5,-1.5);
				\draw[lat] (-2.0,-1.0) -- (-1.5,-1.5);
				\draw[lat] (-2.0,-1.0) -- (-1.5,-0.5);
				\draw[lat] (-2.0,-1.0) -- (-2.5,-0.5);
				\draw[lat] (-2.0,0.0) -- (-2.5,-0.5);
				\draw[lat] (-2.0,0.0) -- (-1.5,-0.5);
				\draw[lat] (-2.0,0.0) -- (-1.5,0.5);
				\draw[lat] (-2.0,0.0) -- (-2.5,0.5);
				\draw[lat] (-2.0,1.0) -- (-2.5,0.5);
				\draw[lat] (-2.0,1.0) -- (-1.5,0.5);
				\draw[lat] (-2.0,1.0) -- (-1.5,1.5);
				\draw[lat] (-2.0,1.0) -- (-2.5,1.5);
				\draw[lat] (-2.0,2.0) -- (-2.5,1.5);
				\draw[lat] (-2.0,2.0) -- (-1.5,1.5);
				\draw[lat] (-2.0,2.0) -- (-1.5,2.5);
				\draw[lat] (-2.0,2.0) -- (-2.5,2.5);
				\draw[lat] (-1.0,-2.0) -- (-1.5,-2.5);
				\draw[lat] (-1.0,-2.0) -- (-0.5,-2.5);
				\draw[lat] (-1.0,-2.0) -- (-0.5,-1.5);
				\draw[lat] (-1.0,-2.0) -- (-1.5,-1.5);
				\draw[lat] (-1.0,-1.0) -- (-1.5,-1.5);
				\draw[lat] (-1.0,-1.0) -- (-0.5,-1.5);
				\draw[lat] (-1.0,-1.0) -- (-0.5,-0.5);
				\draw[lat] (-1.0,-1.0) -- (-1.5,-0.5);
				\draw[lat] (-1.0,0.0) -- (-1.5,-0.5);
				\draw[lat] (-1.0,0.0) -- (-0.5,-0.5);
				\draw[lat] (-1.0,0.0) -- (-0.5,0.5);
				\draw[lat] (-1.0,0.0) -- (-1.5,0.5);
				\draw[lat] (-1.0,1.0) -- (-1.5,0.5);
				\draw[lat] (-1.0,1.0) -- (-0.5,0.5);
				\draw[lat] (-1.0,1.0) -- (-0.5,1.5);
				\draw[lat] (-1.0,1.0) -- (-1.5,1.5);
				\draw[lat] (-1.0,2.0) -- (-1.5,1.5);
				\draw[lat] (-1.0,2.0) -- (-0.5,1.5);
				\draw[lat] (-1.0,2.0) -- (-0.5,2.5);
				\draw[lat] (-1.0,2.0) -- (-1.5,2.5);
				\draw[lat] (0.0,-2.0) -- (-0.5,-2.5);
				\draw[lat] (0.0,-2.0) -- (0.5,-2.5);
				\draw[lat] (0.0,-2.0) -- (0.5,-1.5);
				\draw[lat] (0.0,-2.0) -- (-0.5,-1.5);
				\draw[lat] (0.0,-1.0) -- (-0.5,-1.5);
				\draw[lat] (0.0,-1.0) -- (0.5,-1.5);
				\draw[lat] (0.0,-1.0) -- (0.5,-0.5);
				\draw[lat] (0.0,-1.0) -- (-0.5,-0.5);
				\draw[lat] (0.0,0.0) -- (-0.5,-0.5);
				\draw[lat] (0.0,0.0) -- (0.5,-0.5);
				\draw[lat] (0.0,0.0) -- (0.5,0.5);
				\draw[lat] (0.0,0.0) -- (-0.5,0.5);
				\draw[lat] (0.0,1.0) -- (-0.5,0.5);
				\draw[lat] (0.0,1.0) -- (0.5,0.5);
				\draw[lat] (0.0,1.0) -- (0.5,1.5);
				\draw[lat] (0.0,1.0) -- (-0.5,1.5);
				\draw[lat] (0.0,2.0) -- (-0.5,1.5);
				\draw[lat] (0.0,2.0) -- (0.5,1.5);
				\draw[lat] (0.0,2.0) -- (0.5,2.5);
				\draw[lat] (0.0,2.0) -- (-0.5,2.5);
				\draw[lat] (1.0,-2.0) -- (0.5,-2.5);
				\draw[lat] (1.0,-2.0) -- (1.5,-2.5);
				\draw[lat] (1.0,-2.0) -- (1.5,-1.5);
				\draw[lat] (1.0,-2.0) -- (0.5,-1.5);
				\draw[lat] (1.0,-1.0) -- (0.5,-1.5);
				\draw[lat] (1.0,-1.0) -- (1.5,-1.5);
				\draw[lat] (1.0,-1.0) -- (1.5,-0.5);
				\draw[lat] (1.0,-1.0) -- (0.5,-0.5);
				\draw[lat] (1.0,0.0) -- (0.5,-0.5);
				\draw[lat] (1.0,0.0) -- (1.5,-0.5);
				\draw[lat] (1.0,0.0) -- (1.5,0.5);
				\draw[lat] (1.0,0.0) -- (0.5,0.5);
				\draw[lat] (1.0,1.0) -- (0.5,0.5);
				\draw[lat] (1.0,1.0) -- (1.5,0.5);
				\draw[lat] (1.0,1.0) -- (1.5,1.5);
				\draw[lat] (1.0,1.0) -- (0.5,1.5);
				\draw[lat] (1.0,2.0) -- (0.5,1.5);
				\draw[lat] (1.0,2.0) -- (1.5,1.5);
				\draw[lat] (1.0,2.0) -- (1.5,2.5);
				\draw[lat] (1.0,2.0) -- (0.5,2.5);
				\draw[lat] (2.0,-2.0) -- (1.5,-2.5);
				\draw[lat] (2.0,-2.0) -- (2.5,-2.5);
				\draw[lat] (2.0,-2.0) -- (2.5,-1.5);
				\draw[lat] (2.0,-2.0) -- (1.5,-1.5);
				\draw[lat] (2.0,-1.0) -- (1.5,-1.5);
				\draw[lat] (2.0,-1.0) -- (2.5,-1.5);
				\draw[lat] (2.0,-1.0) -- (2.5,-0.5);
				\draw[lat] (2.0,-1.0) -- (1.5,-0.5);
				\draw[lat] (2.0,0.0) -- (1.5,-0.5);
				\draw[lat] (2.0,0.0) -- (2.5,-0.5);
				\draw[lat] (2.0,0.0) -- (2.5,0.5);
				\draw[lat] (2.0,0.0) -- (1.5,0.5);
				\draw[lat] (2.0,1.0) -- (1.5,0.5);
				\draw[lat] (2.0,1.0) -- (2.5,0.5);
				\draw[lat] (2.0,1.0) -- (2.5,1.5);
				\draw[lat] (2.0,1.0) -- (1.5,1.5);
				\draw[lat] (2.0,2.0) -- (1.5,1.5);
				\draw[lat] (2.0,2.0) -- (2.5,1.5);
				\draw[lat] (2.0,2.0) -- (2.5,2.5);
				\draw[lat] (2.0,2.0) -- (1.5,2.5);
				\node[small] at (-1.0,-1.0) {};
				\node[small] at (-1.0,0.0) {};
				\node[small] at (-1.0,1.0) {};
				\node[small] at (0.0,-1.0) {};
				\node[small] at (-0.5,-0.5) {};
				\node[small] at (0.0,0.0) {};
				\node[small] at (-0.5,0.5) {};
				\node[small] at (0.0,1.0) {};
				\node[small] at (1.0,-1.0) {};
				\node[small] at (0.5,-0.5) {};
				\node[small] at (1.0,0.0) {};
				\node[small] at (0.5,0.5) {};
				\node[small] at (1.0,1.0) {};
				\draw[ed2] (0.120,-0.120) -- (0.387,-0.387);
				\draw[ed2] (-0.120,0.120) -- (-0.387,0.387);
				\draw[ed2] (-0.120,-0.120) -- (-0.387,-0.387);
				\draw[ed2] (0.120,0.120) -- (0.387,0.387);
				\node[big] at (0,0) {};
				\node[num2] at (0.283,-0.283) {1};
				\node[num2] at (-0.283,0.283) {2};
				\node[num2] at (-0.283,-0.283) {3};
				\node[num2] at (0.283,0.283) {4};
			\end{scope}
		\end{tikzpicture}
		\caption{Tetrakis square-lattice ranges after $200{,}000$ steps, colored by
			first-visit time, with the corresponding cyclic orders below.  The clockwise
			order, left, gives radius $178$; the order on the right gives radius $1782$.
			The ranges are scaled separately, and the red square frames the left range.}
		\label{fig:tetrakis-orders}
	\end{figure}
	
	\bibliographystyle{plainnat}
	\bibliography{references}

\begin{thebibliography}{48}
\providecommand{\natexlab}[1]{#1}
\providecommand{\url}[1]{\texttt{#1}}
\expandafter\ifx\csname urlstyle\endcsname\relax
  \providecommand{\doi}[1]{doi: #1}\else
  \providecommand{\doi}{doi: \begingroup \urlstyle{rm}\Url}\fi

\bibitem[Angel and Holroyd(2011)]{AngelHolroyd2011}
Omer Angel and Alexander~E. Holroyd.
\newblock Rotor walks on general trees.
\newblock \emph{SIAM J. Discrete Math.}, 25\penalty0 (1):\penalty0 423--446,
  2011.
\newblock \doi{10.1137/100814299}.
\newblock URL \url{https://doi.org/10.1137/100814299}.

\bibitem[Angel and Holroyd(2012)]{AngelHolroyd2012}
Omer Angel and Alexander~E. Holroyd.
\newblock Recurrent rotor-router configurations.
\newblock \emph{J. Comb.}, 3\penalty0 (2):\penalty0 185--194, 2012.
\newblock \doi{10.4310/JOC.2012.v3.n2.a3}.
\newblock URL \url{https://doi.org/10.4310/JOC.2012.v3.n2.a3}.

\bibitem[Bak et~al.(1987)Bak, Tang, and Wiesenfeld]{BakTangWiesenfeld1987}
Per Bak, Chao Tang, and Kurt Wiesenfeld.
\newblock Self-organized criticality: an explanation of the $1/f$ noise.
\newblock \emph{Phys. Rev. Lett.}, 59\penalty0 (4):\penalty0 381--384, 1987.
\newblock \doi{10.1103/PhysRevLett.59.381}.
\newblock URL \url{https://doi.org/10.1103/PhysRevLett.59.381}.

\bibitem[Benjamini and Schramm(1996)]{BenjaminiSchramm1996}
Itai Benjamini and Oded Schramm.
\newblock Percolation beyond {$\mathbb{Z}^d$}, many questions and a few
  answers.
\newblock \emph{Electron. Commun. Probab.}, 1:\penalty0 71--82, 1996.
\newblock \doi{10.1214/ECP.v1-978}.
\newblock URL \url{https://doi.org/10.1214/ECP.v1-978}.

\bibitem[Bou-Rabee(2021)]{BouRabee2021}
Ahmed Bou-Rabee.
\newblock Convergence of the random {A}belian sandpile.
\newblock \emph{Ann. Probab.}, 49\penalty0 (6):\penalty0 3168--3196, 2021.
\newblock \doi{10.1214/21-AOP1528}.
\newblock URL \url{https://doi.org/10.1214/21-AOP1528}.

\bibitem[Bou-Rabee(2024{\natexlab{a}})]{BouRabee2024FLattice}
Ahmed Bou-Rabee.
\newblock Integer superharmonic matrices on the {F}-lattice.
\newblock \emph{Adv. Math.}, 436:\penalty0 Paper No. 109400,
  2024{\natexlab{a}}.
\newblock \doi{10.1016/j.aim.2023.109400}.
\newblock URL \url{https://doi.org/10.1016/j.aim.2023.109400}.

\bibitem[Bou-Rabee(2024{\natexlab{b}})]{BouRabee2024Shape}
Ahmed Bou-Rabee.
\newblock A shape theorem for exploding sandpiles.
\newblock \emph{Ann. Appl. Probab.}, 34\penalty0 (1A):\penalty0 714--742,
  2024{\natexlab{b}}.
\newblock \doi{10.1214/23-AAP1976}.
\newblock URL \url{https://doi.org/10.1214/23-AAP1976}.

\bibitem[Chan(2019)]{Chan2019}
Swee~Hong Chan.
\newblock Rotor walks on transient graphs and the wired spanning forest.
\newblock \emph{SIAM J. Discrete Math.}, 33\penalty0 (4):\penalty0 2369--2393,
  2019.
\newblock \doi{10.1137/18M1217139}.
\newblock URL \url{https://doi.org/10.1137/18M1217139}.

\bibitem[Chan(2020)]{Chan2020}
Swee~Hong Chan.
\newblock A rotor configuration with maximum escape rate.
\newblock \emph{Electron. Commun. Probab.}, 25:\penalty0 Paper No. 19, 2020.
\newblock \doi{10.1214/20-ECP298}.
\newblock URL \url{https://doi.org/10.1214/20-ECP298}.

\bibitem[Chan(2023)]{Chan2023}
Swee~Hong Chan.
\newblock Recurrence of horizontal--vertical walks.
\newblock \emph{Ann. Inst. Henri Poincar\'e Probab. Stat.}, 59\penalty0
  (2):\penalty0 578--605, 2023.
\newblock \doi{10.1214/22-AIHP1277}.
\newblock URL \url{https://doi.org/10.1214/22-AIHP1277}.

\bibitem[Chan et~al.(2021)Chan, Greco, Levine, and Li]{ChanGrecoLevineLi2021}
Swee~Hong Chan, Lila Greco, Lionel Levine, and Peter Li.
\newblock Random walks with local memory.
\newblock \emph{J. Stat. Phys.}, 184\penalty0 (1):\penalty0 Paper No. 6, 2021.
\newblock \doi{10.1007/s10955-021-02791-5}.
\newblock URL \url{https://doi.org/10.1007/s10955-021-02791-5}.

\bibitem[Chen and Kudler-Flam(2020)]{ChenKudlerFlam2020}
Joe~P. Chen and Jonah Kudler-Flam.
\newblock Laplacian growth and sandpiles on the {S}ierpi{\'n}ski gasket: limit
  shape universality and exact solutions.
\newblock \emph{Ann. Inst. Henri Poincar\'e D}, 7\penalty0 (4):\penalty0
  585--664, 2020.
\newblock \doi{10.4171/AIHPD/95}.
\newblock URL \url{https://doi.org/10.4171/AIHPD/95}.

\bibitem[Chen et~al.(2020)Chen, Huss, Sava-Huss, and
  Teplyaev]{ChenHussSavaHussTeplyaev2020}
Joe~P. Chen, Wilfried Huss, Ecaterina Sava-Huss, and Alexander Teplyaev.
\newblock Internal {DLA} on {S}ierpi{\'n}ski gasket graphs.
\newblock In Matthias Keller, Daniel Lenz, and Radoslaw~K. Wojciechowski,
  editors, \emph{Analysis and Geometry on Graphs and Manifolds}, volume 461 of
  \emph{London Math. Soc. Lecture Note Ser.}, pages 126--155. Cambridge Univ.
  Press, 2020.
\newblock \doi{10.1017/9781108615259.008}.
\newblock URL \url{https://doi.org/10.1017/9781108615259.008}.

\bibitem[Cooper and Spencer(2006)]{CooperSpencer2006}
Joshua~N. Cooper and Joel Spencer.
\newblock Simulating a random walk with constant error.
\newblock \emph{Combin. Probab. Comput.}, 15\penalty0 (6):\penalty0 815--822,
  2006.
\newblock \doi{10.1017/S0963548306007565}.
\newblock URL \url{https://doi.org/10.1017/S0963548306007565}.

\bibitem[Coupier et~al.(2024)Coupier, Henry, Jahnel, and
  K{\"o}ppl]{CoupierHenryJahnelKoppl2024}
David Coupier, Beno{\^\i}t Henry, Benedikt Jahnel, and Jonas K{\"o}ppl.
\newblock The planar lattice two-neighbor graph percolates.
\newblock To appear in Ann. Probab., 2024.
\newblock URL \url{https://doi.org/10.48550/arXiv.2412.20781}.

\bibitem[Dhar(1990)]{Dhar1990}
Deepak Dhar.
\newblock Self-organized critical state of sandpile automaton models.
\newblock \emph{Phys. Rev. Lett.}, 64\penalty0 (14):\penalty0 1613--1616, 1990.
\newblock \doi{10.1103/PhysRevLett.64.1613}.
\newblock URL \url{https://doi.org/10.1103/PhysRevLett.64.1613}.

\bibitem[Dvoretzky and Erd\H{o}s(1951)]{DvoretzkyErdos1951}
Aryeh Dvoretzky and Paul Erd\H{o}s.
\newblock Some problems on random walk in space.
\newblock In \emph{Proceedings of the Second Berkeley Symposium on Mathematical
  Statistics and Probability}, pages 353--367. University of California Press,
  Berkeley, 1951.
\newblock \doi{10.1525/9780520411586-026}.
\newblock URL \url{https://doi.org/10.1525/9780520411586-026}.

\bibitem[Florescu et~al.(2014)Florescu, Ganguly, Levine, and
  Peres]{FlorescuGangulyLevinePeres2014}
Laura Florescu, Shirshendu Ganguly, Lionel Levine, and Yuval Peres.
\newblock Escape rates for rotor walks in {$\mathbb{Z}^d$}.
\newblock \emph{SIAM J. Discrete Math.}, 28\penalty0 (1):\penalty0 323--334,
  2014.
\newblock \doi{10.1137/130908646}.
\newblock URL \url{https://doi.org/10.1137/130908646}.

\bibitem[Florescu et~al.(2016)Florescu, Levine, and
  Peres]{FlorescuLevinePeres2016}
Laura Florescu, Lionel Levine, and Yuval Peres.
\newblock The range of a rotor walk.
\newblock \emph{Amer. Math. Monthly}, 123\penalty0 (7):\penalty0 627--642,
  2016.
\newblock \doi{10.4169/amer.math.monthly.123.7.627}.
\newblock URL \url{https://doi.org/10.4169/amer.math.monthly.123.7.627}.

\bibitem[Grimmett(1989)]{Grimmett1989}
Geoffrey Grimmett.
\newblock \emph{Percolation}.
\newblock Springer, New York, 1989.
\newblock \doi{10.1007/978-1-4757-4208-4}.
\newblock URL \url{https://doi.org/10.1007/978-1-4757-4208-4}.

\bibitem[Holroyd and Propp(2010)]{HolroydPropp2010}
Alexander~E. Holroyd and James Propp.
\newblock Rotor walks and {M}arkov chains.
\newblock In \emph{Algorithmic Probability and Combinatorics}, volume 520 of
  \emph{Contemp. Math.}, pages 105--126. Amer. Math. Soc., Providence, RI,
  2010.
\newblock \doi{10.1090/conm/520/10256}.
\newblock URL \url{https://doi.org/10.1090/conm/520/10256}.

\bibitem[Holroyd et~al.(2008)Holroyd, Levine, M{\'e}sz{\'a}ros, Peres, Propp,
  and Wilson]{HolroydLevineMeszarosPeresProppWilson2008}
Alexander~E. Holroyd, Lionel Levine, Karola M{\'e}sz{\'a}ros, Yuval Peres,
  James Propp, and David~B. Wilson.
\newblock Chip-firing and rotor-routing on directed graphs.
\newblock In \emph{In and Out of Equilibrium 2}, volume~60 of \emph{Progress in
  Probability}, pages 331--364. Birkh\"auser, Basel, 2008.
\newblock \doi{10.1007/978-3-7643-8786-0_17}.
\newblock URL \url{https://doi.org/10.1007/978-3-7643-8786-0_17}.

\bibitem[Huss and Sava(2011)]{HussSava2011}
Wilfried Huss and Ecaterina Sava.
\newblock Rotor-router aggregation on the comb.
\newblock \emph{Electron. J. Combin.}, 18\penalty0 (1):\penalty0 Paper 224,
  2011.
\newblock \doi{10.37236/711}.
\newblock URL \url{https://doi.org/10.37236/711}.

\bibitem[Huss and Sava(2012)]{HussSava2012}
Wilfried Huss and Ecaterina Sava.
\newblock Transience and recurrence of rotor-router walks on directed covers of
  graphs.
\newblock \emph{Electron. Commun. Probab.}, 17:\penalty0 Paper No. 41, 2012.
\newblock \doi{10.1214/ECP.v17-2096}.
\newblock URL \url{https://doi.org/10.1214/ECP.v17-2096}.
\newblock Erratum: \emph{Electron. Commun. Probab.} \textbf{19} (2014), 1--6,
  \url{https://doi.org/10.1214/ECP.v19-3848}.

\bibitem[Huss and Sava-Huss(2020{\natexlab{a}})]{HussSavaHuss2020Periodic}
Wilfried Huss and Ecaterina Sava-Huss.
\newblock A law of large numbers for the range of rotor walks on periodic
  trees.
\newblock \emph{Markov Process. Related Fields}, 26\penalty0 (3):\penalty0
  467--485, 2020{\natexlab{a}}.
\newblock URL \url{https://math-mprf.org/journal/articles/id1582/}.

\bibitem[Huss and Sava-Huss(2020{\natexlab{b}})]{HussSavaHuss2020Range}
Wilfried Huss and Ecaterina Sava-Huss.
\newblock Range and speed of rotor walks on trees.
\newblock \emph{J. Theoret. Probab.}, 33\penalty0 (3):\penalty0 1657--1690,
  2020{\natexlab{b}}.
\newblock \doi{10.1007/s10959-019-00904-1}.
\newblock URL \url{https://doi.org/10.1007/s10959-019-00904-1}.

\bibitem[Huss et~al.(2015)Huss, M{\"u}ller, and
  Sava-Huss]{HussMullerSavaHuss2015}
Wilfried Huss, Sebastian M{\"u}ller, and Ecaterina Sava-Huss.
\newblock Rotor-routing on {G}alton--{W}atson trees.
\newblock \emph{Electron. Commun. Probab.}, 20:\penalty0 Paper No. 49, 2015.
\newblock \doi{10.1214/ECP.v20-4000}.
\newblock URL \url{https://doi.org/10.1214/ECP.v20-4000}.

\bibitem[Huss et~al.(2018)Huss, Levine, and Sava-Huss]{HussLevineSavaHuss2018}
Wilfried Huss, Lionel Levine, and Ecaterina Sava-Huss.
\newblock Interpolating between random walk and rotor walk.
\newblock \emph{Random Structures Algorithms}, 52\penalty0 (2):\penalty0
  263--282, 2018.
\newblock \doi{10.1002/rsa.20747}.
\newblock URL \url{https://doi.org/10.1002/rsa.20747}.

\bibitem[Kager and Levine(2010)]{KagerLevine2010}
Wouter Kager and Lionel Levine.
\newblock Rotor-router aggregation on the layered square lattice.
\newblock \emph{Electron. J. Combin.}, 17\penalty0 (1):\penalty0 Paper No.
  R152, 16, 2010.
\newblock \doi{10.37236/424}.
\newblock URL \url{https://doi.org/10.37236/424}.

\bibitem[Kaiser and Sava-Huss(2024)]{KaiserSavaHuss2024}
Robin Kaiser and Ecaterina Sava-Huss.
\newblock Random rotor walks and i.i.d. sandpiles on {S}ierpi\'nski graphs.
\newblock \emph{Statist. Probab. Lett.}, 209:\penalty0 Paper No. 110090, 2024.
\newblock \doi{10.1016/j.spl.2024.110090}.
\newblock URL \url{https://doi.org/10.1016/j.spl.2024.110090}.

\bibitem[Kapri and Dhar(2009)]{KapriDhar2009}
Rajeev Kapri and Deepak Dhar.
\newblock Asymptotic shape of the region visited by an {E}ulerian walker.
\newblock \emph{Phys. Rev. E}, 80\penalty0 (5):\penalty0 051118, 2009.
\newblock \doi{10.1103/PhysRevE.80.051118}.
\newblock URL \url{https://doi.org/10.1103/PhysRevE.80.051118}.

\bibitem[Kesten(1980)]{Kesten1980}
Harry Kesten.
\newblock The critical probability of bond percolation on the square lattice
  equals {$1/2$}.
\newblock \emph{Comm. Math. Phys.}, 74\penalty0 (1):\penalty0 41--59, 1980.
\newblock \doi{10.1007/BF01197577}.
\newblock URL \url{https://doi.org/10.1007/BF01197577}.

\bibitem[Kingman(1968)]{Kingman1968}
John F.~C. Kingman.
\newblock The ergodic theory of subadditive stochastic processes.
\newblock \emph{J. Roy. Statist. Soc. Ser. B}, 30\penalty0 (3):\penalty0
  499--510, 1968.
\newblock \doi{10.1111/j.2517-6161.1968.tb00749.x}.
\newblock URL \url{https://doi.org/10.1111/j.2517-6161.1968.tb00749.x}.

\bibitem[Landau and Levine(2009)]{LandauLevine2009}
Itamar Landau and Lionel Levine.
\newblock The rotor-router model on regular trees.
\newblock \emph{J. Combin. Theory Ser. A}, 116\penalty0 (2):\penalty0 421--433,
  2009.
\newblock \doi{10.1016/j.jcta.2008.05.012}.
\newblock URL \url{https://doi.org/10.1016/j.jcta.2008.05.012}.

\bibitem[Lawler et~al.(1992)Lawler, Bramson, and
  Griffeath]{LawlerBramsonGriffeath1992}
Gregory~F. Lawler, Maury Bramson, and David Griffeath.
\newblock Internal diffusion limited aggregation.
\newblock \emph{Ann. Probab.}, 20\penalty0 (4):\penalty0 2117--2140, 1992.
\newblock \doi{10.1214/aop/1176989542}.
\newblock URL \url{https://doi.org/10.1214/aop/1176989542}.

\bibitem[Levine and Peres(2009)]{LevinePeres2009}
Lionel Levine and Yuval Peres.
\newblock Strong spherical asymptotics for rotor-router aggregation and the
  divisible sandpile.
\newblock \emph{Potential Anal.}, 30\penalty0 (1):\penalty0 1--27, 2009.
\newblock \doi{10.1007/s11118-008-9104-6}.
\newblock URL \url{https://doi.org/10.1007/s11118-008-9104-6}.

\bibitem[Levine and Peres(2017)]{LevinePeres2017}
Lionel Levine and Yuval Peres.
\newblock Laplacian growth, sandpiles, and scaling limits.
\newblock \emph{Bull. Amer. Math. Soc. (N.S.)}, 54\penalty0 (3):\penalty0
  355--382, 2017.
\newblock \doi{10.1090/bull/1573}.
\newblock URL \url{https://doi.org/10.1090/bull/1573}.

\bibitem[Levine et~al.(2016)Levine, Pegden, and Smart]{LevinePegdenSmart2016}
Lionel Levine, Wesley Pegden, and Charles~K. Smart.
\newblock Apollonian structure in the {A}belian sandpile.
\newblock \emph{Geom. Funct. Anal.}, 26\penalty0 (1):\penalty0 306--336, 2016.
\newblock \doi{10.1007/s00039-016-0358-7}.
\newblock URL \url{https://doi.org/10.1007/s00039-016-0358-7}.

\bibitem[Levine et~al.(2017)Levine, Pegden, and Smart]{LevinePegdenSmart2017}
Lionel Levine, Wesley Pegden, and Charles~K. Smart.
\newblock The {A}pollonian structure of integer superharmonic matrices.
\newblock \emph{Ann. of Math. (2)}, 186\penalty0 (1):\penalty0 1--67, 2017.
\newblock \doi{10.4007/annals.2017.186.1.1}.
\newblock URL \url{https://doi.org/10.4007/annals.2017.186.1.1}.

\bibitem[Li(2021)]{Li2021}
Linjun Li.
\newblock On the {M}anhattan pinball problem.
\newblock \emph{Electron. Commun. Probab.}, 26:\penalty0 Paper No. 25, 2021.
\newblock \doi{10.1214/21-ECP394}.
\newblock URL \url{https://doi.org/10.1214/21-ECP394}.

\bibitem[Liggett et~al.(1997)Liggett, Schonmann, and
  Stacey]{LiggettSchonmannStacey1997}
Thomas~M. Liggett, Roberto~H. Schonmann, and Alan~M. Stacey.
\newblock Domination by product measures.
\newblock \emph{Ann. Probab.}, 25\penalty0 (1):\penalty0 71--95, 1997.
\newblock \doi{10.1214/aop/1024404279}.
\newblock URL \url{https://doi.org/10.1214/aop/1024404279}.

\bibitem[Pegden and Smart(2013)]{PegdenSmart2013}
Wesley Pegden and Charles~K. Smart.
\newblock Convergence of the {A}belian sandpile.
\newblock \emph{Duke Math. J.}, 162\penalty0 (4):\penalty0 627--642, 2013.
\newblock \doi{10.1215/00127094-2079677}.
\newblock URL \url{https://doi.org/10.1215/00127094-2079677}.

\bibitem[Povolotsky et~al.(1998)Povolotsky, Priezzhev, and
  Shcherbakov]{PovolotskyPriezzhevShcherbakov1998}
Alexander~M. Povolotsky, Vyacheslav~B. Priezzhev, and R.~R. Shcherbakov.
\newblock Dynamics of {E}ulerian walkers.
\newblock \emph{Phys. Rev. E}, 58\penalty0 (5):\penalty0 5449--5454, 1998.
\newblock \doi{10.1103/PhysRevE.58.5449}.
\newblock URL \url{https://doi.org/10.1103/PhysRevE.58.5449}.

\bibitem[Priezzhev et~al.(1996)Priezzhev, Dhar, Dhar, and
  Krishnamurthy]{PriezzhevDharDharKrishnamurthy1996}
Vyacheslav~B. Priezzhev, Deepak Dhar, Abhishek Dhar, and Supriya Krishnamurthy.
\newblock {E}ulerian walkers as a model of self-organized criticality.
\newblock \emph{Phys. Rev. Lett.}, 77\penalty0 (25):\penalty0 5079--5082, 1996.
\newblock \doi{10.1103/PhysRevLett.77.5079}.
\newblock URL \url{https://doi.org/10.1103/PhysRevLett.77.5079}.

\bibitem[Propp(2003)]{Propp2003}
James Propp.
\newblock Random walk and random aggregation, derandomized.
\newblock Lecture, Microsoft Research, 2003.
\newblock URL
  \url{https://www.microsoft.com/en-us/research/video/random-walk-and-random-aggregation-derandomized/}.

\bibitem[Propp(2010)]{Propp2010}
James Propp.
\newblock Discrete analog computing with rotor-routers.
\newblock \emph{Chaos}, 20\penalty0 (3):\penalty0 037110, 2010.
\newblock \doi{10.1063/1.3489886}.
\newblock URL \url{https://doi.org/10.1063/1.3489886}.

\bibitem[Rabani et~al.(1998)Rabani, Sinclair, and
  Wanka]{RabaniSinclairWanka1998}
Yuval Rabani, Alistair Sinclair, and Rolf Wanka.
\newblock Local divergence of {M}arkov chains and the analysis of iterative
  load-balancing schemes.
\newblock In \emph{39th Annual Symposium on Foundations of Computer Science},
  pages 694--703. IEEE Computer Society, 1998.
\newblock \doi{10.1109/SFCS.1998.743520}.
\newblock URL \url{https://doi.org/10.1109/SFCS.1998.743520}.

\bibitem[Wagner et~al.(1996)Wagner, Lindenbaum, and
  Bruckstein]{WagnerLindenbaumBruckstein1996}
Israel~A. Wagner, Michael Lindenbaum, and Alfred~M. Bruckstein.
\newblock Smell as a computational resource: a lesson we can learn from the
  ant.
\newblock In \emph{Fourth Israel Symposium on Theory of Computing and Systems
  {(ISTCS '96)}}, pages 219--230. IEEE Computer Society, 1996.
\newblock URL
  \url{https://cris.technion.ac.il/en/publications/smell-as-a-computational-resource-a-lesson-we-can-learn-from-the-}.

\end{thebibliography}
	
\end{document}